\documentclass[10pt,english,a4paper]{article}

\usepackage[utf8]{inputenc}
\usepackage[T1]{fontenc}
\usepackage[english]{babel}
\usepackage[charter]{mathdesign}
\usepackage[mathscr]{euscript}
\usepackage{amsmath,amsthm}

\usepackage{enumitem}
\usepackage[explicit]{titlesec}
\usepackage{twoopt}
\usepackage{hyperref}
\usepackage{microtype} 

\usepackage{tikz,tikz-cd}
\usetikzlibrary{arrows}
\usetikzlibrary{matrix}


\hypersetup{
    colorlinks,
    linkcolor={red!50!black},
    citecolor={blue!50!black},
    urlcolor={blue!60!black}
}


\DeclareMathOperator{\oboperator}{Ob}
\newcommand{\ob}[1]{\oboperator{#1}}
\newcommand{\op}[1]{{#1}^{\circ}}

\newcommand{\slice}[2]{#1_{\kern-2pt/\kern-.5pt#2}}
\newcommand{\coslice}[2]{\vphantom{#1}_{#2\kern-.5pt\backslash\kern-2pt}#1}

\newcommand{\id}[1]{\boldsymbol{1}_{#1}}

\newcommand{\fiber}[2]{{#1}_{#2}}
\newcommand{\fiberprod}[3]{{#1}\times_{#3}{#2}}

\DeclareMathOperator*{\limoperator}{lim}
\renewcommand{\lim}[2][\null]{\limoperator_{#1}\left( #2 \right)}

\newcommand{\limend}[1]{\int_{#1}}
\newcommand{\colimend}[1]{\int^{#1}}
\newcommand{\tens}{\mathbin \odot}
\newcommand{\cotens}{\mathbin \pitchfork}

\newcommand{\adjointarrows}{\rightleftarrows}

\DeclareMathOperator{\homoperator}{Hom}
\renewcommand{\hom}[3][\null]{%
        \ifx#1\null%
                \homoperator\left(#2,#3\right)%
        \else #1\left(#2,#3\right) \fi}
\DeclareMathOperator{\Homoperator}{\underline{Hom}}

\newcommand{\functorcat}[3][\null]{\ifx#1\null[{#2},{#3}]
                                    \else\Homoperator(#2,#3) \fi}

\newcommand{\dom}{\mathrm{dom}}
\newcommand{\cod}{\mathrm{cod}}


\newcommand{\cart}[3][\null]{\rho^{#1}_{#2,#3}}
\newcommand{\cartz}{\rho}

\newcommand{\cocart}[3][\null]{\lambda^{#1}_{#2,#3}}
\newcommand{\cocartz}{\lambda}
\newcommand{\push}[2]{{#1}_!#2}
\newcommand{\pull}[2]{{#1}^\ast#2}
\newcommand{\pushfact}[1]{{#1}_{\triangleright}}
\newcommand{\pullfact}[1]{{#1}^{\triangleleft}}
\newcommand{\middlefact}[3]{\vphantom{#1}_{#2}{#1}^{#3}}
\newcommand{\bigluing}[1]{\operatorname{\mathcal G\kern -.1em\ell}\left(#1\right)}

\newcommand{\restr}[1]{{#1}^\ast}

\newcommand{\cat}[1]{\mathscr{#1}}
\newcommand{\concrete}[1]{\mathsf{#1}}
\newcommand{\Set}{\concrete{Set}}

\newcommand{\Cat}{\concrete{Cat}}
\newcommand{\Adj}{\concrete{Adj}}

\newcommand{\lincat}[1]{\boldsymbol{#1}}
\newcommand{\standsimp}[1]{\Delta[#1]}

\newcommand{\Ho}[2][\null]{\mathbf{Ho}_{\rm #1}\left( #2 \right)}
\newcommand{\class}[1]{\mathfrak{#1}}

\newcommand{\weakorth}{\mathbin{%
    \tikz[scale=.2,anchor=base,baseline]{%
      \draw (0,0) -- (1,0) -- (1,1) -- (0,1) -- (0,0) -- (1,1);%
    }%
  }%
}

\newcommand{\deginf}[2]{#1_{#2}}
\DeclareMathOperator{\latchingoperator}{L}

\newcommand{\latching}[1]{\latchingoperator_{#1}}
\DeclareMathOperator{\matchingoperator}{M}

\newcommand{\matching}[1]{\matchingoperator_{#1}}


\DeclareMathOperator{\autoperator}{Aut}
\newcommand{\aut}[1]{\autoperator\left(#1\right)}

\newcommand*\from{:}

\newcommand*\cofrom{:}

\newcommand{\define}[1]{\emph{#1}}
\newcommand{\showcase}[1]{\textbf{#1}}

\newtheorem{theorem}{Theorem}[section]
\newtheorem*{theorem*}{Theorem}
\newtheorem{proposition}[theorem]{Proposition}
\newtheorem{lemma}[theorem]{Lemma}
\newtheorem{corollary}[theorem]{Corollary}

\newtheorem*{claim*}{Claim}
\theoremstyle{definition}
\newtheorem{definition}[theorem]{Definition}

\theoremstyle{remark}
\newtheorem{remark}[theorem]{\sc Remark}

\newtheorem{example}[theorem]{\sc Example(s)}

\newcommand*{\weakcons}{hCon\relax}
\newcommand*{\hbc}{hBC\relax}
\newcommand*{\quillenpushpull}{Q\relax}
\newcommand*{\totalcof}[1]{\class C_{#1}}
\newcommand*{\totalfib}[1]{\class F_{#1}}
\newcommand*{\totalweak}[1]{\class W_{#1}}
\newcommand*{\totalacof}[1]{\totalcof{#1}^{\sim}}
\newcommand*{\totalafib}[1]{\totalfib{#1}^{\sim}}

\newcommand{\Bcategory}{\cat{B}}

\newcommand{\Ccategory}{\cat{C}}

\newcommand{\Ecategory}{\cat{E}}

\newcommand{\Mcategory}{\cat{M}}
\newcommand{\Ncategory}{\cat{N}}
\newcommand{\Ppseudofunctor}{\cat{P}}

\newcommand{\Quillen}{\concrete{Quil}}

\title{\sffamily On bifibrations of model categories}
\author{\normalsize%
  \begin{minipage}{.48\linewidth}\centering
    Pierre Cagne\\
    IRIF, Universit\'e Paris 7\\
    \href{mailto:cagne@irif.fr}%
    {\nolinkurl{cagne@irif.fr}}%
  \end{minipage}
  \begin{minipage}{.48\linewidth}\centering
    Paul-Andr\'e Melli\`es\\
    IRIF, Universit\'e Paris 7\\
    \href{mailto:mellies@irif.fr}%
    {\nolinkurl{mellies@irif.fr}}%
  \end{minipage} 
}
\date{\normalsize Last updated on \today}

\begin{document}

\maketitle

\begin{abstract}
  In this article, we develop a notion of Quillen bifibration
  which combines the two notions of
  Grothendieck bifibration and of Quillen model structure.
  In particular, given a bifibration $p:\cat E\to\cat B$, 
  we describe when a family of model structures
  on the fibers $\cat E_A$ and on the basis category $\cat B$
  combines into a model structure on the total category~$\cat E$, such
  that the functor~$p$ preserves cofibrations, fibrations and weak equivalences.
  Using this Grothendieck construction for model structures,
  we revisit the traditional definition of Reedy model structures,
   and possible generalizations, and exhibit their bifibrational nature.
\end{abstract}

\tableofcontents

\section{Introduction}

In this paper, we investigate how the two notions of
\emph{Grothendieck bifibration} and of \emph{Quillen model category}
may be suitably combined together.
We are specifically interested in the situation of a Grothendieck
bifibration $p\from  \Ecategory\to\Bcategory$ where the basis
category~$\Bcategory$ as well as each fiber $\Ecategory_A$ for an
object~$A$ of the basis category~$\Bcategory$ is equipped with a
Quillen model structure.
Our main purpose will be to identify necessary and sufficient
conditions on the Grothendieck bifibration
$p\from  \Ecategory\to\Bcategory$ to ensure that the total category
$\Ecategory$ inherits a model structure from the model structures
assigned to the basis~$\Bcategory$ and to the fibers~$\Ecategory_A$'s.
We start our inquiry by recalling the fundamental relationship between
bifibrations and adjunctions.
This connection will guide us all along the paper.
Our plan is indeed to proceed by analogy,
and to carve out a notion of \emph{Quillen bifibration}
playing the same role for Grothendieck bifibrations
as the notion of \emph{Quillen adjunction}
plays today for the notion of adjunction.

\medbreak

\paragraph{Grothendieck bifibrations and adjunctions.}
We will generally work with \emph{cloven} bifibrations.
Recall that a \emph{cleavage on a Grothendieck fibration} is a choice,
for every morphism~$u\from A\to B$ and for every object~$Y$ above~$B$,
of a cartesian morphism $\cart u Y\from \pull u Y\to Y$
above~$u$.
Dually, a \emph{cleavage on a Grothendieck opfibration} is a choice,
for every morphism~$u\from A\to B$ and for every object~$X$ above~$A$,
of a left cartesian morphism $\cocart u X \from X\to \push u X$
above~$u$.
%
%
In a cloven Grothendieck fibration, every morphism $u\from A\to B$ in
the basis category~$\Bcategory$ induces a functor
\begin{equation}
  \label{eq:right-cleavage}%
  \begin{tikzcd}[column sep=.6em]
    u^{\ast} & : & \Ecategory_B\arrow[rrrr] &&&& \Ecategory_A
  \end{tikzcd}
\end{equation}
Symmetrically, in a cloven Grothendieck opfibration, every morphism
$u\from A\to B$ in the basis category~$\Bcategory$ induces a functor
\begin{equation}
  \label{eq:left-cleavage}%
  \begin{tikzcd}[column sep=.6em]
    u_{!} & : & \Ecategory_A\arrow[rrrr] &&&& \Ecategory_B
  \end{tikzcd}
\end{equation}
A \emph{cloven bifibration} (or more simply a bifibration) is a left
and right Grothendieck fibration~$p\from \Ecategory\to\Bcategory$
equipped with a cleavage on both sides.

\medbreak

Formulated in this way, a bifibration $p\from \Ecategory\to\Bcategory$
is simply the ``juxtaposition'' of a left and of a right Grothendieck
fibration, with no apparent connection between the two structures.
Hence, a remarkable phenomenon is that the two fibrational structures
are in fact strongly interdependent.
Indeed, it appears that in a bifibration
$p\from \Ecategory\to\Bcategory$, the pair of functors
(\ref{eq:right-cleavage}) and (\ref{eq:left-cleavage}) associated to a
morphism $u\from A\to B$ defines an adjunction between the fiber
categories
\begin{displaymath}
  \begin{tikzcd}[column sep=.8em]
    u_{!} & : & {\Ecategory_A} \arrow[rrrr, yshift=-.5ex] 
    &&&&
    {\Ecategory_B}  \arrow[llll, yshift=.5ex] 
    & : & u^{\ast}
  \end{tikzcd}
\end{displaymath}
where the functor $u_{!}$ is left adjoint to the functor~$\pull u$.
%
%
%
The bond between bifibrations and adjunctions
is even tighter when one looks at it from
the point of view of indexed categories.
%
Recall that a (covariantly) \emph{indexed category} of basis~$\Bcategory$
is defined as a pseudofunctor
\begin{equation}
  \label{eq:pseudofunctor}%
  \begin{tikzcd}[column sep=1em]
    \Ppseudofunctor & : & \Bcategory\arrow[rrrr] &&&& \Cat
  \end{tikzcd}
\end{equation}
where $\Cat$ denotes the 2-category of categories, functors and
natural transformations.
Every cloven Grothendieck opfibration $p\from \Ecategory\to\Bcategory$ induces
an indexed category $\Ppseudofunctor$ which transports every
object~$A$ of the basis~$\Bcategory$ to the fiber category
$\Ecategory_A$, and every morphism $u\from A\to B$ of the basis to the
functor $\push u \from \Ecategory_A\to\Ecategory_B$.
Conversely, the Grothendieck construction enables one to construct a cloven Grothendieck opfibration 
$p\from \Ecategory\to\Bcategory$ from an indexed category $\Ppseudofunctor$.
This back-and-forth translation defines an equivalence of categories
between
\begin{center}
\begin{tabular}{ccc}
\begin{tabular}{c}
the category of 
\\
cloven Grothendieck opfibrations
\\
with basis category~$\Bcategory$
\end{tabular}
& \hspace{1em} $\rightleftharpoons$ \hspace{1.3em} &
\begin{tabular}{c}
the category of 
\\
indexed categories
\\
with basis category~$\Bcategory$
\end{tabular}
\end{tabular}
\end{center}
All this is well-known.
What is a little bit less familiar (possibly) and which matters to us
here is that this correspondence may be adapted to Grothendieck
bifibrations, in the following way.
Consider the 2-category $\Adj$ with categories as objects, with
adjunctions
\begin{equation}
  \label{eq:M-N-adjunction}%
\begin{tikzcd}[column sep=.8em]
L & \from  & {\Mcategory} \arrow[rrrr, yshift=-.5ex] 
&&&&
{\Ncategory}  \arrow[llll, yshift=.5ex] 
& \from  & R
\end{tikzcd}
\end{equation}
as morphisms from $\Mcategory$ to $\Ncategory$,
and with natural transformations
\begin{displaymath}
\begin{tikzcd}[column sep=.8em, row sep=0em]
\theta & \from  & 
{L_1} \arrow[rrrr, double, -implies]
&&&&
{L_2}
& \from  & 
\Mcategory \arrow[rrrr]
&&&&
\Ncategory
\end{tikzcd}
\end{displaymath}
between the left adjoint functors as 2-dimensional cells 
$\theta\from (L_1,R_1)\Rightarrow (L_2,R_2)$.
%
In the same way as we have done earlier,
an \emph{indexed category-with-adjunctions}
with basis category $\Bcategory$ is defined as a pseudofunctor
\begin{equation}
  \label{eq:pseudofunctor-to-adj}%
\begin{tikzcd}[column sep=1em]
\Ppseudofunctor & \from  & \Bcategory\arrow[rrrr] &&&& \Adj
\end{tikzcd}
\end{equation}
For the same reasons as in the case of Grothendieck opfibrations,
there is an equivalence of category between
\begin{center}
\begin{tabular}{ccc}
\begin{tabular}{c}
the category of 
\\
cloven bifibrations
\\
with basis category~$\Bcategory$
\end{tabular}
& \hspace{.5em} $\rightleftharpoons$ \hspace{.8em} &
\begin{tabular}{c}
the category of 
\\
indexed categories-with-adjunctions
\\
with basis category~$\Bcategory$
\end{tabular}
\end{tabular}
\end{center}
From this follows, among other consequences, that a cloven bifibration
$p\from \Ecategory\to\Bcategory$ is the same thing as a cloven right
fibration where the functor
$\pull u \from \Ecategory_B\to\Ecategory_A$ comes equipped with a left
adjoint $\push u \from \Ecategory_A\to\Ecategory_B$ for every
morphism~$u\from A\to B$ of the basis category~$\Bcategory$.

\medbreak

By way of illustration, consider the ordinal category
$\lincat 2$ with two objects $0$ and $1$
and a unique non-identity morphism $u\from 0\to 1$.
By the discussion above, a Grothendieck bifibration $p:\Ecategory\to \Bcategory$ 
on the basis category $\Bcategory=\lincat 2$ is the same thing 
as an adjunction~(\ref{eq:M-N-adjunction}).
The correspondence relies on the observation that every
adjunction~(\ref{eq:M-N-adjunction}) can be turned into a bifibration
$p\from \Ecategory\to \Bcategory$ where the category $\Ecategory$ is
defined as the category of \emph{collage} associated to the
adjunction~$(L,R)$, with fibers $\Ecategory_0=\Mcategory$,
$\Ecategory_1=\Ncategory$ and mediating functors $u^*=R$ and $u_!=L$,
see \cite{street:fib-in-bicat} for the notion of collage.
For that reason, the Grothendieck construction for bifibrations may be
seen as a generalized and fibrational notion of collage.


\paragraph{Model structures and Quillen adjunctions.}
Seen from that angle, the notion of Grothendieck bifibration provides
a fibrational counterpart (and also a far-reaching generalization) of
the fundamental notion of adjunction between categories.
This perspective opens a firm connection with modern homotopy theory,
thanks to the notion of \emph{Quillen adjunction} between model categories.
Recall that a \emph{model structure} on a category~$\Mcategory$
delineates three classes $\class C$, $\class W$, $\class F$ of maps
called \emph{cofibrations}, \emph{weak equivalences} and
\emph{fibrations} respectively ; these classes of maps are moreover
required to satisfy a number of properties recalled in
definition~\ref{def:model-category}.
A fibration or a cofibration which is at the same time a weak
equivalence is called \emph{acyclic}.

\begin{remark}
  \label{rem:about-limits-in-model-categories}%
  By extension, we find sometimes convenient to call \emph{model
    structure} a category~$\Mcategory$ \emph{together} with its model
  structure $(\class C, \class W, \class F)$.
  The appropriate name for that notion would be \emph{model category}
  but the terminology is already used in the literature for a
  \emph{finitely complete} and \emph{finitely cocomplete}
  category~$\Ccategory$ equipped with a model structure
  $(\class C, \class W, \class F)$.
  The extra completeness assumptions play a role in the construction
  of the homotopy category $\Ho{\Ccategory}$, and it is thus
  integrated in the accepted definition of a ``model category''.
  We prefer to work with ``model structures'' for two reasons.
  On the one hand, the construction of $\Ho{\Ccategory}$ can be
  performed using the weaker assumption that the category~$\Ccategory$
  has finite products and finite coproducts, as noticed by
  Egger~\cite{egger:model-cat-no-eq}.
  On the other hand, the extra completeness assumptions are
  independent of the relationship between Grothendieck bifibrations
  and model structures, and may be treated separately.
\end{remark}


We recall below the notions of left and right Quillen functor between
model structures.
\begin{definition}[Quillen functors]
  A functor $F\from \Mcategory\to\Ncategory$ between two model
  structures $\Mcategory$ and $\Ncategory$ is called a \emph{left}
  \emph{Quillen} \emph{functor} when it transports every cofibration
  of~$\Mcategory$ to a cofibration of~$\Ncategory$ and every acyclic
  cofibration of~$\Mcategory$ to an acyclic cofibration
  of~$\Ncategory$.
Dually, a functor $F\from \Mcategory\to\Ncategory$ is called a
\emph{right Quillen functor} when it transports every fibration
of~$\Mcategory$ to a fibration of~$\Ncategory$ and every acyclic
fibration of~$\Mcategory$ to an acyclic fibration of~$\Ncategory$.
A functor $F\from \Mcategory\to\Ncategory$ which is at the same time a
left and a right Quillen functor is called a \emph{Quillen functor}.
\end{definition}
A simple argument shows that a Quillen functor
$F\from \Mcategory\to\Ncategory$ transports every weak equivalence
of~$\Mcategory$ to a weak equivalence of~$\Ncategory$.
For that reason, a Quillen functor is the same thing as a functor
which transports every cofibration, weak equivalence or fibration
$f\from A\to B$ of~$\Mcategory$ to a map $Ff\from FA\to FB$ with the
same status in the model structure of~$\Ncategory$.

\medbreak
The notion of \emph{Quillen adjunction} relies on the following
observation.
\begin{proposition}\label{proposition/LR}
Suppose given an adjunction
\begin{equation}
  \label{eq:M-N-Quillen-adjunction}%
  \begin{tikzcd}[column sep=.8em]
    L & \from  & {\Mcategory} \arrow[rrrr, yshift=-.5ex] 
    &&&&
    {\Ncategory}  \arrow[llll, yshift=.5ex] 
    & \from  & R
  \end{tikzcd}
\end{equation}
between two model categories~$\Mcategory$ and~$\Ncategory$.
The following assertions are equivalent:
\begin{itemize}
\item the left adjoint functor~$L\from \Mcategory\to\Ncategory$ is a
  left Quillen functor,
\item the right adjoint functor~$R\from \Ncategory\to\Mcategory$ is a
  right Quillen functor.
\end{itemize}
\end{proposition}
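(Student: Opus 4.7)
The plan is to reduce the equivalence to the standard characterization of (acyclic) cofibrations and (acyclic) fibrations in terms of lifting properties, and then to exploit the fact that the adjunction $L \adjoint R$ transfers lifting problems back and forth between the two sides.

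The first ingredient I would invoke is the standard consequence of the model category axioms that in either of $\Mcategory$ and $\Ncategory$ the cofibrations are exactly the maps with the left lifting property against the class of acyclic fibrations, and the acyclic cofibrations are exactly those with the left lifting property against all fibrations; dually, the (acyclic) fibrations are characterized by the right lifting property against (acyclic) cofibrations. The second ingredient is the observation that the adjunction~(\ref{eq:M-N-Quillen-adjunction}) sets up, for any $f\from A\to B$ in $\Mcategory$ and any $g\from X\to Y$ in $\Ncategory$, a natural bijection between commutative squares with sides $(Lf,g)$ in $\Ncategory$ and commutative squares with sides $(f,Rg)$ in $\Mcategory$, and that under this bijection diagonal fillers correspond to diagonal fillers. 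As a consequence, $Lf \weakorth g$ holds in $\Ncategory$ if and only if $f \weakorth Rg$ holds in $\Mcategory$.

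With these two ingredients in hand the argument becomes entirely formal. Assuming that $L$ is a left Quillen functor, to show that $R$ preserves (acyclic) fibrations it suffices, by the characterization recalled above, to check that $Rg$ has the right lifting property against every (acyclic) cofibration $f$ of $\Mcategory$, which by the mate transfer is equivalent to the lifting of $Lf$ (an (acyclic) cofibration of $\Ncategory$ by hypothesis) against $g$, true by the model category axioms in $\Ncategory$. Running the same argument in the other direction, starting from the hypothesis that $R$ is right Quillen, shows symmetrically that $L$ is left Quillen.

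I do not anticipate any serious obstacle: the proof is a textbook instance of the mate correspondence applied to lifting properties, and its only mild subtlety is the bookkeeping that matches each of the four preservation clauses (cofibrations, acyclic cofibrations, fibrations, acyclic fibrations) with its dual counterpart across the adjunction. Should the definition of model structure adopted in Definition~\ref{def:model-category} not include the lifting characterization as an axiom, I would need to insert at the very beginning a short reminder that it follows from the factorization and 2-out-of-3 axioms via the retract argument.
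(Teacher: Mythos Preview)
Your argument is correct and is exactly the standard one: use the adjunction to transport lifting problems (so that $Lf \weakorth g$ iff $f \weakorth Rg$) and then invoke the characterisation of (acyclic) cofibrations and (acyclic) fibrations by their lifting properties. The paper itself does not supply a proof of this proposition, treating it as a well-known fact, so there is nothing to compare against; your write-up would serve perfectly well as the omitted justification.
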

\begin{definition}[Quillen adjunctions]
  An adjunction $L \from \cat M \leftrightarrows \cat N \cofrom R$
  between two model categories~$\Mcategory$ and~$\Ncategory$ is called
  a \emph{Quillen adjunction} when the equivalent assertions of
  Prop.~\ref{proposition/LR} hold.
\end{definition}

\paragraph{Quillen bifibrations.}
At this stage, we are ready to introduce the notion of \emph{Quillen
  bifibration} which we will study in the paper.
We start by observing that whenever
the total category~$\Ecategory$ of a functor
$p\from \Ecategory\to\Bcategory$ is equipped with a model structure
$(\class C_{\Ecategory},\class W_{\Ecategory},\class F_{\Ecategory})$,
every fiber~$\Ecategory_A$ associated to an object~$A$ of the basis
category~$\Bcategory$ comes equipped with three classes of maps noted
$\class C_A$, $\class W_A$, $\class F_A$ called cofibrations, weak
equivalences and fibrations above the object~$A$, respectively.
The classes are defined in the expected way:
\begin{displaymath}
  \class C_A = \class C_{\Ecategory} \cap \class{Hom}_A
  \quad\quad
  \class W_A = \class W_{\Ecategory} \cap \class{Hom}_A
  \quad\quad
  \class F_A = \class F_{\Ecategory} \cap \class{Hom}_A
\end{displaymath}
where $\class{Hom}_A$ denotes the class of maps~$f$ of the
category~$\Ecategory$ above the object~$A$, that is, such that
$p(f)=\id A$.
We declare that the model structure
$(\class C_{\Ecategory},\class W_{\Ecategory},\class F_{\Ecategory})$
on the total category~$\Ecategory$ \emph{restricts} to a model
structure on the fiber~$\Ecategory_A$ when the three classes
$\class C_A$, $\class W_A$, $\class F_A$
satisfy the properties required of a model structure
on the category~$\Ecategory_A$.

%

\medbreak
\noindent
This leads us to the main concept of the paper:
\begin{definition}[Quillen bifibrations]
  A Quillen bifibration $p\from \Ecategory\to\Bcategory$ is a
  Grothendieck bifibration where the basis category~$\Bcategory$ and
  the total category~$\Ecategory$ are equipped with a model structure,
  in such a way that
\begin{itemize}
\item the functor $p\from \Ecategory\to\Bcategory$ is a Quillen functor,
\item the model structure of~$\Ecategory$ restricts to a model
  structure on the fiber~$\Ecategory_A$, for every object~$A$ of the
  basis category~$\Bcategory$.
\end{itemize}
%
\end{definition}
This definition of Quillen bifibration deserves to be commented.
The first requirement that $p\from \Ecategory\to\Bcategory$ is a
Quillen functor means that every cofibration, weak equivalence and
fibration $f\from X\to Y$ of the total category~$\Ecategory$ lies
above a map $u\from A\to B$ of the same status in the model
category~$\Bcategory$.
This condition makes sense, and we will see in
section~\ref{sec:quillen-bifib} that it is satisfied in a number of
important examples.
The second requirement means that the model structure
$(\class C_{\Ecategory},\class W_{\Ecategory},\class F_{\Ecategory})$
combines into a single model structure on the total
category~$\Ecategory$ the family of model structures
$(\class C_A,\class W_A,\class F_A)$ on the fiber
categories~$\Ecategory_A$.
%
%

\paragraph{A Grothendieck construction for Quillen bifibrations.}
The notion of Quillen bifibration is tightly connected to the notion
of Quillen adjunction, thanks to the following observation established
in section~\ref{sec:quillen-bifib}.
\begin{proposition}
  In a Quillen bifibration $p\from \Ecategory\to\Bcategory$, the
  adjunction
  \begin{displaymath}
    \begin{tikzcd}[column sep=.8em]
      \push u & \from  & {\Ecategory_A} \arrow[rrrr, yshift=-.5ex] 
      &&&&
      {\Ecategory_B}  \arrow[llll, yshift=.5ex] 
      & \from  & u^*
    \end{tikzcd}
  \end{displaymath}
  is a Quillen adjunction, for every morphism $u\from A\to B$ of the
  basis category~$\Bcategory$.
\end{proposition}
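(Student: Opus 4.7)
The plan is to show that the right adjoint $u^\ast\from \Ecategory_B \to \Ecategory_A$ is a right Quillen functor; by Proposition~\ref{proposition/LR}, this suffices for the adjunction to be a Quillen adjunction. I would treat the two statements ``$u^\ast$ preserves fibrations'' and ``$u^\ast$ preserves acyclic fibrations'' in parallel via the right-lifting-property characterisation of (acyclic) fibrations, so fix $f\from Y\to Y'$ a fibration (resp.\ acyclic fibration) of $\Ecategory_B$ and $i\from X\to X'$ an acyclic cofibration (resp.\ cofibration) of $\Ecategory_A$.

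Given a commutative square in $\Ecategory_A$ posing the lifting problem of $i$ against $u^\ast f$, the key move is to transfer it to the total category $\Ecategory$ by post-composing with the cartesian lifts $\cart u Y$ and $\cart u {Y'}$. The resulting outer square commutes by the defining property of $u^\ast f$ as the unique arrow above $\id A$ satisfying $\cart u {Y'} \circ u^\ast f = f \circ \cart u Y$. The restriction clause in the definition of Quillen bifibration ensures that $i$ and $f$ are also an acyclic cofibration (resp.\ cofibration) and a fibration (resp.\ acyclic fibration) in the total model structure of $\Ecategory$; consequently the outer square admits a diagonal filler $\ell\from X' \to Y$ in $\Ecategory$.

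Projecting by $p$ forces $p(\ell) = u$, so the universal property of the cartesian arrow $\cart u Y$ yields a unique $\tilde\ell\from X' \to u^\ast Y$ above $\id A$ with $\cart u Y \circ \tilde\ell = \ell$. Two further applications of the uniqueness clause in that universal property---reading through $\cart u Y$ to identify $\tilde\ell \circ i$ with the top map of the original square, and through $\cart u {Y'}$ to identify $u^\ast f \circ \tilde\ell$ with the bottom map---verify that $\tilde\ell$ is the sought filler in $\Ecategory_A$.

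The only subtlety is conceptual rather than technical: one must resist trying to prove directly that $u^\ast f$ is itself a fibration in $\Ecategory$, since that would demand the right lifting property against \emph{all} acyclic cofibrations of $\Ecategory$ and not merely against those lying in the fiber $\Ecategory_A$. The cartesian arrows $\cart u Y$ and $\cart u {Y'}$ function instead as a bridge that imports a fiberwise lifting problem into $\Ecategory$ and transports the solution back down to $\Ecategory_A$; everything else reduces to diagram chasing against the cartesian universal property.
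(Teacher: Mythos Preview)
Your argument is correct and is essentially the dual of the paper's proof: the paper shows that $\push u$ preserves cofibrations by precomposing a fiberwise lifting problem with the cocartesian arrows $\cocart u X$, $\cocart u Y$ to land in $\Ecategory$, solving there, and then factoring the solution back through the cocartesian universal property; you do the mirror-image computation for $\pull u$ with the cartesian arrows $\cart u Y$, $\cart u{Y'}$. The verification steps (checking $p(\ell)=u$ and the two triangle identities via uniqueness of (co)cartesian factorizations) match one-for-one.
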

\noindent
From this follows that a Quillen bifibration induces an \emph{indexed
  model structure}
\begin{equation}
  \label{eq:pseudofunctor-to-quillen}%
  \begin{tikzcd}[column sep=1em]
    \Ppseudofunctor & \from  & \Bcategory\arrow[rrrr] &&&& \Quillen
  \end{tikzcd}
\end{equation}
defined as a pseudofunctor from a model structure~$\Bcategory$ to the
2-category $\Quillen$ of model structures, Quillen adjunctions, and
natural transformations.
Our main contribution in this paper is to formulate necessary and
sufficient conditions for a Grothendieck construction to hold in this
situation.
More specifically, we resolve the following problem.


\medbreak
\noindent
\emph{A. Hypothesis of the problem.}  We suppose given an indexed
Quillen category as we have just defined
in~(\ref{eq:pseudofunctor-to-quillen}) or equivalently, a Grothendieck
bifibration $p\from \Ecategory\to\Bcategory$ where
\begin{itemize}
\item the basis category~$\Bcategory$ is equipped with a model
  structure~$(\class C,\class W,\class F)$,
\item every fiber~$\Ecategory_A$ is equipped with a model structure
$(\class C_A,\class W_A,\class F_A)$,
\item the adjunction $(\push u,\pull u)$ is a Quillen adjunction, for
  every morphism $u\from A\to B$ of the basis category~$\Bcategory$.
\end{itemize}
\noindent
\emph{B. Resolution of the problem.}  We find necessary and sufficient
conditions to ensure that there exists a model structure
$(\class C_{\Ecategory},\class W_{\Ecategory},\class F_{\Ecategory})$
on the total category~$\Ecategory$ such that
\begin{itemize}
\item the Grothendieck bifibration $p\from \Ecategory\to\Bcategory$
  defines a Quillen bifibration,
\item for every object~$A$ of the basis category, the model structure
  $(\class C,\class W,\class F)$ of the total category~$\Ecategory$
  restricts to the model structure
  $(\class C_A,\class W_A,\class F_A)$ of the fiber~$\Ecategory_A$.
\end{itemize}
%
%
We establish in the course of the paper (see
section~\ref{sec:quillen-bifib}) that there exists at most one
solution to the problem, which is obtained by defining the
cofibrations and fibrations of the total category~$\Ecategory$ in the
following way:
\begin{itemize}
\item a morphism $f\from X\to Y$ of the total category~$\Ecategory$ is a \emph{total cofibration}
when it factors as $X\to Z\to Y$ where $X\to Z$ is a cocartesian map above a cofibration $u\from A\to B$
of~$\Bcategory$, and $Z\to Y$ is a cofibration in the fiber~$\Ecategory_B$,
\item a morphism $f\from X\to Y$ of the total category~$\Ecategory$ is a \emph{total fibration}
when it factors as $X\to Z\to Y$ where $Z\to Y$ is a cartesian map 
above a fibration $u\from A\to B$ of~$\Bcategory$, and $X\to Z$ is a fibration in the fiber~$\Ecategory_A$.
\end{itemize}
%
%
%
\begin{proposition}[Uniqueness of the solution]
When the solution $(\class C_{\Ecategory},\class W_{\Ecategory},\class F_{\Ecategory})$ 
exists, it is uniquely determined by the fact that its fibrations and cofibrations
are the total cofibrations and total fibrations of the total category~$\Ecategory$, respectively.
\end{proposition}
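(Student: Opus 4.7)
My plan is to characterize the cofibrations of $\Ecategory$ as precisely the total cofibrations, and dually for fibrations, by proving both inclusions and then noting that the weak equivalences of a model structure are determined by its cofibrations and fibrations.

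The key preliminary lemma is that a cocartesian lift $\cocart u X : X \to \push u X$ above a cofibration $u$ of $\Bcategory$ is itself a cofibration of $\Ecategory$ (dually, a cartesian lift above a fibration of $\Bcategory$ is a fibration of $\Ecategory$). To see this I verify the left lifting property against an arbitrary acyclic fibration $j : P \to Q$ of $\Ecategory$: given a lifting square $(a,b)$ from $\cocart u X$ to $j$, the Quillen-functor hypothesis guarantees that $p(j)$ is an acyclic fibration of $\Bcategory$, so the projected lifting problem admits a diagonal $\ell : B \to p(P)$. The cocartesian universal property of $\cocart u X$ then produces a unique $a' : \push u X \to P$ above $\ell$ with $a' \circ \cocart u X = a$, and the two maps $j a'$ and $b$ agree after precomposition with $\cocart u X$; since they both lie above $p(b) = p(j) \circ \ell$, the uniqueness clause of the cocartesian property forces $j a' = b$, so $a'$ solves the original problem. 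From this lemma, together with the restriction axiom which makes every cofibration of a fiber a cofibration of $\Ecategory$, and the closure of cofibrations under composition, the inclusion ``total cofibration $\Rightarrow$ cofibration'' is immediate.

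The reverse inclusion is the main technical point. Let $f : X \to Y$ be a cofibration of $\Ecategory$, necessarily lying above a cofibration $u : A \to B$ of $\Bcategory$. Using the cocartesian property I factor $f = \tilde f \circ \cocart u X$ with $\tilde f : \push u X \to Y$ in the fiber $\Ecategory_B$, and then factor $\tilde f$ inside the fiber model structure as $\tilde f = \beta \alpha$ with $\alpha$ a cofibration of $\Ecategory_B$ and $\beta$ an acyclic fibration of $\Ecategory_B$. By restriction $\beta$ is an acyclic fibration of $\Ecategory$, so the lifting square with $f$ on the left, $\beta$ on the right, $\alpha \circ \cocart u X$ on top and $\id{Y}$ at the bottom admits a diagonal $s : Y \to Z$ with $s f = \alpha \circ \cocart u X$ and $\beta s = \id{Y}$. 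A quick inspection of $p$-images (using $\beta \in \Ecategory_B$ so $p(\beta) = \id{B}$) shows $s$ lies in $\Ecategory_B$; then $s \tilde f$ and $\alpha$ both lie above $\id{B}$ and agree after precomposition with $\cocart u X$, and cocartesian uniqueness forces $s \tilde f = \alpha$. This exhibits $\tilde f$ as a retract of $\alpha$ inside $\Ecategory_B$, and since the cofibrations of $\Ecategory_B$ are closed under retracts, $\tilde f$ is a cofibration of the fiber. Thus $f = \tilde f \circ \cocart u X$ is a total cofibration.

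The analogous argument for fibrations is the formal dual: replace cocartesian lifts by cartesian ones, factor in the fiber via the dual (acyclic cofibration, fibration) factorization, and run the retract argument using a section produced by an acyclic-cofibration-versus-fibration lifting. The main obstacle throughout is bookkeeping: one must repeatedly check that an ambient lift in $\Ecategory$ actually restricts to a fiber, so that fiber-level universal properties such as cocartesian uniqueness can be invoked; it is precisely at these junctures that the Quillen functor hypothesis on $p$ and the restriction hypothesis on each fiber interact most tightly.
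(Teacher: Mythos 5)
Your proof is correct and rests on the same structural plan as the paper's (which appears as Lemmas~\ref{lem:quill-bifib-cocart-above-cof} and~\ref{lem:quill-bifib-cart-above-fib} together with Propositions~\ref{prop:quill-bifib-charac-total-cof} and~\ref{prop:quill-bifib-charac-total-fib}): the same key lemma that a cocartesian lift above a cofibration of $\Bcategory$ is a cofibration of $\Ecategory$, the same easy direction by closure of cofibrations under composition, and the observation that cofibrations and fibrations determine the rest of the model structure. Where you diverge is in the hard direction, cofibration implies total cofibration. The paper argues it directly: given a lifting square in $\Ecategory_B$ with $\pushfact f$ on the left and a fiber acyclic fibration $q$ on the right, it precomposes the top with $\cocart u X$, solves the resulting square in $\Ecategory$ using the cofibration $f$, then checks (via $qh=g'$ with $q$ and $g'$ fiberwise) that the lift $h$ lands in the fiber and, by cocartesian uniqueness, that $h\pushfact f = g$. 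You instead factor $\pushfact f$ in the fiber as a cofibration $\alpha$ followed by an acyclic fibration $\beta$, produce a section $s$ of $\beta$ by lifting $f$ against $\beta$ in $\Ecategory$, and then exhibit $\pushfact f$ as a retract of $\alpha$. Both arguments are correct and both hinge on the same delicate bookkeeping, namely that the ambient lift lies in the fiber so that cocartesian uniqueness can be invoked in $\Ecategory_B$. The paper's version is marginally shorter since it avoids the extra factorization; yours makes the use of the restriction hypothesis a little more transparent, as it explicitly isolates the fiberwise (cofibration, acyclic fibration) factorization and closure of fiberwise cofibrations under retracts.
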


Besides the formulation of Quillen bifibrations, our main contribution
is to devise two conditions called \eqref{hyp:weak-conservative} for
\emph{homotopical conservativity} and \eqref{hyp:hBC} for
\emph{homotopical Beck-Chevalley}, and to show (see
theorem~\ref{thm:main}) that they are sufficient and necessary for the
solution to exist.
%

\subsection{Related works}

The interplay between bifibred categories and model structures was
first explored by Roig in \cite{roig:model-bifibred}, providing
results in homological diffrentially graded algebra. Stanculescu then
spotted a mistake in Roig's theorem and subsequently corrected it in
\cite{stanculescu:bifib-model}. Finally,
\cite{harpaz-prasma:grothendieck-model} tackles the problem of
reflecting Lurie's Grothendieck construction for $\infty$-categories
at the level of model categories, hence giving a model for lax
colimits of diagrams of $\infty$-categories.

This work is directly in line with, and greatly inspired by, these
papers. In our view, both Roig-Stanculescu's and Harpaz-Prasma's
results suffer from flaws. The former introduces a very strong
asymmetry, making natural expectations unmet. For example, for any
Grothendieck bifibration $p\from \cat E \to \cat B$, the opposite
functor $\op p\from \op{\cat E} \to \op{\cat B}$ is also a
Grothendieck bifibration. So we shall expect that when it is possible
to apply Roig-Stanculescu's result to the functor $p$, provinding this
way a model structure on $\cat E$, it is also possible to apply it to
$\op p$, yielding on $\op{\cat E}$ the opposite model structure. This
is not the case: for almost every such $p$ for which the result
applies, it does not for the functor $\op p$. The latter result by
Harpaz and Prasma on the contrary forces the symmetry by imposing a
rather strong assumption: the adjoint pair $(\push u,\pull u)$
associated to a morphism $u$ of the base $\cat B$, already required to
be a Quillen adjunction in \cite{roig:model-bifibred} and
\cite{stanculescu:bifib-model}, needs in addition to be a Quillen
equivalence whenever $u$ is a weak equivalence. While it is a key
property for their applications, it put aside real world examples that
nevertheless satisfy the conclusion of the result. The goal of this
paper is to lay out a common framework fixing these flaws. This is
achieved in theorem \ref{thm:main} by giving necessary and sufficient
conditions for the resulting model structure on $\cat E$ to be the one
described in both cited results.

\paragraph{Plan of the paper.} Section \ref{sec:liminaries} recalls
the basic facts we will need latter about Grothendieck bifibrations
and model categories. It also introduces \define{intertwined weak
factorization systems}, a notion that pops here and there on forums
and the n\nobreakdash-Category Caf\'e, but does not appear in the
literature to the best of our knowledge. Its interest mostly resides
in that it singles out the 2-out-of-3 property of weak equivalences in
a model category from the other more {\em combinatorial}
properties. Finally we recall in that section a result of
\cite{stanculescu:bifib-model} in order to make this paper
self-contained.

Section \ref{sec:actual-thm} contains the main theorem~\ref{thm:main}
that we previously announced. Its proof is cut into two parts: first
we prove the necessity of conditions~\ref{hyp:weak-conservative} and
\ref{hyp:hBC}, and then we show that they are sufficient as well. The
proof of necessity is the easy part and comes somehow as a bonus,
while the proof of sufficiency is much harder and expose how
conditions~\eqref{hyp:weak-conservative} and \eqref{hyp:hBC} play
their role.

Section \ref{sec:examples} illustrates \ref{thm:main} with some
applications in usual homotopical algebra. First, it gives an original
view on Kan's theorem about Reedy model structures by stating it in a
bifibrationnal setting. Here should it be said that this was our
motivating example. We realized that neither Roig-Stanculescu's or
Harpaz-Prasma's theorem could be apply to the Reedy construction,
although the conclusion of these results was giving Kan's theorem
back. As in any of those {\em too good no to be true} situations, we
took that as an incentive to strip down the previous results in order
to only keep what makes them {\em tick}, which eventually has led to
the equivalence of theorem \ref{thm:main}. Section
\ref{subsec:versus-hp-rs} gives more details about Roig-Stanculescu's
and Harpaz-Prasma's theorem, and explains how their analysis started
the process of this work.

\paragraph{Convention.} All written diagrams commute if not said
otherwise. When objects are missing and replaced by a dot, they can be
parsed from other informations on the diagram. Gray parts help to
understand the diagram's context.

\paragraph{Acknowledgments.}
The authors are grateful to Clemens Berger for making them aware of
important references at the beginning of this work, and to Georges
Maltsiniotis for an early review of theorem~\ref{thm:main} and
instructive discussions around possible weakenings of the notion of
Quillen bifibration.

\section{Liminaries}
\label{sec:liminaries}

\subsection{Grothendieck bifibrations}
\label{subsec:lim-bifib}
In this section, we recall a number of basic definitions
and facts about Grothendieck bifibrations.

\medbreak

Given a functor $p:\Ecategory\to\Bcategory$, we shall use the following terminology.
The categories~$\Bcategory$ and~$\Ecategory$ are called
the \emph{basis category}~$\Bcategory$ and the \emph{total category}~$\Ecategory$
of the functor $p:\Ecategory\to\Bcategory$.
We say that an object $X$ of the total category~$\Ecategory$
is above an object $A$ of the basis category~$\Bcategory$
when $p(X)=A$ and, similarly, that a morphism $f:X\to Y$ 
is above a morphism $u:A\to B$ when $p(f)=u$.
The fiber of an object~$A$ in the basis category~$\Bcategory$
with respect to $p$ is defined as the subcategory of $\Ecategory$ 
whose objects are the objects~$X$ such that $p(X)=A$
and whose morphisms are the morphisms~$f$ such that $p(f)=\id A$.
In other words, the fiber of~$A$ is the category of objects above~$A$,
and of morphisms above the identity $\id A$.
The fiber is noted $p_A$ or $\Ecategory_A$ when no confusion is possible.

\medbreak

A morphism $f:X\to Y$ in a category~$\Ecategory$ is called \emph{cartesian}
with respect to the functor $p:\Ecategory\to\Bcategory$ when the commutative diagram
$$
\begin{tikzcd}[column sep=2em, row sep=1em]
\Ecategory(Z,X)
\arrow[rr,"{f\circ -}"] 
\arrow[dd,"{p}", swap] 
&& 
\Ecategory(Z,Y)
\arrow[dd,"p"] 
\\
\\
\Bcategory(C,A)
\arrow[rr,"{u\circ -}"] && 
\Bcategory(C,B)
\end{tikzcd}
$$
is a pullback diagram for every object $Z$ in the category~$\Ecategory$.
Here, we write~$u:A\to B$ and~$C$ for the images $u=p(f)$
and $C=p(Z)$ of the morphism~$f$ and of the object~$Z$, respectively.
Unfolding the definition, this means that for every pair of morphisms $v:C\to A$
and $g:Z\to Y$ above $u\circ v:C\to B$,
there exists a unique morphism $h:Z\to X$ above $v$ such that $h\circ f=g$.
The situation may be depicted as follows:
\begin{center}
$\begin{array}{c}
\begin{tikzcd}[column sep=1.2em, row sep=.8em]
Z
\arrow[rrrrrrd, bend left, "g"]
\arrow[rrd,dashed,"h"]
\arrow[dd,dotted,no head]
\\
&& 
X
\arrow[rrrr,"f"]
\arrow[dd,dotted,no head]
&&&& 
Y
\arrow[dd,dotted,no head]
\\
C
\arrow[rrrrrrd, bend left]
\arrow[rrd,"v"{swap}]
\\
&& A\arrow[rrrr,"u"{swap}]  &&&& B
\end{tikzcd}
\end{array}$
\end{center}
Dually, a morphism $f:X\to Y$ in a
category~$\Ecategory$ is called \emph{cocartesian} with respect to the
functor $p:\Ecategory\to\Bcategory$ when the commutative diagram
$$
\begin{tikzcd}[column sep=2em, row sep=1em]
\Ecategory(Y,Z)
\arrow[rr,"{-\circ f}"] 
\arrow[dd,"{p}", swap] 
&& 
\Ecategory(X,Z)
\arrow[dd,"p"] 
\\
\\
\Bcategory(B,C)
\arrow[rr,"{-\circ u}"] && 
\Bcategory(A,C)
\end{tikzcd}
$$
is a pullback diagram for every object $Z$ in the category~$\Ecategory$.
%
%
This means that for every pair of morphisms $v:A\to C$
and $g:X\to Z$ above $v\circ u:A\to C$,
there exists a unique morphism $h:Z\to X$ above~$v$ such that $h\circ f=g$.
%
Diagrammatically:
\begin{center}
$\begin{array}{c}
\begin{tikzcd}[column sep=1.2em, row sep=.8em]
&& &&&& 
Z
\arrow[dd,dotted,no head]
\\
X\arrow[rrrr,"f"] 
\arrow[rrrrrru, bend left, "g"]
\arrow[dd,dotted,no head]
&&&& 
Y
\arrow[rru,dashed,"h"]
\arrow[dd,dotted,no head]
\\
&& &&&& 
C
\\
A\arrow[rrrr,"u"{swap}] 
\arrow[rrrrrru, bend left]
&&&& 
B
\arrow[rru,"v"{swap}]
\end{tikzcd}
\end{array}$
\end{center}
A functor $p:\Ecategory\to\Bcategory$ is called a \emph{Grothendieck
  opfibration} when for every morphism $u:A\to B$ and for every
object~$Y$ above~$B$, there exists a cartesian morphism
$f:X\to Y$ above~$u$.
Symmetrically, a functor $p:\Ecategory\to\Bcategory$ is called a
\emph{Grothendieck opfibration} when for every morphism $u:A\to B$ and
for every object~$X$ above~$A$, there exists a cocartesian morphism
$f:X\to Y$ above~$u$.
Note that a functor $p:\Ecategory\to\Bcategory$ is a Grothendieck
opfibration precisely when the functor
$p^{op}:\Ecategory^{op}\to\Bcategory^{op}$ is a Grothendieck
fibration.
A \emph{Grothendieck bifibration} is a functor
$p:\Ecategory\to\Bcategory$ which is at the same time a Grothendieck
fibration and opfibration.
%

%
%

\begin{definition}
  A \define{cloven Grothendieck bifibration} is a functor
  $p \from \cat E \to \cat B$ together with
  \begin{itemize}
  \item for any $Y \in \cat E$ and $u \from A \to pY$, an object
    $\pull u Y \in \cat E$ and a cartesian morphism
    $\cart[p] u Y \from \pull u Y \to Y$ above $u$,
  \item for any $X \in \cat E$ and $u \from pX \to B$, an object
    $\push u X \in \cat E$ and a cocartesian morphism
    $\cocart[p] u X\from X \to \push u X$ above $u$.
  \end{itemize}
\end{definition}
When the context is clear enough, we might omit the index $p$. The
domain category $\cat E$ is often called the \define{total category}
of $p$, and its codomain $\cat B$ the \define{base category}. We shall
use this terminology when suited.
\begin{remark}
  If $\cat E$ and $\cat B$ are small relatively to a universe
  $\mathbb U$ in which we suppose the axiom of choice, then a cloven
  Grothendieck bifibration is exactly the same as the original notion
  of Grothendieck bifibration. Hence, in this article, we treat the
  two names as synonym.
\end{remark}


The data of such cartesian and cocartesian morphisms gives two
factorizations of an arrow $f\from X \to Y$ above some arrow
$u\from A \to B$,$\pushfact f$ in the fiber $\fiber {\cat E} B$ and
$\pullfact f$ in the fiber $\fiber {\cat E} A$: one goes through
$\cart u Y$ and the other through $\cocart u X$. See the diagram
below:
\begin{displaymath}
  \begin{tikzcd}
    X \ar[dr,"f"{description}] \ar[d,"\pullfact f"swap]
    \ar[r,""] &
    \push u X \ar[d,"\pushfact f"] \\
    \pull u Y \ar[r,""swap]& Y 
  \end{tikzcd}
\end{displaymath}
In turn, this allows $\push u$ and $\pull u$ to be extended as
\define{adjoint functors}:
\begin{displaymath}
  \push u \from \fiber{\cat E} A \adjointarrows
  \fiber{\cat E} B \cofrom \pull u
\end{displaymath}
where the action of $\push u$ on a morphism $k\from X \to X'$ of
$\fiber{\cat E} A$ is given by $\pushfact{(\cocart u {X'}\circ k)}$
and the action of $\pull u$ on a morphism $\ell \from Y' \to Y$ is
given by $\pullfact{(\ell \circ \cart u {Y'})}$:
\begin{displaymath}
  \begin{tikzcd}
    X \ar[d,"k"swap] \ar[r,""] &
    \push u X \ar[d,"\push u (k)"] \\
    X' \ar[r,""swap]& \push u {X'} 
  \end{tikzcd}
  \hskip 5em
  \begin{tikzcd}
    \pull u {Y'} \ar[d,"\pull u \ell"swap] \ar[r,""] &
    Y' \ar[d,"\ell"] \\
    \pull u Y \ar[r,""swap]& Y 
  \end{tikzcd}
\end{displaymath}
This gives a mapping $\cat B \to \Adj$ from the category $\cat B$ to
the $2$-category $\Adj$ of adjunctions: it maps an object $A$ to the
fiber $\fiber{\cat E}A$, and a morphism $u$ to the push-pull
adjunction $(\push u,\pull u)$. This mapping is even a pseudofunctor:
\begin{itemize}
\item For any $A \in\cat B$ and $X \in \fiber {\cat E} A$, we can
  factor $\id X \from X \to X$ through $\cocart {\id A}{X}$ and
  $\cart {\id A} X$:
  \begin{displaymath}
    \begin{tikzcd}
      X \drar[equal] \rar["\cocart {\id A}X"] & \push{(\id A)} X
      \dar["\pushfact{(\id X)}"]
      \\
      {} & X
    \end{tikzcd}
    \hskip 8em
    \begin{tikzcd}
      X \drar[equal] \dar["\pullfact{(\id
        X)}"swap] &
      \\
      \pull{(\id A)} X \rar["\cart {\id A}X"swap] & X
    \end{tikzcd}
  \end{displaymath}
  In particular by looking at the diagram on the left, both
  $\cocart {\id A} X \circ \pushfact{(\id X)}$ and the identity of
  $\push{(\id A)} X$ are solution to the problem of finding an arrow
  $f$ above $\id A$ such that $f\cocart{\id A} X = \cocart{\id A} X$:
  by the unicity condition of the cocartesian morphisms, it means that
  they are equal, or otherwise said that $\pushfact{(\id X)}$ is an
  isomorphism with inverse $\cocart{\id A}X$. Dually, looking at the
  diagram on the right, we deduce that $\pullfact{(\id X)}$ is an
  isomorphism with inverse $\cart{\id A} X$. All is natural in $X$, so
  we end up with
  \begin{displaymath}
    \push{(\id A)} \simeq \id {\fiber{\cat E}A} \simeq \pull{(\id A)}
  \end{displaymath}
\item For any $u\from A\to B$ and $v \from B \to C$ in $\cat B$, and
  for any $X \in \fiber{\cat E}A$, the cocartesian morphism
  $\cocart {vu} X\from X \to \push{(vu)} X$ is above $vu$ by
  definition hence should factorize as $h\cocart u X$ for some $h$
  above $v$, yielding $\pushfact h$ as a morphism in
  $\fiber{\cat E} C$ such that the following commutes:
  \begin{displaymath}
    \begin{tikzcd}
      X \rar["\cocart u X"] \ar[drr,bend right=15,"\cocart{vu}{X}"swap] &
      \push u X \rar["\cocart v {\push u X}"] \drar["h"swap] &
      \push v {\push u X} \dar["\pushfact h"]
      \\
      & & \push {(vu)} X
    \end{tikzcd}
  \end{displaymath}
  Writing simply $k$ for the composite
  $\cocart v {\push u X} \circ \cocart u X$, the following commutes:
  \begin{displaymath}
    \begin{tikzcd}
      X \drar["\cocart u X"swap] \ar[rr,"\cocart{vu}{X}"] & & \push
      {(vu)} X \ar[dd,"\pushfact k"]
      \\
      & \push u X \drar["\cocart v {\push u X}"swap] &
      \\
      & & \push v {\push u X}
    \end{tikzcd}
  \end{displaymath}
  Clearly $\pushfact h\pushfact k$ and $\id{\push v\push u X}$ both
  are solution to the problem of finding $f$ above $\id C$ such that
  $f\cocart{vu} X = \cocart{vu} X$: the uniqueness condition in the
  definition of cocartesian morphisms forces them to be
  equal. Conversely, we use the cocartesianness of $\cocart u X$ and
  $\cocart v {\push u X}$ in two steps: first
  $\pushfact k \pushfact h \cocart v {\push u X} = \cocart v {\push u
    X}$ because they both answer the problem of finding $f$ above $v$
  such that $f \cocart u X = \cocart v {\push u X} \circ \cocart u X$;
  from which we deduce
  $\pushfact k \pushfact h = \id{\push v \push u X}$ as they both
  answer the problem of finding a map $f$ above $\id C$ such that
  $f \cocart v {\push u X} = \cocart v{\push u X}$. In the end,
  $\pushfact h$ and $\pushfact k$ are isomorphisms, inverse to each
  other. All we did was natural in $X$, hence we have
  \begin{displaymath}
    \push {(vu)} \simeq \push v \push u
  \end{displaymath}
\item The dual argument shows that
  $\pull {(vu)} \simeq \pull u \pull v$.
\item To prove rigorously the pseudo functoriality of
  $\cat B \to \Adj$, we should show that the isomorphisms we have
  exhibited above are coherent. This is true, but irrelevant to this
  work, so we will skip it.
\end{itemize}

The pseudo functoriality relates through an isomorphism the chosen
(co)cartesian morphism above a composite $vu$ with the composite of
the chosen (co)cartesian morphisms above $u$ and $v$. The following
lemma gives some kind of extension of this result.
\begin{lemma}
  \label{lem:pseudo-push-iso}%
  Let $u\from A \to B$, $v\from B \to C$ and $w\from C \to D$ in
  $\cat B$.  Suppose $f \from X \to Y$ in $\cat E$ is above the
  composite $wvu$. Then for the unique maps
  $h\from\push v \push u X \to Y$ and $k\from \push {(vu)} X \to Y$
  above $w$ that fill the commutative triangles
  \begin{displaymath}
    \begin{tikzcd}
      X \rar["\cocart u X"] \ar[drrr,bend right=15,"f"swap] & \push u
      X \rar["\cocart v {\push u X}"] \ar[drr,lightgray] & \push v
      \push u X \drar["h"] &
      \\
      & & & Y
    \end{tikzcd}
    \quad%
    \begin{tikzcd}
      X \ar[rr,"\cocart {vu} X"] \ar[drrr,bend right=15,"f"swap] & &
      \push {(vu)} X \drar["k"] &
      \\
      & & & Y
    \end{tikzcd}
  \end{displaymath}
  there exists an isomorphism $\phi$ in the fiber $\fiber{\cat E}C$
  such that $h\phi = k$.
\end{lemma}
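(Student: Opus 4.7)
My plan is to produce $\phi$ directly as the pseudofunctoriality isomorphism $\push{(vu)}X \simeq \push v\push u X$ already constructed in the discussion preceding the lemma, and then to read off $h\phi=k$ from the universal property of $\cocart{vu}X$. The intuition is that both $h$ and $k$ arise from the same datum $f$ by factoring through the cocartesian maps above $vu$, and the two competing factorizations $\cocart{vu}X$ versus $\cocart v{\push u X}\circ\cocart u X$ are canonically isomorphic precisely because both solve the same universal problem over $vu$.

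Concretely, first I would define
\[
\phi \;=\; \pushfact{\bigl(\cocart v{\push u X}\circ\cocart u X\bigr)} \from \push{(vu)}X \longrightarrow \push v\push u X,
\]
which by construction lies in the fiber $\fiber{\cat E}C$ and is characterized by the identity $\phi\circ\cocart{vu}X = \cocart v{\push u X}\circ\cocart u X$. The same argument used just above to prove $\push{(vu)}\simeq\push v\push u$ (applied pointwise at $X$) shows that $\phi$ is an isomorphism: one exhibits a candidate inverse via the factorization of $\cocart{vu}X$ through $\cocart v{\push u X}\circ\cocart u X$, and then two applications of the uniqueness clause in the cocartesian property force the two composites to be identities.

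Next, to verify $h\phi=k$, I would compute
\[
h\phi\circ\cocart{vu}X \;=\; h\circ\bigl(\cocart v{\push u X}\circ\cocart u X\bigr) \;=\; f \;=\; k\circ\cocart{vu}X,
\]
where the first equality uses the defining identity of $\phi$ and the second and third come from the defining commutative triangles of $h$ and $k$ in the statement. Since both $h\phi$ and $k$ lie above $w$, the uniqueness part of the universal property of the cocartesian morphism $\cocart{vu}X$ yields $h\phi=k$, as required.

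There is no real obstacle here: the argument is essentially a bookkeeping exercise with the universal property of cocartesian morphisms, and every ingredient has already been set up in the pseudofunctoriality discussion. The only subtlety to keep track of is that $\phi$ must truly sit above $\id C$ (so that $h\phi$ is above $w$ like $k$), which is automatic from the construction of $\phi$ as a factorization $\pushfact{(\cdot)}$.
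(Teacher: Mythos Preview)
Your proof is correct and essentially identical to the paper's own argument: both take $\phi$ to be the pseudofunctoriality isomorphism $\push{(vu)}X\simeq\push v\push u X$ satisfying $\phi\circ\cocart{vu}X=\cocart v{\push u X}\circ\cocart u X$, and then invoke the uniqueness clause of the cocartesian property of $\cocart{vu}X$ to conclude $h\phi=k$. The paper simply references this isomorphism from the preceding discussion rather than reconstructing it, but the logic is the same.
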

\begin{proof}
  We know there is a isomorphism
  $\phi \from \push{(vu)} X \to \push u \push v X$ above $\id C$ such
  that
  $\phi \cocart {vu} X = \cocart v {\push u X} \circ \cocart u X$. But
  then $h\phi \from \push{(vu)} X \to Y$ is above $w$ and fills the
  same triangle $k$ does in the statement: by unicity, $k = h\phi$.
  \begin{displaymath}
    \begin{tikzcd}
      & & \push {(vu)} X \dar["\phi"] \ar[ddr,bend left,"k"] &
      \\
      X \ar[urr,"\cocart {vu} X",bend left=20] \rar["\cocart u X"]
      \ar[drrr,bend right=15,"f"swap] & \push u X \rar["\cocart v
      {\push u X}"] & \push v \push u X \drar["h"] &
      \\
      & & & Y
    \end{tikzcd}
  \end{displaymath}
\end{proof}

Of course, we have the dual statement, that accepts a dual proof.
\begin{lemma}
  \label{lem:pseudo-pull-iso}%
  Let $u\from A \to B$, $v\from B \to C$ and $w\from C \to D$ in
  $\cat B$.  Suppose $f \from X \to Y$ in $\cat E$ is above the
  composite $wvu$. Then for the unique maps
  $h\from X \to \pull u \pull v Y$ and $k\from X \to \pull {(vu)} Y$
  above $w$ that fill the commutative triangles
  \begin{displaymath}
    \begin{tikzcd}
      X \ar[drrr,bend left=15,"f"] \ar[drr,lightgray] \drar["h"swap] & &
      &
      \\
      & \pull u \pull v Y \rar["\cart u {\pull v Y}"swap] & \pull v Y
      \rar["\cart v Y"swap] & Y
    \end{tikzcd}
    \quad%
    \begin{tikzcd}
      X \ar[drrr,bend left=15,"f"] \drar["k"swap] & &
      &
      \\
      & \pull {(vu)} Y \ar[rr,"\cart {vu}Y"swap] & & Y
    \end{tikzcd}    
  \end{displaymath}
  there exists an isomorphism $\phi$ in the fiber $\fiber{\cat E}C$
  such that $\phi k = h$.
\end{lemma}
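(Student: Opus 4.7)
The plan is to dualise the argument used for Lemma \ref{lem:pseudo-push-iso}, relying on the pseudofunctoriality $\pull{(vu)}\simeq \pull u \pull v$ established in the body of the section, and then to invoke the uniqueness clause in the universal property of cartesian morphisms to identify $\phi k$ with $h$.

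First I would unpack the pseudofunctoriality of the pull construction at $Y$: it produces a canonical isomorphism
\begin{equation*}
  \phi \from \pull{(vu)} Y \longrightarrow \pull u \pull v Y
\end{equation*}
living in a single fiber (i.e.\ above an identity of $\cat B$) and characterised by the equality
\begin{equation*}
  \cart v Y \circ \cart u {\pull v Y} \circ \phi \;=\; \cart{vu}Y.
\end{equation*}
This is exactly the mirror image of the isomorphism $\push{(vu)}X \to \push v \push u X$ used in the proof of Lemma \ref{lem:pseudo-push-iso}, and it is produced by the same cocycle-type argument applied to the cartesian side.

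Next I would check that $\phi k$ and $h$ solve the very same lifting problem, so that the uniqueness in the definition of a cartesian morphism forces them to coincide. Since $k$ is above $w$ and $\phi$ is above an identity, the composite $\phi k\from X \to \pull u \pull v Y$ is above $w$; moreover
\begin{equation*}
  \cart v Y \circ \cart u{\pull v Y} \circ (\phi k)
  \;=\; (\cart v Y \circ \cart u{\pull v Y}\circ\phi)\circ k
  \;=\; \cart{vu}Y \circ k
  \;=\; f .
\end{equation*}
Thus $\phi k$ fills the commutative triangle which uniquely characterises $h$ in the statement, and the unicity clause yields $h = \phi k$, as required.

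The argument is a routine dualisation and I do not anticipate any genuine obstacle; the only care required is to keep track of the direction of $\phi$ (from $\pull{(vu)}Y$ to $\pull u\pull v Y$, so as to be left-composable with $k$) and of the fiber in which it is defined, exactly as in the proof of the push version.
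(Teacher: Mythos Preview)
Your proposal is correct and is precisely the dualisation the paper has in mind: the paper itself gives no explicit proof, merely stating that the lemma ``accepts a dual proof'' of Lemma~\ref{lem:pseudo-push-iso}, and your argument---take the pseudofunctoriality isomorphism $\phi$ relating $\pull{(vu)}Y$ and $\pull u\pull v Y$, then use the uniqueness clause of the cartesian factorisation to conclude $\phi k = h$---is exactly that dual. The only cosmetic point is that the fiber containing $\phi$ is the one over the domain of $vu$ (the paper's statement writes $\fiber{\cat E}C$, mirroring the push version, but the pull objects live over the other end), which you implicitly handle by saying $\phi$ is above an identity.
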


Suppose now that we have a chain of composable maps in $\cat B$:
\begin{displaymath}
  \begin{tikzcd}
    A_0 \rar["u_1"] & A_1 \rar["u_2"] & \dots \rar["u_n"] & A_n
  \end{tikzcd}
\end{displaymath}
And let $f \from X \to Y$ be a map above the composite
$u_n\dots u_1u_0$. Choose $0\leq i,j\leq n$ such that $i+j\leq
n$. Then, using (co)cartesian choices above maps in $\cat B$, one can
construct two canonical maps associated to $f$: these are the unique maps
\begin{displaymath}
  \begin{aligned}
    h &\from \push {(u_i)} \cdots \push {(u_0)} X \to \pull
    {(u_{n-j+1})} \cdots \pull {(u_n)} Y
    \\
    &\text{and}
    \\
    k &\from \push{(u_i\cdots u_0)} X \to \pull{(u_n\cdots u_{n-j+1})} Y
  \end{aligned}
\end{displaymath}
above $u_{n-j}\cdots u_{i+1}\from A_i \to A_{n-j}$ (which is defined
as $\id {A_i}$ in case $i+j=n$) filling in the following commutative
diagrams:
\begin{displaymath}
  \begin{tikzcd}[column sep=small]
    X \rar["\cocartz"]
    & \dots \rar["\cocartz"] & \push {(u_i)} \cdots \push{(u_0)} X
    \ar[phantom,ddr,""{coordinate,name=Z}] & & &
    \ar[from=1-1,to=3-6,"f"swap,rounded corners,to
    path={(\tikztostart.south) |- (Z) [near end]\tikztonodes -|
      (\tikztotarget.north)}]
    \ar[from=1-3,to=3-4,"h"{fill=white},crossing over]
    \\
    & & & & &
    \\
    & & & \pull {(u_{n-j+1})} \cdots \pull {(u_n)} Y \rar["\cartz"]
    & \dots \rar["\cartz"]
    & Y
    \\
    &&&&&
    \\
    X \ar[rr,"\cocartz"]
    & & \push {(u_i\cdots u_0)} X
    \ar[phantom,ddr,""{coordinate,name=Z}] & & &
    \ar[from=5-1,to=7-6,"f"swap,rounded corners,to
    path={(\tikztostart.south) |- (Z) [near end]\tikztonodes -|
      (\tikztotarget.north)}]
    \ar[from=5-3,to=7-4,"k"{fill=white},crossing over]
    \\
    & & & & &
    \\
    & & & \pull {(u_n\cdots u_{n-j+1})} Y \ar[rr,"\cartz"]
    &
    & Y
  \end{tikzcd}
\end{displaymath}
By applying the previous lemmas multiples times, we get the following
useful corollary.
\begin{corollary}
  \label{cor:pseudo-iso-in-fiber}%
  There is fiber isomorphisms $\phi$ and $\psi$ such that the
  following commutes:
  \begin{displaymath}
    \begin{tikzcd}[column sep=small]
      X \rar["\cocartz"]
      & \dots \rar["\cocartz"] & \push {(u_i)} \cdots \push{(u_0)} X
      \ar[phantom,ddr,""{coordinate,name=Z}] & & &
      \ar[from=1-1,to=3-6,"f",rounded corners,to
      path={(\tikztostart.south) |- (Z) -| (\tikztotarget.north) [near
        start]\tikztonodes}]
      \ar[from=1-1,to=4-3,"\cocartz"{swap},crossing over]
      \ar[from=1-3,to=3-4,"h"{fill=white},crossing over]
      \\
      & & & & &
      \\
      & & & \pull {(u_{n-j+1})} \cdots \pull {(u_n)} Y \rar["\cartz"] &
      \dots \rar["\cartz"]
      & Y
      \\
      & & \push {(u_i\cdots u_0)} X
      \ar[phantom,ddr,""{coordinate,name=Z}] \ar[uuu,"\phi",crossing
      over] & & &
      \ar[from=4-3,to=6-4,"k"{fill=white},crossing over]
      \\
      & & & & &
      \\
      & & & \pull {(u_n\cdots u_{n-j+1})} Y \ar[uuu,"\psi",crossing
      over] 
      &
      & 
      \ar[from=6-4,to=3-6,"\cartz"{swap}]
    \end{tikzcd}
  \end{displaymath}
\end{corollary}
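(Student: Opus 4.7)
The plan is to construct $\phi$ and $\psi$ by iterating Lemmas~\ref{lem:pseudo-push-iso} and~\ref{lem:pseudo-pull-iso}; the two constructions are dual, so I only describe $\phi$ in detail. I proceed by induction on $i$. For $i = 0$ take $\phi$ to be the identity, so that the top half of the diagram reduces to the defining factorisation of $f$ through $\cocart{u_0}{X}$. For the inductive step, group the composite in $\cat B$ as $w \cdot v \cdot u$ with $u = u_{i-1}\cdots u_0$, $v = u_i$ and $w = u_n \cdots u_{i+1}$, and apply Lemma~\ref{lem:pseudo-push-iso} to $f\from X \to Y$ with this decomposition. This yields a fiber isomorphism
\[
  \alpha_i \from \push{(u_i \cdots u_0)}{X} \longrightarrow \push{u_i}{\push{(u_{i-1}\cdots u_0)}{X}}
\]
above $\id{A_i}$, together with the compatibility between the cocartesian tower leaving $X$ and the single cocartesian morphism $\cocart{u_i \cdots u_0}{X}$. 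The induction hypothesis furnishes $\phi_{i-1}\from \push{(u_{i-1}\cdots u_0)}{X} \to \push{u_{i-1}}{\cdots \push{u_0}{X}}$ making the analogous diagram for the truncated chain commute; set $\phi = \push{u_i}{\phi_{i-1}} \circ \alpha_i$. Pasting the diagram from Lemma~\ref{lem:pseudo-push-iso} at the current step with the image under $\push{u_i}{(-)}$ of the diagram at stage $i - 1$ produces the upper half of the full diagram.

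Dually, iterating Lemma~\ref{lem:pseudo-pull-iso} in the variable $j$ yields an isomorphism $\psi\from \pull{(u_n \cdots u_{n-j+1})}{Y} \to \pull{u_{n-j+1}}{\cdots \pull{u_n}{Y}}$ together with the commutativity of the lower half of the diagram.

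It then remains to verify the central equality $h \circ \phi = \psi \circ k$, whose two sides share source and target and both lie above $u_{n-j} \cdots u_{i+1}$. Pre-composing either arrow with $\cocart{u_i \cdots u_0}{X}$ and post-composing with the iterated cartesian chain $\pull{u_{n-j+1}}{\cdots \pull{u_n}{Y}} \to Y$, both expressions collapse to $f$: on the side of $h\circ \phi$ this uses the defining property of $\phi$ (which turns $\cocart{u_i \cdots u_0}{X}$ into the composite cocartesian chain) together with the defining property of $h$, and on the side of $\psi \circ k$ it uses the defining property of $\psi$ (which turns the composite cartesian chain into $\cart{u_n \cdots u_{n-j+1}}{Y}$) together with the defining property of $k$. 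The uniqueness clause of cocartesianness applied to $\cocart{u_i \cdots u_0}{X}$ now forces $h \circ \phi = \psi \circ k$, as desired.

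The main obstacle is the bookkeeping: at each inductive step one must check that every arrow produced lies above the intended map in $\cat B$, that the numerous triangles glue together as claimed, and that the final uniqueness argument really has access to a (co)cartesian morphism through which both candidates for the central arrow must factor. Each such verification is an instance of the uniqueness property of a (co)cartesian morphism, so the argument is essentially mechanical once the correct grouping of the composite $u_n \cdots u_0$ has been selected at each stage.
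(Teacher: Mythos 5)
Your proof is correct and is essentially the same approach the paper intends; the paper itself gives no argument beyond the phrase ``by applying the previous lemmas multiple times,'' and your induction on $i$ (and dually $j$), chaining the pseudo-functoriality isomorphisms $\alpha_i$ via $\phi = \push{u_i}(\phi_{i-1})\circ\alpha_i$ and then pinning down $h\phi = \psi k$ by the (co)cartesian uniqueness clauses, is exactly the natural formalisation of that remark. One small imprecision worth flagging: at the final step you assert that cocartesianness of $\cocart{u_i\cdots u_0}{X}$ alone forces $h\phi = \psi k$, but from the fact that both arrows become $f$ after pre-composing with $\cocart{u_i\cdots u_0}{X}$ \emph{and} post-composing with the cartesian chain, you must first peel off the cartesian chain (by repeated cartesian uniqueness) to conclude $h\phi\,\cocart{u_i\cdots u_0}{X} = \psi k\,\cocart{u_i\cdots u_0}{X}$, and only then invoke cocartesian uniqueness; both dual universal properties are needed, as you in fact hint in your closing paragraph.
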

We will extensively use this corollary when $i+j = n$. Indeed, in that
case $h,k,\phi,\psi$ all are in the same fiber $\fiber{\cat E}{A_i}$
and then $h$ and $k$ are isomorphic as arrows in that fiber. Every
property on $h$ that is invariant by isomorphism of arrows will still
hold on $k$, and conversely.

\subsection{Weak factorization systems}%
\label{subsec:wfs-intertwined}%

In any category $\cat M$, we denote $j \weakorth q$, and we say that
$j$ has the left lifting property relatively to $q$ (or that $q$ has
the right lifting property relatively to $j$), when for any
commutative square of the form
\begin{displaymath}
  \begin{tikzcd}
    A \ar[r] \ar[d,"j" swap] & C \ar[d,"q"] \\
    B \ar[r] & D
  \end{tikzcd}
\end{displaymath}
there exists a morphism $h \from B \to C$, making
the two triangles commute in the following diagram:
\begin{displaymath}
  \begin{tikzcd}
    A \ar[r] \ar[d,"j" swap] & C \ar[d,"q"] \\
    B \ar[r] \ar[ur,"h" description] & D
  \end{tikzcd}
\end{displaymath}
Such a morphism~$h$ is called a \emph{lift} of the original commutative square.

\medbreak

A \define{weak factorization system} on a category $\cat M$ is the
data of a couple $(\class L,\class R)$ of classes of arrows in
$\cat M$ such that
\begin{displaymath}
  \class L = \{ j : \forall q \in R, j\weakorth q\}
  \quad \text{and} \quad
  \class R = \{ q : \forall j \in L, j\weakorth q\}
\end{displaymath}
and such that every morphism $f$ of $\cat M$ may be factored as $f=qj$
with $j\in \class L,q\in \class R$. The elements of $\class L$ are
called the \define{left maps} and the elements of $\class R$ the
\define{right maps} of the factorization system.

Let now $\cat M$ and $\cat N$ be categories with both a factorization
system. Then an adjunction
$L \from \cat M \rightleftarrows \cat N \cofrom R$ is said to be
\define{wfs-preserving} if the left adjoint $L$ preserves the left
maps, or equivalently if the right adjoint $R$ respects the right
maps.

\medskip

As a key ingredient in the proof of our main result, the following
lemma deserves to be stated fully and independently. It explains how
to construct a weak factorization system on the total category of a
Grothendieck bifibration, given that the basis and fibers all have one
in a way that the adjunctions arising from the bifibration are
wfs-preserving.

\begin{lemma}[Stanculescu]%
  \label{lem:stan-lemma}%
  Let $\pi \from \cat F \to \cat C$ be a Grothendieck bifibration with
  weak factorization systems $(\class L_C,\class R_C)$ on each fiber
  $\fiber{\cat F}C$ and $(\class L, \class R)$ on $\cat C$. If the
  adjoint pair $(\push u,\pull u)$ is a wfs-adjunction for every
  morphism $u$ of $\cat C$, then there is a weak factorization system
  $(\class L_{\cat F},\class R_{\cat F})$ on $\cat F$ defined by
  \begin{displaymath}
    \begin{aligned}
      \class L_{\cat F} &= \{ f \from X \to Y \in \cat F : \pi(f) \in
      \class L , \pushfact f \in \class L_{\pi Y} \},
      \\
      \class R_{\cat F} &= \{ f \from X \to Y \in \cat F : \pi(f) \in
      \class R , \pullfact f \in \class R_{\pi X} \}
    \end{aligned}
  \end{displaymath}
\end{lemma}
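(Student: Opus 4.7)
My plan is to establish the three defining properties of a weak factorization system on $\cat F$: existence of factorizations, the lifting $\class L_{\cat F}\weakorth \class R_{\cat F}$, and the characterization of $\class L_{\cat F}$ and $\class R_{\cat F}$ as the classes mutually determined by $\weakorth$. The underlying strategy is to use (co)cartesian lifts to transport combinatorics either down to the base $\cat C$ or into a single fiber, where the given weak factorization systems do the work.

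For the factorization property, given $f\from X\to Y$ above $u\from A\to B$, I first factor $u=qj$ in the base with $j\from A\to Z \in\class L$ and $q\from Z\to B \in\class R$. The universal properties of $\cocart j X$ and $\cart q Y$ then provide a unique $g\from \push j X\to \pull q Y$ in the fiber $\fiber{\cat F}Z$ making $f=\cart q Y\circ g\circ \cocart j X$. Factoring $g=\ell \circ k$ in the wfs of $\fiber{\cat F}Z$ with $k\in\class L_Z$ and $\ell\in\class R_Z$ yields an intermediate object $M$; the left half $k\circ \cocart j X\from X\to M$ projects to $j\in\class L$ and has $\pushfact{(-)}$-component exactly $k$ by the cocartesian universal property, so lies in $\class L_{\cat F}$. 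Dually the right half lies in $\class R_{\cat F}$.

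For the lifting property, take a square in $\cat F$ with $j \in\class L_{\cat F}$ from $A_1$ to $A_2$ and $q\in\class R_{\cat F}$ from $B_1$ to $B_2$, sitting above $u\in\class L$ and $v\in\class R$ in $\cat C$. Since $u\weakorth v$, the base square admits a diagonal $w\from \pi A_2\to \pi B_1$. Using (co)cartesian factorizations on the horizontal maps of the original square (through $\push u A_1$ and $\pull v B_2$) and then pushing along $\cocart w {-}$, the problem reduces to a lifting problem entirely inside the single fiber $\fiber{\cat F}{\pi B_1}$:
\begin{displaymath}
  \begin{tikzcd}
    \push w \push u A_1 \ar[r] \ar[d,"\push w(\pushfact j)"swap] & B_1 \ar[d,"\pullfact q"] \\
    \push w A_2 \ar[r] & \pull v B_2
  \end{tikzcd}
\end{displaymath}
Here $\pullfact q\in \class R_{\pi B_1}$ by assumption, and $\push w(\pushfact j) \in \class L_{\pi B_1}$ because $\pushfact j\in\class L_{\pi A_2}$ and the adjunction $(\push w,\pull w)$ is wfs-preserving, so $\push w$ sends left maps to left maps. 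A diagonal supplied by the fiber wfs then transports back, via $\cocart w {A_2}$, to the required lift in $\cat F$.

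Finally, closure under retracts of $\class L_{\cat F}$ and $\class R_{\cat F}$ — needed for the retract argument that upgrades factorization and lifting to the full characterization of the two classes — is checked componentwise: $\pi$ preserves retracts and $\class L,\class R$ are retract-closed in $\cat C$, while a short diagram chase with the (co)cartesian universal properties shows that the $\pushfact{(-)}$ and $\pullfact{(-)}$ components of a retract are themselves retracts in the ambient fiber, hence remain in $\class L_Z$ or $\class R_Z$. The main obstacle is the lifting step above, where reducing to a single-fiber problem and preserving the left-map status of $\push w(\pushfact j)$ is precisely where the wfs-adjoint hypothesis is essential; everything else amounts to bookkeeping with cartesian and cocartesian universal properties.
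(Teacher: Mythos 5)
Your factorization step is essentially identical to the paper's, and your lifting step is the same idea up to a dual choice of direction: you push the left-hand map forward along the base lift $w$ using $\push w$ and wfs-preservation of the left adjoint, whereas the paper pulls the right-hand map back along the base lift using $\pull h$ and wfs-preservation of the right adjoint. Both land you in a single fiber and both invoke the wfs-adjunction hypothesis in the same essential way, so this is a cosmetic difference.

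Where you genuinely diverge is the final step, where you invoke the retract argument and therefore need to verify that $\class L_{\cat F}$ and $\class R_{\cat F}$ are closed under retracts. The paper instead proves directly that a map with the left lifting property against $\class R_{\cat F}$ has the two defining properties of $\class L_{\cat F}$: it tests $\pushfact j$ against in-fiber squares (using that any right map in the fiber $\fiber{\cat F}{\pi Y}$ is also in $\class R_{\cat F}$) and tests $\pi(j)$ against base squares (by pulling back along the right map $q$ to a cartesian morphism in $\class R_{\cat F}$). This direct route avoids retracts altogether. Your route is valid in principle, but it is not shorter, and it introduces a subtlety that your sketch glosses over.

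That subtlety is the gap. You claim that for a retract $f'\from X'\to Y'$ of $f\from X\to Y$, the component $\pushfact{f'}$ is ``a retract of $\pushfact f$ in the ambient fiber.'' But $\pushfact{f'}$ lives in $\fiber{\cat F}{\pi Y'}$ and $\pushfact f$ lives in $\fiber{\cat F}{\pi Y}$, which are distinct categories in general, so the statement makes no sense as written. What is true, after a nontrivial diagram chase, is that $\pushfact{f'}$ is a retract of $\push{\tau'}(\pushfact f)$ inside $\fiber{\cat F}{\pi Y'}$, where $\tau' = \pi(b')$ is the retraction half of the base retract. To then conclude that $\pushfact{f'}\in\class L_{\pi Y'}$ you must also know that $\push{\tau'}(\pushfact f)$ is a left map, which is a second, independent use of the wfs-preservation hypothesis --- one your sketch does not flag. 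So the retract-closure part is not mere ``bookkeeping''; it reuses the main hypothesis in an essential way and must be handled with care about which fiber you are in.
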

For the proof in \cite[2.2]{stanculescu:bifib-model} is based on a
different (yet equivalent) definition of weak factorization systems,
here is a proof in our language for readers's convinience.
\begin{proof}
  Let us begin with the easy part, which is the factorization
  property. For a map $f\from X \to Y$ of $\cat F$, one gets a
  factorization $\pi(f) = r \ell$ in $\cat C$ with
  $\ell \from \pi X \to C \in \class L$ and
  $r\from C \to \pi Y\in \class R$. It induces a fiber morphism
  $\push \ell X \to \pull r Y$ in $\fiber{\cat F}C$ that we can in
  turn factor as $r_C\ell_C$ with $\ell_C \in \class L_C$ and
  $r_C \in \class R_C$.
  \begin{displaymath}
    \begin{tikzcd}
      X \rar \drar[densely dashed,"\tilde \ell",swap]
      & \push \ell X \dar["\ell_C"] & \\
      & \cdot \dar["r_C",swap] \drar[densely dashed,"\tilde r"] & \\
      & \pull r Y \rar & Y
    \end{tikzcd}
  \end{displaymath}
  Then the wanted factorization of $f$ is $\tilde r \tilde \ell$ where
  $\tilde r$ is the morphism of $\cat F$ such that $\pi(\tilde r) = r$
  and $\pullfact {\tilde r} = r_C$, and $\tilde \ell$ the one such
  that $\pi (\tilde \ell) = \ell$ and
  $\pushfact{\tilde \ell} = \ell_C$. This is summed up in the previous
  diagram.

  Lifting properties follow the same kind of pattern: take the image
  by $\pi$ and do the job in $\cat C$, then push and pull in $\cat F$
  so that you end up in a fiber when everything goes smoothly. Take a
  map ${j \from X \to Y} \in \class L_{\cat F}$ and let us show that
  it lift against elements of $\class R_{\cat F}$. Consider in
  $\cat F$ a commutative square with the map $q$ on the right in
  $\class R_{\cat F}$:
  \begin{displaymath}
    \begin{tikzcd}
      X \dar["j",swap] \rar["f"] & V \dar["q"] \\
      Y \rar["g",swap] & W
    \end{tikzcd}
  \end{displaymath}
  By definition, $\pi (j) \in \class L$ has the left lifting property
  against $\pi(q)$, hence a lift $h$:
  \begin{displaymath}
    \begin{tikzcd}
      \pi X \dar["\pi(j)",swap] \rar["\pi(f)"] & \pi V \dar["\pi(q)"] \\
      \pi Y \rar["\pi(g)",swap] \urar["h"] & \pi W
    \end{tikzcd}
  \end{displaymath}
  Now filling the original square with $\tilde h \from Y \to V$ above
  $h$ is equivalent to fill the following induced solid square in
  $\fiber{\cat F} {\pi Y}$:
  \begin{displaymath}
    \begin{tikzcd}
      \color{lightgray} X \drar["j",swap,lightgray]
      \ar[rrr,bend left,"f",lightgray] \rar[lightgray] &
      \cdot \dar["\pushfact j",swap] \rar
      & \cdot \dar["\pull h (\pullfact q)"] \rar[lightgray] &
      \color{lightgray} V \drar[lightgray,"q"] \dar[lightgray,"\pullfact q"] & \\
      & Y \rar \ar[rrr,bend right,"g",swap,lightgray] & \cdot \rar[lightgray]
      & \color{lightgray} \cdot \rar[lightgray] & \color{lightgray} W
    \end{tikzcd}
  \end{displaymath}
  But $\pushfact j \in \class L_{\pi Y}$, and $\pull h$ is the right
  adjoint of a wfs-preserving adjunction, hence maps the right map
  $\pullfact q$ of $\fiber{\cat F}{\pi V}$ to a right map in
  $\fiber{\cat F}{\pi Y}$: so there is such a filler.

  Conversely, if $j \from X \to Y$ in $\cat F$ has the left lifting
  property relatively to all maps of $\class R_{\cat F}$, then one has
  to show that it is in $\class L_{\cat F}$. Consider in
  $\fiber{\cat F}{\pi Y}$ a commutative square as
  \begin{displaymath}
    \begin{tikzcd}
      \color{lightgray} X \rar[lightgray,"\cocart {\pi(j)} X"] \drar["j",swap,lightgray]
      & \cdot \rar["f"] \dar["\pushfact j"] & Y' \dar["q"] \\
      & Y \rar["g",swap] & Y''
    \end{tikzcd}
    \qquad q \in \class R_{\pi Y}
  \end{displaymath}
  Then, because $q$ also is in $\class R_{\cat F}$, there is an
  $h \from Y \to Y'$ such that $g = qh$ and $hj = f\cocart{\pi(j)}
  X$. But then, $h\pushfact j$ and $f$ both are solution to the
  factorization problem of $j$ through the cocartesian arrow
  $\cocart{\pi(j)} X$, hence should be equal. Meaning $h$ is a filler
  of the original square in the fiber $\fiber{\cat F}{\pi Y}$. We
  conclude that $\pushfact j$ is a left map in its fiber. Now consider
  a commutative square in $\cat C$:
  \begin{displaymath}
    \begin{tikzcd}
      \pi X \dar["\pi(j)",swap] \rar["f"] & C \dar["q"] \\
      \pi Y \rar["g",swap] & D
    \end{tikzcd}
    \qquad q \in \class R
  \end{displaymath}
  It induced a commutative square in $\cat F$:
  \begin{displaymath}
    \begin{tikzcd}
      X \dar["j",swap] \rar & \pull q \push g Y \dar["\kappa"] \\
      Y \rar & \push g Y
    \end{tikzcd}
  \end{displaymath}
  Now the arrow on the right is cartesian above a right map, hence is
  in $\class R_{\cat F}$ by definition. So $j$ lift against it, giving
  us a filler $h \from Y \to \pull q \push g Y$ whose image
  $\pi(h) \from Y \to C$ fills the square in $\cat C$. We conclude
  that $\pi(j)$ is a left map of $\cat C$. In the end,
  $j \in \class L_{\cat F}$ as we wanted to show.

  Similarly, we can show that $\class R_{\cat F}$ is exactly the class
  of maps that have the right lifting property against all maps of
  $\class L_{\cat F}$.
\end{proof}

\subsection{Intertwined weak factorization system and model
  categories}%
\label{subsec:lim-model}%

Quillen introduced model categories in
\cite{quillen:homotopical-algebra} as categories with sufficient
structural analogies with the category of topological spaces so that a
sensible notion of {\em homotopy between maps} can be provided. Not
necessarily obvious at first sight are the redundancies of Quillen's
definition. Even though intentionally important in the conceptual
understanding of a model category, the extra checkings required can
make a simple proof into a painful process. To ease things a little
bit, this part is dedicated to extract the minimal definition of a
model category at the cost of trading topological intuition for
combinatorial comfort.

\medskip

Recall the definition of a model structure.
\begin{definition}%
  \label{def:model-category}%
  A model structure on a category $\cat M$ is the data of three
  classes of maps $\class C$, $\class W$, $\class F$ such that:
  \begin{enumerate}[label=(\roman*)]
  \item $\class W$ has the 2-out-of-3 property, i.e.\ if two elements
    among $\{f,g,gf\}$ are in $W$ for composable morphisms $f$ and
    $g$, then so is the third,
  \item $(\class C,\class W \cap \class F)$ and
    $(\class C \cap \class W,\class F)$ both are weak fatorization
    systems.
  \end{enumerate}
\end{definition}
The morphism in $\class W$ are called the \define{weak equivalences},
those in $\class C$ the \define{cofibrations} and those in $\class F$
the \define{fibrations.} Given the role played by the two classes
$\class C \cap W$ and $\class F \cap \class W$, we also give names to
their elements: a fibration (respectively a cofibration) which is also
a weak equivalence is called an \define{acyclic fibration}
(respectively an \define{acyclic cofibration}).

\begin{remark}
  It is crucial for the rest of the document to remark that there is
  some redundancy in the previous definition: in a model structure,
  any two of the three classes $\class C,\class W,\class F$ determine
  the last one. Indeed, knowing of $\class C$ and $\class W$ gives us
  $\class F$ as the class of morphisms having the right lifting
  property relatively to every element of $\class C \cap \class
  W$. Dually, $\class C$ is given as the class of morphism having the
  left lifting property relatively to every element of
  $\class W \cap \class F$.

  Finally, and it is the relevant case for the purpose of this
  article, the weak equivalences are exactly those morphisms that we
  can write $qj$ where $j$ is an acyclic cofibration and $q$ is an
  acyclic fibration. The first inclusion
  $\{qj: q \in \class F \cap \class W, j \in \class C \cap \class W \}
  \subseteq \class W$ is a direct consequence of the 2-out-of-3
  property. The converse inclusion is given by applying one of the two
  weak factorization systems and then using the 2-out-of-3 property:
  if $w\in \class W$, it is writable as $w=qj$ with
  $q \in \class F, j\in \class C \cap \class W$; but then $w$ and $j$
  being a weak equivalence, $q$ also is. Hence the conclusion.
\end{remark}

Recall also the notion of morphisms between model structures: a
\define{Quillen adjunction} between two model structures $\cat M$ and
$\cat N$ is an adjunction
${L \from \cat M \rightleftarrows \cat N \cofrom R}$ which is
wfs-preserving for both the weak factorization system (acyclic
cofibrations, fibrations) and the (cofibrations, acyclic fibrations)
one.

Finally, to conclude those remainders about model structures, let us
introduce some new vocabulary.
\begin{definition}[Homotopically conservative functor]
  A functor $F \from \cat M \to \cat N$ between model structures is
  said to be \define{homotopically conservative} if it preserves and
  reflects weak equivalences.
\end{definition}

\begin{remark}
  To get one's head around this terminlogy, let us make two
  observations:
  \begin{enumerate}[label=(\arabic*)]
  \item If $\cat M$ and $\cat N$ are endowed with the trivial model
    structure, in which weak equivalences are isomorphisms and
    cofibrations and fibrations are all morphisms, then the notion
    boils down to the usual conservative functors.
  \item Every functor $F \from \cat M \to \cat N$ preserving weak
    equivalences induces a functor
    $\Ho F \from \Ho {\cat M} \to \Ho {\cat N}$. Given that weak
    equivalences are saturated in a model category, homotopically
    conservative functors are exactly those $F$ such that $\Ho F$ is
    conservative as a usual functor.
  \end{enumerate}
\end{remark}

\medskip

Let us pursue with the following definition, apparently absent from
literature.
\begin{definition}
  A weak factorization system $(\class L_1,\class R_1)$ on a category
  $\cat C$ is \define{intertwined} with another
  $(\class L_2,\class R_2)$ on the same category when:
  \begin{displaymath}
    \class L_1 \subseteq \class L_2
    \qquad\text{ and }\qquad
    \class R_2 \subseteq \class R_1.
  \end{displaymath}
\end{definition}
The careful reader will notice that the properties
$\class L_1 \subseteq \class L_2$ and
$\class R_2 \subseteq \class R_1$ are actually equivalent to each
other, but the definition is more naturally stated in this way.
A similar notion is formulated by Shulman for orthogonal factorization systems,
in a blog post on the $n$-Category Caf\'e \cite{shulman:ncat-ternary}
with a brief mention at the end of a version for weak factorization systems. 
This is the only appearance of such objects known to us.

The similarity with the weak factorization systems of a model category
is immediately noticeable and in fact it goes further than a mere
resemblance, as indicated in the following two results.
\begin{proposition}
  Let $(\class L_1,\class R_1)$ together with
  $(\class L_2,\class R_2)$ form intertwined weak factorization
  systems on a category $\cat C$. Denoting
  $\class W = \class R_2 \circ \class L_1$, the following class
  identities hold:
  \begin{displaymath}
    \class L_1 = \class W \cap \class L_2, \qquad
    \class R_2 = \class W \cap \class R_1.
  \end{displaymath}
\end{proposition}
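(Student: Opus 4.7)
The plan is to prove the two identities by a standard retract argument, using the intertwining inclusions to enable a lift that exhibits a given map as a retract of a factorization-supplied one. I will do the proof for $\class L_1 = \class W \cap \class L_2$ in detail; the identity $\class R_2 = \class W \cap \class R_1$ is entirely dual.

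For the inclusion $\class L_1 \subseteq \class W \cap \class L_2$, first note that any $j \in \class L_1$ can be written as $j = \id \circ j$, and since identities lie in every class $\class R_i$, this shows $j \in \class R_2 \circ \class L_1 = \class W$. Together with the intertwining hypothesis $\class L_1 \subseteq \class L_2$, this gives the inclusion.

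For the converse $\class W \cap \class L_2 \subseteq \class L_1$, let $f \from A \to C$ be in $\class W \cap \class L_2$. By definition of $\class W$, factor $f = qj$ with $j \from A \to B$ in $\class L_1$ and $q \from B \to C$ in $\class R_2$. Because $f \in \class L_2$ and $q \in \class R_2$ we have $f \weakorth q$, so the commutative square
\begin{displaymath}
\begin{tikzcd}
A \ar[r,"j"] \ar[d,"f"swap] & B \ar[d,"q"] \\
C \ar[r,equal] \ar[ur,dashed,"h" description] & C
\end{tikzcd}
\end{displaymath}
admits a diagonal filler $h \from C \to B$ satisfying $hf = j$ and $qh = \id_C$. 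Consequently $f$ is a retract of $j$, as witnessed by the diagram
\begin{displaymath}
\begin{tikzcd}
A \ar[r,equal] \ar[d,"f"swap] & A \ar[r,equal] \ar[d,"j"] & A \ar[d,"f"] \\
C \ar[r,"h"swap] & B \ar[r,"q"swap] & C
\end{tikzcd}
\end{displaymath}
The left maps of any weak factorization system are closed under retracts (an immediate consequence of their characterization by the left lifting property), so from $j \in \class L_1$ we deduce $f \in \class L_1$, as required.

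The dual statement $\class R_2 = \class W \cap \class R_1$ is proved in the mirror-image way: for $f \in \class W \cap \class R_1$, factor $f = qj$ with $j \in \class L_1, q \in \class R_2$, observe that $j \weakorth f$ by intertwining ($j \in \class L_1 \subseteq \class L_2 \Rightarrow j \weakorth \class R_2$, and here we use instead $j \in \class L_1$ lifting against $f \in \class R_1$), and exhibit $f$ as a retract of $q$ via the resulting lift. No serious obstacle is expected; the only content of the proof is the retract argument, the key leverage being precisely the cross-lifting $\class L_1 \weakorth \class R_1$ and $\class L_2 \weakorth \class R_2$ made useable across the two systems by the intertwining inclusions.
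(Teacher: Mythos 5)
Your proof is correct and follows essentially the same route as the paper's: both obtain the section $h$ (the paper calls it $s$) of $q$ by lifting $f\in\class L_2$ against $q\in\class R_2$ in the square with $j$ on top and an identity on the bottom. The only difference is cosmetic at the last step — you exhibit $f$ as a retract of $j$ and invoke the standard fact that left classes of a weak factorization system are closed under retracts, whereas the paper inlines the same computation by directly composing the lift against an arbitrary $p\in\class R_1$ with the section $h$; unfolding the retract lemma in your version yields exactly the paper's argument.
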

\begin{proof}
  Let us prove the first identity only, as the second one is strictly
  dual. Suppose $f\from A \to B \in \class L_1$, then
  $f \in \class L_2$ by the very definition of intertwined weak
  factorization systems, and $f = \id B f \in \class W$, hence the
  first inclusion: $\class L_1 \subseteq \class W \cap \class
  L_2$.

  Conversely, take $f \in \class W \cap \class L_2$. Then in
  particular there exists $j\in \class L_1$ and $q \in \class R_2$
  such that $f = qj$. Put otherwise, the following square commutes:
  \begin{displaymath}
    \begin{tikzcd}
      A \ar[d,"f",swap] \ar[r,"j"] & C \ar[d,"q"] \\
      B \ar[r,equal] & B.
    \end{tikzcd}
  \end{displaymath}
  But $f$ is in $\class L_2$ and $q$ is in
  $\class R_2 \subseteq \class R_1$, hence a lift $s \from B \to C$
  such that $qs = \id B$ and $sf = j$. Now for any $p \in \class R_1$
  and any commutative square
  \begin{displaymath}
    \begin{tikzcd}
      A \ar[dd,"f"swap,bend right] \ar[d,"j"] \ar[r,"x"]
      & D \ar[dd,"p"] \\
      C \ar[d,"q"]& \\
      B \ar[r,"y"] & E
    \end{tikzcd}
  \end{displaymath}
  there is a lift $h \from C \to D$ taking advantage of $j$ having the
  left lifting property against $p$. Then $hs \from B \to D$ provides
  a lift showing that $f$ has the left lifting property against $p$:
  indeed $phs = yqs = y$ and $hsf = hsqj = hj = x$. Having the left
  lifting property against any morphism in $\class R_1$, the morphism
  $f$ ought to be in $\class L_1$, hence providing the reverse
  inclusion: $\class W \cap \class L_2 \subseteq \class L_1$.
\end{proof}

\begin{corollary}
  \label{cor:intertwined-model}%
  Let $(\class L_1,\class R_1)$ and $(\class L_2,\class R_2)$ form
  intertwined weak factorization systems on a category $\cat M$, and
  denote again $\class W = \class R_2 \circ \class L_1$. The category
  $\cat M$ has a model structure with weak equivalences $\class W$,
  fibrations $\class R_1$ and cofibrations $\class L_2$ if and only if
  $\class W$ has the 2-out-of-3 property.
\end{corollary}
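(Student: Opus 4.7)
The plan is to derive the corollary as a direct consequence of the preceding proposition. The forward direction is immediate: any model structure has weak equivalences satisfying 2-out-of-3, so there is nothing to check.

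For the converse, assume $\class W$ has the 2-out-of-3 property, and let us verify the two axioms of Definition~\ref{def:model-category} for $(\class L_2, \class W, \class R_1)$. Axiom~(i) is exactly our assumption. For axiom~(ii), I would invoke the class identities established in the previous proposition, namely $\class L_1 = \class W \cap \class L_2$ and $\class R_2 = \class W \cap \class R_1$. Substituting these into the pairs appearing in the model-category axioms yields
\begin{displaymath}
  (\class L_2, \class W \cap \class R_1) = (\class L_2, \class R_2)
  \qquad\text{and}\qquad
  (\class L_2 \cap \class W, \class R_1) = (\class L_1, \class R_1),
\end{displaymath}
both of which are weak factorization systems by the very hypothesis that $(\class L_1,\class R_1)$ and $(\class L_2,\class R_2)$ are intertwined weak factorization systems on $\cat M$. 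Hence axiom~(ii) holds automatically.

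There is no real obstacle: once the previous proposition is in hand, the corollary is a matter of substitution. The only subtlety worth emphasizing is conceptual rather than technical, namely that the proposition already performs all the combinatorial work (lifts and factorizations) needed to identify the acyclic cofibrations and acyclic fibrations with $\class L_1$ and $\class R_2$ respectively, so that the only genuine extra input required to obtain a model structure on $\cat M$ is the 2-out-of-3 property of $\class W$.
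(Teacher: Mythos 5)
Your argument is correct and is exactly the intended one: the paper states this as a corollary without a written-out proof, relying on the reader to make precisely the substitution you make using the class identities $\class L_1 = \class W \cap \class L_2$ and $\class R_2 = \class W \cap \class R_1$ from the preceding proposition. Nothing more to add.
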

Of course in that case, we also get the class of acyclic cofibrations
as $\class L_1$ and the class of acyclic fibrations as $\class R_2$.

\medskip

So there it is: we shreded apart the notion of a model structure to
the point that what remains is the pretty tame notion of intertwined
factorization systems $(\class L_1,\class R_1)$ and
$(\class L_2,\class R_2)$ such that $\class R_2\circ\class L_1$ has
the 2-out-of-3 property. But it has the neat advantage to be easily
checkable, especially in the context of formal constructions, as it is
the case in this paper. It also emphasizes the fact that Quillen
adjunctions are really the right notion of morphisms for intertwined
weak factorization systems and have {\em a priori} nothing to do with
weak equivalences. We shall really put that on a stand because
everything that follows in the main theorem can be restated with mere
intertwined weak factorization systems in place of model structures
and it still holds: in fact it represents the easy part of the theorem
and all the hard core of the result resides in the 2-out-of-3
property, as usually encountered with model structures.

\section{Quillen bifibrations}
\label{sec:quillen-bifib}

Recall from the introduction that a \define{Quillen bifibration} is a
Grothendieck bifibration $p \from \cat E \to \cat B$
between categories with model structures such that:
\begin{enumerate}[label=(\roman*)]
\item the functor~$p$ is both a left and right Quillen functor,
\item the model structure on $\cat E$ restricts to a model structure
on the fiber $\fiber{\cat E}A$, for every object $A$ of the category~$\cat B$.
\end{enumerate}
In this section, we show that in a Quillen bifibration the model
structure on the basis~$\cat B$ and on every fiber $\cat E_A$
determines the original model structure on the total category $\cat E$.
In the remainder of this section, we fix a Quillen bifibration
$p\from \cat E \to \cat B$.

\begin{lemma}
  \label{lem:quillen-bifib-push-pull-are-quillen}%
  For every morphism $u \from A \to B$ in $\cat B$, the adjunction
  $\push u \from \fiber{\cat E}A \rightleftarrows \fiber{\cat E}B
  \cofrom \pull u$ is a Quillen adjunction.
\end{lemma}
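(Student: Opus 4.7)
The goal is to show that $\push u$ is a left Quillen functor, i.e.\ that it sends cofibrations and acyclic cofibrations in $\fiber{\cat E}A$ to cofibrations and acyclic cofibrations in $\fiber{\cat E}B$. By the restriction hypothesis of a Quillen bifibration, a morphism in a fiber is a (acyclic) cofibration there exactly when it is a (acyclic) cofibration in $\cat E$; the same holds for fibrations. So both the hypothesis on $k$ and the desired conclusion about $\push u(k)$ can be tested inside $\cat E$, which is where we will work.

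My plan is to verify the relevant lifting property rather than trying to show $\push u(k)$ is a cofibration of $\cat E$ directly (which would be delicate, since the cocartesian lift $\cocart u{X}$ is not a priori a cofibration). Let $k\from X\to X'$ be a cofibration in $\fiber{\cat E}A$ and $q\from Y\to Y'$ an acyclic fibration in $\fiber{\cat E}B$; I want to fill any commutative square formed by $\push u(k)$ on the left and $q$ on the right in $\fiber{\cat E}B$. Precomposing the top and bottom arrows of this square by the cocartesian morphisms $\cocart u X$ and $\cocart u{X'}$ produces, thanks to the defining square of $\push u(k)$, a commutative square in $\cat E$ whose left edge is $k$ and whose right edge is $q$. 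Since $k$ is a cofibration of $\cat E$ and $q$ is an acyclic fibration of $\cat E$, there is a lift $h\from X'\to Y$ in $\cat E$.

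Applying $p$ to the commutativity relations $h\circ k=f\circ\cocart u X$ and $q\circ h=g\circ\cocart u{X'}$ (with $f,g$ the top and bottom edges of the original square) shows $p(h)=u$, so by the universal property of the cocartesian morphism $\cocart u{X'}$ there is a unique $\tilde h\from \push u{X'}\to Y$ in $\fiber{\cat E}B$ with $\tilde h\circ\cocart u{X'}=h$. Two applications of the uniqueness clause of that same universal property (for $\cocart u X$) then give $\tilde h\circ\push u(k)=f$ and $q\circ\tilde h=g$, exhibiting $\tilde h$ as the desired filler in the fiber. Hence $\push u(k)$ is a cofibration of $\fiber{\cat E}B$. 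The same argument, with $k$ acyclic cofibration and $q$ plain fibration, shows that $\push u$ preserves acyclic cofibrations, which completes the proof that $(\push u,\pull u)$ is a Quillen adjunction. The only subtle point — which I do not expect to be a real obstruction — is the book-keeping ensuring that the fiber filler one obtains from the universal property actually closes both triangles of the original square, and this is handled by the uniqueness clause of the cocartesian morphism.
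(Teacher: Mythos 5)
Your argument is correct and is essentially the same as the paper's: extend the lifting square by precomposing with the cocartesian maps $\cocart u X$ and $\cocart u{X'}$, lift $k$ against $q$ in $\cat E$ (which is justified precisely by the restriction identities you state at the outset), factor the lift through $\cocart u{X'}$, and close both triangles by the uniqueness clause of the cocartesian universal property. The only slip is cosmetic: the two uniqueness invocations use $\cocart u X$ for the triangle $\tilde h\circ\push u(k)=f$ and $\cocart u{X'}$ (not $\cocart u X$) for $q\circ\tilde h=g$.
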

\begin{proof}
  Let $f\from X\to Y$ be a cofibration in the fiber $\fiber{\cat
    E}A$. We want to show that the morphism $\push u (f)$ of
  $\fiber{\cat E}B$ is a cofibration. Take an arbitrary acyclic
  fibration $q \from W \to Z$ in $\fiber{\cat E} B$ and a commutative
  square in that fiber:
  \begin{displaymath}
    \begin{tikzcd}
      \push u X \rar["g"] \dar["\push u (f)"swap] & W \dar["q"]
      \\
      \push u Y \rar["g'"swap] & Z
    \end{tikzcd}
  \end{displaymath}
  We need to find a lift $h \from \push u Y \to W$ making the diagram
  commutes, i.e.\ such that $qh = g'$ and $h\push u(f) = g$. Let us
  begin by precomposing with the square defining $\push u(f)$:
  \begin{displaymath}
    \begin{tikzcd}
      X \rar["\cocartz"] \dar["f"swap] & \push u X \rar["g"]
      \dar["\push u (f)"swap] & W \dar["q"]
      \\
      Y \rar["\cocartz"swap] & \push u Y \rar["g'"swap] & Z
    \end{tikzcd}
  \end{displaymath}
  As a cofibration, $f$ has the left lifting property against $q$,
  providing a map ${k\from Y \to W}$ that makes the following commute:
  \begin{displaymath}
    \begin{tikzcd}[row sep=large]
      X \rar["\cocartz"] \dar["f"swap] & \push u X \rar["g"]
      \dar[lightgray,"\push u (f)"{swap,near start}] & W \dar["q"]
      \\
      Y \rar["\cocartz"swap] \ar[urr,"k"{near start},crossing over] &
      \push u Y \rar["g'"swap] & Z
    \end{tikzcd}
  \end{displaymath}
  Now we use the cocartesian property of
  $\cocart u Y \from Y \to \push u Y$ on $k$, to find a map
  $h\from \push u Y \to W$ above the identity $\id B$ such that
  $h\cocart u Y = k$. All it remains to show is that $qh = g'$ and
  $h\push u (f) = g$. Notice that both $qh$ and $g'$ answer to the
  problem of finding a map $x \from \push u Y \to Z$ above $\id B$
  such that $x\cocart u Y = qk$: hence, by the unicity condition in
  the cocartesian property of $\cocart u Y$, they must be
  equal. Similarly, $h\circ\push u (f)$ and $g$ solve the problem of
  finding $x \from \push u X \to W$ above $\id B$ such that
  $x\cocart u X = kf$: the cocartesian property of $\cocart u X$
  allows us to conclude that they are equal. In the end, $\push u(f)$
  has the left lifting property against every acyclic fibration of
  $\fiber{\cat E}B$, so it is a cofibration. We prove dually that the
  image $\pull u f$ of a fibration $f$ in $\fiber{\cat E} B$ is a
  fibration of the fiber $\fiber{\cat E} A$.
\end{proof}

\begin{lemma}
  \label{lem:quill-bifib-cocart-above-cof}%
  A cocartesian morphism in $\cat E$ above a (acyclic) cofibration of
  $\cat B$ is a (acyclic) cofibration.
\end{lemma}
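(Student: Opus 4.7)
The plan is to verify the lifting characterization of (acyclic) cofibrations directly, using $p$ to transport the lifting problem to the basis $\cat B$, solving it there, and then lifting the solution back to $\cat E$ via the cocartesian property.

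Let $u \from A \to B$ be a cofibration in $\cat B$ and let $\cocart u X \from X \to \push u X$ be a cocartesian morphism above $u$. To see that $\cocart u X$ is a cofibration in $\cat E$, I will check that it has the left lifting property against every acyclic fibration $q \from V \to W$ of $\cat E$. Given a commutative square with $\cocart u X$ on the left and $q$ on the right, apply $p$: one obtains a square in $\cat B$ with $u$ on the left and $p(q)$ on the right, and $p(q)$ is an acyclic fibration since $p$ is a Quillen functor. Hence there is a lift $v \from B \to p(V)$ in $\cat B$. Now the top morphism $X \to V$ of the original square sits above $p(v) \circ u$, so by the cocartesian property of $\cocart u X$ there is a unique arrow $\tilde g \from \push u X \to V$ above $v$ whose precomposition with $\cocart u X$ recovers the top side. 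One triangle commutes by construction; the bottom triangle follows from the uniqueness clause in the cocartesian property applied to the arrows $q \circ \tilde g$ and $g'$ from $\push u X$ to $W$, which both lie above $p(q) \circ v$ and agree after precomposition with $\cocart u X$ (using the commutativity of the starting square).

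For the acyclic case, assume in addition that $u$ is a weak equivalence. The argument above already shows that $\cocart u X$ is a cofibration, so it suffices to prove it is acyclic. Since $(\totalcof{\Ecategory} \cap \totalweak{\Ecategory}, \totalfib{\Ecategory})$ is a weak factorization system on $\cat E$, it is enough to show that $\cocart u X$ has the left lifting property against every fibration $q \from V \to W$ of $\cat E$. The very same argument applies verbatim, except that now the induced lifting problem in $\cat B$ pairs an acyclic cofibration $u$ with a fibration $p(q)$, which admits a lift by the model structure on $\cat B$; the cocartesian factorization then produces the desired lift in $\cat E$.

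I do not anticipate a real obstacle here: both parts rely on the same two ingredients, namely that $p$ transports lifting problems in $\cat E$ to lifting problems in $\cat B$ (because $p$ is a Quillen functor) and that the cocartesian property of $\cocart u X$ converts a lift downstairs into a lift upstairs. The only point deserving care is the commutativity of the lower triangle in the lifted square, which must be established through the uniqueness clause of cocartesianness rather than by direct computation.
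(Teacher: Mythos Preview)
Your argument is correct and follows essentially the same route as the paper: apply $p$ to the lifting square, solve the resulting problem in $\cat B$ using that $p$ preserves (acyclic) fibrations, and then use the universal property of the cocartesian arrow (existence for the upper triangle, uniqueness for the lower) to produce the lift in $\cat E$. Two minor slips to clean up: the top arrow lies above $v\circ u$, not $p(v)\circ u$, and the notation $\totalcof{\cat E},\totalweak{\cat E},\totalfib{\cat E}$ is only introduced later in the paper---here you should simply invoke the model structure already assumed on $\cat E$.
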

\begin{proof}
  Let $f\from X \to Y$ be cocartesian above a cofibration
  $u \from A \to B$ in $\cat B$.
  Given a commutative square of $\cat E$
  \begin{equation}
    \label{eq:first-square}%
    \begin{tikzcd}
      X \dar["f"{swap}] \rar["g"] & W \dar["q"]
      \\
      Y \rar["g'"{swap}] & Z
    \end{tikzcd}
  \end{equation}
  with $q$ an acyclic fibration, we can take its image in $\cat B$:
  \begin{displaymath}
    \begin{tikzcd}
      A \dar["u"{swap}] \rar & pW \dar["p(q)"]
      \\
      B \rar & pZ
    \end{tikzcd}
  \end{displaymath}
  Since $u$ is a cofibration and $p(q)$ an acyclic fibration, there
  exists a morphism~$h \from B \to pW$ making the expected diagram commute:
    \begin{displaymath}
    \begin{tikzcd}
      A \dar["u"{swap}] \rar & pW \dar["p(q)"]
      \\
      B \rar \arrow[ru,"h" description] & pZ
    \end{tikzcd}
  \end{displaymath}
  Because $f$ is cocartesian, we know that there exists a (unique) map
  $\tilde h \from Y \to W$ above $h$ making the diagram below commute:
  \begin{displaymath}
    \begin{tikzcd}
      X \dar["f"{swap}] \rar["g"] & W
      \\
      Y \urar["\tilde h"{swap}] &
    \end{tikzcd}
  \end{displaymath}
  For the morphism~$\tilde h$ to be a lift in the first commutative
  square~(\ref{eq:first-square}), there remains to show
  that~$q\tilde h = g'$. Because $\tilde h$ is above $h$ and
  $p(q)h = p(g')$, we have that the composite~$q\tilde h$ is above
  $g'$. Moreover $q\tilde h f = q g = g' f$. Using the uniqueness
  property in the universal definition of cocartesian maps, we deduce
  $q\tilde h = g'$. We have just shown that the cocartesian
  morphism~$f$ is weakly orthogonal to every acyclic fibration, and we
  thus conclude that $f$ is a cofibration. The case of cocartesian
  morphisms above acyclic cofibrations is treated in a similar way.
\end{proof}
The same argument establishes the dual statement:
\begin{lemma}
  \label{lem:quill-bifib-cart-above-fib}%
  A cartesian morphism in $\cat E$ above a (acyclic) fibration of
  $\cat B$ is a (acyclic) fibration.
\end{lemma}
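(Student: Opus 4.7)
The plan is to run exactly the dual of the argument that proved Lemma~\ref{lem:quill-bifib-cocart-above-cof}, replacing cocartesian morphisms by cartesian morphisms, cofibrations by fibrations, and acyclic fibrations by acyclic cofibrations. The key ingredient is still that $p$ is a Quillen functor in the strong (two-sided) sense, so it preserves acyclic cofibrations in $\cat E$ as well as fibrations; hence every lifting problem in $\cat E$ with a cartesian morphism on the right can be pushed down to a lifting problem in $\cat B$ of the same shape, and then pulled back up via the universal property of cartesian maps.

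Concretely, let $f\from X\to Y$ be a cartesian morphism above a fibration $u\from A\to B$, and consider a commutative square in $\cat E$
\begin{displaymath}
  \begin{tikzcd}
    W \dar["j"swap] \rar["g"] & X \dar["f"] \\
    Z \rar["g'"swap] & Y
  \end{tikzcd}
\end{displaymath}
with $j$ an acyclic cofibration. Applying $p$ produces a square in $\cat B$ whose left map $p(j)$ is an acyclic cofibration (because $p$ is left Quillen) and whose right map $u$ is a fibration; therefore the model structure on $\cat B$ provides a diagonal filler $h\from pZ\to A$. This is the step analogous to invoking the lifting property in $\cat B$ in the proof of Lemma~\ref{lem:quill-bifib-cocart-above-cof}.

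Next, I would use the cartesian property of $f$ on the morphism $g'\from Z\to Y$, which sits above $u\circ h$: this yields a unique $\tilde h\from Z\to X$ above $h$ with $f\tilde h = g'$. It remains to check that $\tilde h j = g$. Both composites lie above $h\circ p(j)$, which coincides with $p(g)$ by construction of $h$; moreover $f\tilde h j = g' j = f g$, so both $\tilde h j$ and $g$ answer the same factorization problem through the cartesian morphism $f$. The uniqueness clause in the definition of cartesian morphisms forces $\tilde h j = g$, giving the desired diagonal lift. The acyclic case is identical, with "$u$ fibration and $j$ acyclic cofibration" replaced by "$u$ acyclic fibration and $j$ cofibration", using the other weak factorization system of $\cat B$ to produce~$h$.

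There is no real obstacle: the only thing to be careful about is the bookkeeping of which universal property (that of the cartesian morphism or that of the model lifting in $\cat B$) provides which piece of the filler, and in particular that the equality $\tilde h j = g$ is obtained from the \emph{uniqueness} half of the cartesian universal property rather than its existence half. The proof is essentially a mechanical dualization of Lemma~\ref{lem:quill-bifib-cocart-above-cof} and can reasonably be stated with the single sentence "the same argument establishes the dual statement".
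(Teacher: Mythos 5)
Your proof is correct and is precisely the dualization that the paper invokes with its one-line remark ``the same argument establishes the dual statement'': you lift in $\cat B$, pull back along the cartesian map $f$ to get a filler $\tilde h$ above $h$ satisfying one triangle, and then verify the remaining triangle $\tilde h j = g$ by noting both maps lie above $p(g)$ and post-compose with $f$ to $g'j = fg$, so the uniqueness clause of the cartesian universal property forces equality. Nothing is missing; this is exactly the intended argument, just written out instead of left to the reader.
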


\begin{proposition}
  \label{prop:quill-bifib-charac-total-cof}%
  A map $f\from X \to Y$ in $\cat E$ is a (acyclic) cofibration if and
  only if $p(f)$ is a (acyclic) cofibration in $\cat E$ and
  $\pushfact f$ is a (acyclic) cofibration in the fiber
  $\fiber{\cat E}{pY}$.
\end{proposition}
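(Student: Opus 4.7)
The plan is to leverage the canonical factorization $f = \pushfact{f} \circ \cocart{u}{X}$ in $\cat E$, where $u = p(f)$. Two ingredients are used repeatedly: Lemma~\ref{lem:quill-bifib-cocart-above-cof}, which promotes cocartesian arrows above (acyclic) cofibrations of $\cat B$ to (acyclic) cofibrations of $\cat E$, and the restriction property built into the definition of a Quillen bifibration, which identifies (acyclic) cofibrations of the fiber $\fiber{\cat E}{pY}$ with those (acyclic) cofibrations of $\cat E$ that happen to live above $\id{pY}$.

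For the ``if'' direction, assume $u$ is a cofibration of $\cat B$ and $\pushfact{f}$ is a cofibration of $\fiber{\cat E}{pY}$. Then Lemma~\ref{lem:quill-bifib-cocart-above-cof} makes $\cocart{u}{X}$ a cofibration of $\cat E$, restriction makes $\pushfact{f}$ a cofibration of $\cat E$, and their composite $f$ is therefore a cofibration of $\cat E$. The acyclic variant is identical, using the acyclic clauses of the lemma and of the restriction hypothesis, plus closure of acyclic cofibrations under composition. For the ``only if'' direction, $p(f) = u$ is immediately a cofibration of $\cat B$ since $p$ is left Quillen; to see that $\pushfact{f}$ is a cofibration of the fiber, I would pick an arbitrary acyclic fibration $q\from W \to Z$ in $\fiber{\cat E}{pY}$ and a lifting square
\begin{displaymath}
  \begin{tikzcd}
    \push{u}{X} \rar["g"] \dar["\pushfact{f}"swap] & W \dar["q"] \\
    Y \rar["g'"swap] & Z
  \end{tikzcd}
\end{displaymath}
and precompose with $\cocart{u}{X}$. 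The resulting outer square in $\cat E$ has $f$ on the left and $q$ on the right, the latter being an acyclic fibration of $\cat E$ by restriction; hence $f$ being a cofibration of $\cat E$ produces a lift $h \from Y \to W$ in $\cat E$. Taking $p$ of the equation $qh = g'$ gives $p(h) = \id{pY}$, so $h$ belongs to the fiber; and since $h\pushfact{f} \circ \cocart{u}{X} = hf = g\circ \cocart{u}{X}$, the uniqueness clause in the cocartesian property of $\cocart{u}{X}$ forces $h\pushfact{f} = g$, producing the desired fiber-level lift.

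The acyclic case of the converse is then a 2-out-of-3 argument: if $f$ is an acyclic cofibration of $\cat E$, then $u$ is an acyclic cofibration of $\cat B$, so $\cocart{u}{X}$ is an acyclic cofibration of $\cat E$ by the lemma, and the factorization $f = \pushfact{f}\circ\cocart{u}{X}$ forces $\pushfact{f}$ to be a weak equivalence of $\cat E$, hence of the fiber by restriction. The main point to watch is the bookkeeping in the only-if direction: the lift $h$ produced by the cofibration property of $f$ only a priori fills the precomposed square, and one needs both the computation $p(h) = \id{pY}$ and the uniqueness clause of the cocartesian property of $\cocart{u}{X}$ to transport it back into a lift of the original fiber-level square.
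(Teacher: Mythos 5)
Your proof is correct and follows essentially the same route as the paper's: the ``if'' direction composes the cocartesian lift of $u$ (made a cofibration by Lemma~\ref{lem:quill-bifib-cocart-above-cof}) with $\pushfact f$, and the ``only if'' direction precomposes a fiber-level lifting problem with $\cocart u X$, extracts a lift $h$ from the cofibration property of $f$, checks $p(h)=\id{pY}$, and uses the uniqueness clause of the cocartesian property to conclude $h\pushfact f = g$. The one small divergence is that you handle the acyclic part of the converse by 2-out-of-3 on the factorization $f=\pushfact f\circ\cocart u X$ rather than by rerunning the lifting argument against mere fibrations; this is a clean shortcut, and it is worth being explicit (as you almost are) that the non-acyclic converse has already shown $\pushfact f$ to be a cofibration of the fiber, so combining that with the weak-equivalence conclusion gives the acyclic cofibration.
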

\begin{proof}
  A direction of the equivalence is easy: if $p(f) = u \from A\to B$
  is a cofibration, then so is the cocartesian morphism $\cocart u X$
  above it by lemma \ref{lem:quill-bifib-cocart-above-cof}; if
  moreover $\pushfact f$ is a cofibration in the fiber
  $\fiber{\cat E}B$, then $f = \pushfact f \cocart u X$ is a composite
  of cofibration, hence it is a cofibration itself.

  Conversely, suppose that $f\from X\to Y$ is a cofibration in $\cat
  E$. Then surely $p(f) = u\from A\to B$ also is a cofibration in
  $\cat B$, since $p$ is a left Quillen functor. Now we want to show
  that $\pushfact f \from \push u X \to Y$ is a cofibration in the
  fiber $\fiber{\cat E}B$. Consider a commutative square in that fiber
  \begin{displaymath}
    \begin{tikzcd}
      \push u X \rar["g"] \dar["\pushfact f"{swap}] & W \dar["q"]
      \\
      Y \rar["g'"{swap}] & Z
    \end{tikzcd}
  \end{displaymath}
  where $q$ is an acyclic fibration of the fiber $\fiber{\cat E}B$,
  and $g,g'$ are arbitrary morphisms in that fiber.
  Since $f$ itself is a cofibration in~$\Ecategory$,
  we know that there exists a lift $h \from Y \to W$
  for the outer square (with four sides $f$, $q$, $g\cocart u X$ and $g'$)
  of the following diagram:
  \begin{displaymath}
    \begin{tikzcd}\cocart u X
      X \drar["f"{swap}] \rar["\cocart u X"] & \push u X \rar["g"]
      \dar[lightgray,"\pushfact f"] & W \dar["q"]
      \\
      & Y \rar["g'"{swap}] \urar["h"] & Z
    \end{tikzcd}
  \end{displaymath}
  Now, there remains to show that $h\pushfact f = g$. We already know that
  $g\cocart u X = h \pushfact f \cocart u X$, and taking advantage of
  the fact that the morphism $\cocart u X$ is cocartesian, 
  we only need to show that $p(g) = p(h \pushfact f)$. 
  Since $g$ and $\pushfact f$ are fiber
  morphisms, it means we need to show that $h$ also. This follows from
  the fact that $qh = g'$ and that $q$ and $g'$ are fiber morphisms.
\end{proof}

In the same way, we get the dual statement:
\begin{proposition}
  \label{prop:quill-bifib-charac-total-fib}%
  A map $f\from X \to Y$ in $\cat E$ is a (acyclic) cofibration if and
  only if $p(f)$ is a (acyclic) cofibration in $\cat E$ and
  $\pushfact f$ is a (acyclic) cofibration in the fiber
  $\fiber{\cat E}{pY}$.
\end{proposition}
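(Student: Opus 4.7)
The statement is the exact dual of Proposition~\ref{prop:quill-bifib-charac-total-cof} (read with ``fibration'' for ``cofibration'', $\pullfact f$ for $\pushfact f$, and $pX$ for $pY$), so my plan is to mirror the previous proof step by step, using Lemma~\ref{lem:quill-bifib-cart-above-fib} in place of Lemma~\ref{lem:quill-bifib-cocart-above-cof} and the cartesian factorization $f = \cart u Y \circ \pullfact f$ in place of the cocartesian one.

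For the easy direction, suppose $p(f)=u\from A\to B$ is an (acyclic) fibration of $\cat B$ and $\pullfact f \from X \to \pull u Y$ is an (acyclic) fibration of the fiber $\fiber{\cat E}A$. Then Lemma~\ref{lem:quill-bifib-cart-above-fib} gives that $\cart u Y \from \pull u Y \to Y$ is an (acyclic) fibration of $\cat E$, and composing with $\pullfact f$ (which sits in the fiber and is therefore also an (acyclic) fibration of~$\cat E$ by the hypothesis that the model structure restricts) yields~$f$ as a composite of (acyclic) fibrations.

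For the converse, assume $f\from X\to Y$ is an (acyclic) fibration of $\cat E$. Then $p(f)$ is an (acyclic) fibration of $\cat B$ because $p$ is a right Quillen functor. To show $\pullfact f$ is an (acyclic) fibration in the fiber $\fiber{\cat E}{pX}$, pick an (acyclic) cofibration $j$ of that fiber and a commutative square
\begin{displaymath}
  \begin{tikzcd}
    W \rar["g"] \dar["j"{swap}] & X \dar["\pullfact f"] \\
    Z \rar["g'"{swap}] & \pull u Y
  \end{tikzcd}
\end{displaymath}
in $\fiber{\cat E}{pX}$. Postcompose $g'$ with $\cart u Y$ to form the outer square with sides $j$, $f$, $g$, $\cart u Y \circ g'$; since $f$ is an (acyclic) fibration of $\cat E$, this outer square admits a lift $h\from Z\to X$. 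The main (and only non-routine) point is to check that $h$ is itself a fiber morphism, so that it provides the desired lift inside $\fiber{\cat E}{pX}$; this follows by applying the universal property of the cartesian arrow $\cart u Y$ to the equality $\cart u Y \circ \pullfact f \circ h = \cart u Y \circ g'$, which forces $\pullfact f \circ h = g'$ and, combined with $p(j) = \id{pX}$, forces $p(h)=\id{pX}$ as well.

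The main obstacle, as in the cofibration case, is exactly this last verification that the lift obtained at the level of $\cat E$ descends into the fiber; every other step is a direct dualization of Lemmas \ref{lem:quill-bifib-push-pull-are-quillen}--\ref{lem:quill-bifib-cart-above-fib} and Proposition~\ref{prop:quill-bifib-charac-total-cof}.
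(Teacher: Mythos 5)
Your approach is correct and is exactly the one the paper intends: the statement as printed is a copy-paste slip (it should read ``fibration'' and $\pullfact f$), and the intended proof is the dualization of the one given for Proposition~\ref{prop:quill-bifib-charac-total-cof}, which is what you carry out. The easy direction and the reduction to a lifting problem are fine.

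However, the one step you yourself flag as ``the main (and only non-routine) point'' is written in a circular order. You invoke the universal property of $\cart u Y$ to conclude $\pullfact f \circ h = g'$ from $\cart u Y \circ \pullfact f \circ h = \cart u Y \circ g'$, and then deduce $p(h)=\id{pX}$. But the uniqueness clause of the cartesian property only identifies two factorizations that already lie above the \emph{same} arrow of $\Bcategory$; to apply it you must first know that $\pullfact f\circ h$ and $g'$ are both above $\id{pX}$, i.e.\ you must already know $p(h)=\id{pX}$. As stated, the argument assumes what it sets out to prove. The correct order, mirroring the paper's proof verbatim, is the reverse: from the \emph{other} half of the lift equation, $hj=g$, together with $p(j)=p(g)=\id{pX}$, one gets $p(h)\circ\id{pX}=\id{pX}$ and hence $p(h)=\id{pX}$; only \emph{then} are $\pullfact f\circ h$ and $g'$ both fiber morphisms, and the uniqueness part of cartesianness of $\cart u Y$ yields $\pullfact f\circ h=g'$. (Your mention of $p(j)=\id{pX}$ suggests you have the right ingredient in mind; it just has to be used to establish $p(h)=\id{pX}$ \emph{before} invoking cartesianness, not after.)

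Also note: in the easy direction you implicitly use that an (acyclic) fibration of the fiber $\fiber{\cat E}A$ is an (acyclic) fibration of $\cat E$; this is indeed guaranteed by the hypothesis that the model structure of $\cat E$ restricts to the fibers, but it is worth saying so, just as the paper does via Lemma~\ref{lem:total-acyclic-are-acyclic} elsewhere.
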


In particular, this means that the model structure on the total category
$\cat E$ is entirely determined by the model structures on the basis
$\cat B$ and on each fiber~$\cat E_B$ of the bifibration. As these
characterizations turn out to be important for what follows, we shall
name them.
\begin{definition}
  \label{def:total-model}%
  Let $p\from \cat E \to \cat B$ be a Grothendieck bifibration such
  that its basis $\cat B$ and each fiber
  $\fiber{\cat E}A\ (A\in\cat B)$ have a model structure.
  \begin{itemize}
  \item a \define{total cofibration} is a morphism $f\from X \to Y$ of
    $\cat E$ above a cofibration $u\from A \to B$ of $\cat B$ such
    that $\pushfact f$ is a cofibration in the fiber
    $\fiber{\cat E} B$,
  \item a \define{total fibration} is a morphism $f\from X \to Y$ of
    $\cat E$ above a fibration $u\from A \to B$ of $\cat B$ such that
    $\pullfact f$ is a fibration in the fiber $\fiber{\cat E} A$,
  \item a \define{total acyclic cofibration} is a morphism
    $f\from X \to Y$ of $\cat E$ above an acyclic cofibration
    $u\from A \to B$ of $\cat B$ such that $\pushfact f$ is an acyclic
    cofibration in the fiber $\fiber{\cat E} B$,
  \item a \define{total acyclic fibration} is a morphism
    $f\from X \to Y$ of $\cat E$ above an acyclic fibration
    $u\from A \to B$ of $\cat B$ such that $\pullfact f$ is an acyclic
    fibration in the fiber $\fiber{\cat E} A$.
  \end{itemize}
\end{definition}
Using this terminology, the two propositions
\ref{prop:quill-bifib-charac-total-cof} and
\ref{prop:quill-bifib-charac-total-fib} just established
come together as: the cofibrations, fibrations, acyclic cofibrations 
and acyclic fibrations of a Quillen bifibration are necessarily the total ones.
Note also that the definitions of total cofibration and total
fibration given in definition~\ref{def:total-model} coincides with the
definition given in the introduction.

\medskip

We end this section by giving simple examples of Quillen
bifibrations. They should serve as both a motivation and a guide for
the reader to navigate into the following definitions and proofs: it
surely has worked that way for us authors.
\begin{example}%
  \label{ex:main-thm-motiv}%
  ~%
  \begin{enumerate}[label=(\arabic*)]%
  \item One of the simplest instances of a Grothendieck bifibration
    other than the identity functor, is a projection from a product:
    \begin{displaymath}
      p\from \cat M \times \cat B \to \cat B
    \end{displaymath}
    Cartesian and cocartesian morphisms coincide and are those of the
    form $(\id M,u)$ for $M \in \cat M$ and $u$ a morphism of
    $\cat B$. In particular, one have $\pullfact {(f,u)} = (f,\id A)$
    and $\pushfact {(f,u)} = (f,\id B)$ for any $u \from A \to B$ in
    $\cat B$ and any $f$ in $\cat M$.

    If $\cat B$ and $\cat M$ are model categories, each fiber
    $\fiber p A \simeq \cat M$ inherits a model structure from
    $\cat M$ and the total fibrations and cofibrations coincide
    precisely with the one of the usual model structure on the product
    $\cat M \times \cat B$.
  \item For a category $\cat B$, one can consider the codomain
    functor:
    \begin{displaymath}
      \cod \from \functorcat{\lincat 2}{\cat B} \to \cat B,\,
      (X\overset f \to A) \mapsto A
    \end{displaymath}
    Cocartesian morphisms above $u$ relatively to $\cod$ are those
    commutative square of the form
    \begin{displaymath}
      \begin{tikzcd}
        X \dar["f"] \ar[r,equal] & X \dar["uf"] \\
        A \ar[r,"u"] & B
      \end{tikzcd}
    \end{displaymath}
    whereas cartesian morphisms above $u$ are the pullback squares
    along $u$. Hence $\cod$ is a Grothendieck bifibration whenever
    $\cat B$ admits pullbacks.

    If moreover $\cat B$ is a model category, then each fiber
    $\fiber \cod A \simeq \slice{\cat B} A$ inherits a model structure
    (namely an arrow is a fibration or a cofibration if it is such as
    an arrow of $\cat B$), and the total fibrations and cofibrations
    coincide with the one in the injective model structure on
    $\functorcat{\lincat 2}{\cat B}$: i.e.\ a cofibration is a
    commutative square with the top and bottom arrows being
    cofibrations in $\cat B$, whereas fibrations are those commutative
    squares
    \begin{displaymath}
      \begin{tikzcd}
        X \ar[dd,"f"] \ar[rr] \drar["h",densely dashed] & & Y \ar[dd,"g"] \\
        & \fiberprod A Y {u,g} \dlar \urar & \\
        A \ar[rr,"u"] & & B
      \end{tikzcd}
    \end{displaymath}
    where both $u$ and $h$ are fibrations in $\cat B$.
  \item Similarly, the total fibrations and cofibrations of the
    Grothendieck bifibration
    $\dom \from \functorcat{\lincat 2}{\cat B} \to \cat B$ over a
    model category $\cat B$ are exactly those of the projective model
    structure on $\functorcat{\lincat 2}{\cat B}$.
  \item In both \cite{stanculescu:bifib-model} and
    \cite{harpaz-prasma:grothendieck-model}, the authors prove a
    theorem similar to our, putting a model structure on the total
    category of a Grothendieck bifibration under specific
    hypothesis. In both case, fibrations and cofibrations of this
    model structure end up being the total ones. The following theorem
    encompasses in particular this two results.
  \end{enumerate}
\end{example}

\section{A Grothendieck construction for Quillen bifibrations}%
\label{sec:actual-thm}%
Now we have the tools to move on to the main goal of this paper, which
is to turn a Grothendieck bifibration $p \from \cat E \to \cat B$ into
a Quillen bifibration whenever both the basis category $\cat B$ and
every fiber $\fiber{\cat E}A \ (A\in\cat B)$ admit model structures in
such a way that all the pairs of adjoint push and pull functors
between fibers are ``homotopically well-behaved''. To be more precise,
we now suppose $\cat B$ to be equipped with a model structure
$(\class C,\class W,\class F)$, and each fiber $\fiber {\cat E} A$
($A\in \cat B$) to be equipped with a model structure
$(\class C_A,\class W_A,\class F_A)$. We also make the following
\showcase{fundamental assumption}:
\begin{displaymath}
  \tag{\quillenpushpull}%
  \label{hyp:quillen-push-pull}%
  \parbox{\dimexpr\linewidth-7em}{%
    \strut%
    For all $u$ in $\cat B$, the adjoint pair $(\push u,\pull u)$
    is a Quillen adjunction.
    \strut%
  }%
\end{displaymath}
We defined in definition~\ref{def:total-model} notions of total
cofibrations and total fibrations, as well as their acyclic
counterparts. These are reminiscent of what happens with Quillen
bifibrations, but they can be defined for any Grothendieck bifibration
whose basis and fibers have model structures. We must insist that in
that framework, {\em total cofibrations} and {\em total fibrations}
are only names, and by no means are they giving the total category
$\cat E$ a model structure. Indeed, the goal of this section, and to
some extent even the goal of this paper, is to provide a complete
characterization, under hypothesis~\eqref{hyp:quillen-push-pull}, of
the Grothendieck bifibrations $p\from \cat E \to \cat B$ for which the
total cofibrations and total fibrations make $p$ into a Quillen
bifibration. For the rest of this section, we shall denote
$\totalcof{\cat E}$, $\totalfib{\cat E}$, $\totalacof{\cat E}$ and
$\totalafib{\cat E}$ for the respective classes of total cofibrations,
total fibrations, total acyclic cofibrations, and total acyclic
fibrations, that is:
\begin{displaymath}
  \begin{aligned}
    \totalcof{\cat E} &= \{f \from X \to Y \in \cat E : p(f) \in
    \class C, \pushfact f \in \class C_{pY} \},
    \\
    \totalfib{\cat E} &= \{f \from X \to Y \in \cat E : p(f) \in
    \class F, \pullfact f \in \class F_{pX} \},
    \\
    \totalacof{\cat E} &= \{f \from X \to Y \in \cat E : p(f) \in
    \class W \cap \class C, \pushfact f \in \class W_{pY} \cap \class
    C_{pY} \},
    \\
    \totalafib{\cat E} &= \{f \from X \to Y \in \cat E : p(f) \in
    \class W \cap \class F, \pullfact f \in \class W_{pX} \cap \class
    F_{pX} \}
  \end{aligned}
\end{displaymath}

\subsection{Main theorem}

In order to state the theorem correctly, we will need some
vocabulary. Recall that the {\em mate}
$\mu \colon \push {u'} \pull v \to \pull {v'} \push u$ associated to a
commutative square of $\cat B$
\begin{displaymath}
  \begin{tikzcd}
    A \ar[d,"u'" swap] \ar[r,"v"] & C \ar[d,"u"] \\
    C' \ar[r,"v'" swap] & B
  \end{tikzcd}
\end{displaymath}
is the natural transformation constructed at point
$Z \in \fiber {\cat E} C$ in two steps as follow: the composite
\begin{displaymath}
  \pull v Z \to Z \to \push u Z 
\end{displaymath}
which is above $uv$, factors through the cartesian arrow
$\cart {v'} {\push u Z} \from \pull {v'} \push u Z \to \push u Z$
(because $v'u' = uv$) into a morphism
$\pull v Z \to \pull {v'} {\push u Z}$ above $u'$, which in turn
factors through the cocartesian arrow
$\cocart {u'} {\push v Z} \from \pull {u'} \push v Z \to \pull {v'}
\push u Z$ giving rise to $\mu_Z$, as summarized in the diagram below.
\begin{displaymath}
  \begin{tikzcd}
    \pull v Z \ar[rr,"\cartz"] \dar["\cocartz"] & & Z \ar[dd,"\cocartz"] \\
    \push {u'} \pull v Z \drar[densely dashed,"\mu_Z"] & & \\
    & \pull {v'} \push u Z \rar["\cartz"] & \push u Z
  \end{tikzcd}
\end{displaymath}

\begin{definition}
  A commutative square of $\cat B$ is said to satifisfy the {\em
    homotopical Beck-Chevalley condition} if its mate is pointwise a
  weak equivalence.
\end{definition}
Consider then the following properties on the Grothendieck bifibration
$p$:
\begin{displaymath}
  \tag{\hbc} \label{hyp:hBC}%
  \parbox{\dimexpr\linewidth-7em}{%
    \strut
    Every commutative square of $\cat B$ of the form
    \begin{displaymath}
      \begin{tikzcd}[ampersand replacement=\&]
        A \ar[d,"u'" swap] \ar[r,"v"] \& C \ar[d,"u"] \\
        C' \ar[r,"v'" swap] \& B
      \end{tikzcd}
      \qquad
      \begin{aligned}
        u,u' &\in \class C \cap \class W,
        \\
        v,v' &\in \class F \cap \class W
      \end{aligned}
    \end{displaymath}
    satisfies the homotopical Beck-Chevalley condition.
    \strut
  }%
\end{displaymath}
and
\begin{displaymath}
  \tag{\weakcons} \label{hyp:weak-conservative}%
  \parbox{\dimexpr\linewidth-5em}{%
    \strut
    The functors $\push u$ and $\pull v$ are homotopically conservative
    whenever $u$ is an acyclic cofibration and $v$ an
    acyclic fibration.
    \strut
  }%
\end{displaymath}
The theorem states that this is exactly what it takes to make the
names ``total cofibrations'' and ``total fibrations'' legitimate, and
to turn $p \from \cat E \to \cat B$ into a Quillen bifibration.
\begin{theorem}%
  \label{thm:main}%
  Under hypothesis \eqref{hyp:quillen-push-pull}, the total category
  $\cat E$ admits a model structure with $\totalcof{\cat E}$ and
  $\totalfib{\cat E}$ as cofibrations and fibrations respectively if
  and only if properties \eqref{hyp:hBC} and
  \eqref{hyp:weak-conservative} are satisfied.

  In that case, the functor $p \from \cat E \to \cat B$ is a Quillen
  bifibration.
\end{theorem}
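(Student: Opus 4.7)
The proof splits into necessity and sufficiency.

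\emph{Necessity.} Suppose the total model structure on $\cat E$ with $\totalcof{\cat E}$ as cofibrations and $\totalfib{\cat E}$ as fibrations exists. By the pseudofunctoriality isomorphisms, a morphism above $\id A$ is a total cofibration (resp.\ fibration) iff it is a cofibration (resp.\ fibration) of $\cat E_A$, so the total model structure restricts on each fiber $\cat E_A$ to the given model structure. To obtain \eqref{hyp:weak-conservative}, observe that for $u \in \class C \cap \class W$ the cocartesian lift $\cocart u X$ has identity $\pushfact$, hence lies in $\totalacof{\cat E}$ and is a weak equivalence in $\cat E$. The naturality square of $\cocart{u}{-}$ at any fiber morphism $f : X \to Y$ has both horizontal arrows weak equivalences, so by 2-out-of-3 $\push u f$ is a weak equivalence iff $f$ is; this is homotopical conservativity of $\push u$, and dually for $\pull v$. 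For \eqref{hyp:hBC}, each of the four (co)cartesian lifts in the mate diagram for $\mu_Z$ lies above a map in $\class C \cap \class W$ or $\class F \cap \class W$, hence is a weak equivalence in $\cat E$; two applications of 2-out-of-3 force the fiber morphism $\mu_Z$ to be a weak equivalence in $\cat E_{C'}$.

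\emph{Sufficiency.} Applying lemma~\ref{lem:stan-lemma} twice, first with the base/fiber weak factorization systems $(\class C, \class F \cap \class W)$ and $(\class C_A, \class F_A \cap \class W_A)$, and then with $(\class C \cap \class W, \class F)$ and $(\class C_A \cap \class W_A, \class F_A)$, hypothesis \eqref{hyp:quillen-push-pull} supplies the wfs-preserving condition and yields two weak factorization systems on $\cat E$, namely $(\totalcof{\cat E}, \totalafib{\cat E})$ and $(\totalacof{\cat E}, \totalfib{\cat E})$. These are manifestly intertwined, so by corollary~\ref{cor:intertwined-model} all that remains is the 2-out-of-3 property for $\class W_{\cat E} := \totalafib{\cat E} \circ \totalacof{\cat E}$. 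Observe that this will also settle the last sentence of the theorem: $p$ preserves cofibrations, fibrations and weak equivalences tautologically, and the model structure restricts to each fiber as in the necessity direction.

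The key step is to establish a characterization of $\class W_{\cat E}$: writing any morphism $f : X \to Y$ with $p(f) = vu$ in $\cat B$ ($u \in \class C \cap \class W$, $v \in \class F \cap \class W$) uniquely as $f = \cart v Y \circ \tilde f \circ \cocart u X$ for a fiber morphism $\tilde f : \push u X \to \pull v Y \in \cat E_D$, I claim $f \in \class W_{\cat E}$ iff $p(f) \in \class W$ and $\tilde f \in \class W_D$ for some (equivalently, every) such factorization. The sufficient-on-some-factorization half is direct: if $\tilde f \in \class W_D$, factor it in $\cat E_D$ as $q''j''$ with $j'' \in \class C_D \cap \class W_D$ and $q'' \in \class F_D \cap \class W_D$; then $j'' \circ \cocart u X \in \totalacof{\cat E}$ and $\cart v Y \circ q'' \in \totalafib{\cat E}$ by closure of left and right maps under composition, so $f \in \class W_{\cat E}$. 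Conversely, starting from $f = qj$ with $j \in \totalacof{\cat E}$ and $q \in \totalafib{\cat E}$ and taking the factorization $p(f) = p(q)p(j)$, one reads off $\tilde f = \pullfact q \circ \pushfact j \in \class W_D$. Once the characterization is available, 2-out-of-3 for $\class W_{\cat E}$ follows from 2-out-of-3 for $\class W$ in $\cat B$ together with 2-out-of-3 in a common intermediate fiber, obtained by pasting compatible factorizations of a composable pair along a single intermediate object in $\cat B$.

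The genuine obstacle is proving the independence of the fiber condition on the choice of factorization: given two factorizations $p(f) = vu = v'u'$, the lifting property between an acyclic cofibration and an acyclic fibration in $\cat B$ supplies a comparison weak equivalence $w : D \to D'$. Factoring $w$ itself as an acyclic cofibration followed by an acyclic fibration in $\cat B$ and applying the pseudofunctoriality corollary~\ref{cor:pseudo-iso-in-fiber} together with \eqref{hyp:hBC} to the resulting commutative squares produces a zig-zag of canonical fiber morphisms between $\tilde f$ and $\tilde{f'}$, while \eqref{hyp:weak-conservative} is then invoked to transport the weak-equivalence status along the acyclic push/pull functors appearing in that zig-zag. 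This is where both hypotheses are used in earnest; every other ingredient of sufficiency is a formal consequence of Stanculescu's lemma and the intertwined weak factorization machinery.
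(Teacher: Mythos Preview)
Your overall strategy coincides with the paper's: Stanculescu's lemma produces the two weak factorization systems, they are intertwined, and everything reduces to the 2-out-of-3 property for $\totalweak{\cat E}=\totalafib{\cat E}\circ\totalacof{\cat E}$; the characterisation of $\totalweak{\cat E}$ via the fiber morphism $\middlefact f u v$ and the independence of that condition from the chosen factorisation $p(f)=vu$ is indeed the core lemma. Your necessity argument is also the paper's, modulo the tacit use of the fact that $\totalacof{\cat E}$ and $\totalafib{\cat E}$ really are the acyclic cofibrations and acyclic fibrations of the assumed model structure (the paper records this separately, since it is what justifies ``lies in $\totalacof{\cat E}$ and is a weak equivalence'').

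There is, however, a gap in your handling of 2-out-of-3. You assert that once independence is known, 2-out-of-3 ``follows from 2-out-of-3 for $\class W$ in $\cat B$ together with 2-out-of-3 in a common intermediate fiber, obtained by pasting compatible factorizations of a composable pair along a single intermediate object in $\cat B$'', and that \eqref{hyp:hBC} and \eqref{hyp:weak-conservative} are used in earnest only for independence. This is not so: there is no single object of $\cat B$ through which $p(f)$, $p(g)$ and $p(h)=p(g)p(f)$ simultaneously factor as an acyclic cofibration followed by an acyclic fibration, so the three fiber morphisms $\tilde f$, $\tilde g$, $\tilde h$ cannot be placed in one fiber and composed. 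What the paper actually does is factor $p(f)=qj$ through $D$, $p(g)=q'j'$ through $D'$, and then factor the connecting weak equivalence $j'q$ as $q''j''$ through a third object $D''$; for the resulting factorisation $p(h)=(q'q'')(j''j)$ one finds that $\tilde h$ is isomorphic in $\cat E_{D''}$ to the composite
\[
\pull{q''}(\tilde g)\;\circ\;\mu_Y\;\circ\;\push{j''}(\tilde f),
\]
where $\mu_Y$ is the mate of the square $q''j''=j'q$. It is precisely \eqref{hyp:hBC} that makes $\mu_Y$ a fiber weak equivalence, and \eqref{hyp:weak-conservative} that lets $\push{j''}$ and $\pull{q''}$ preserve and reflect weak equivalences; both hypotheses are therefore invoked a second time in the 2-out-of-3 argument itself, and the step is not a formal consequence of the independence lemma.
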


The proof begin with a very candid remark that we promote as a
proposition because we shall use it several times in the rest of the
proof.
\begin{proposition}
  \label{prop:total-intertwined-wfs}%
  $(\totalacof{\cat E},\totalfib{\cat E})$ and
  $(\totalcof{\cat E},\totalafib{\cat E})$ are intertwined weak
  factorization systems.
\end{proposition}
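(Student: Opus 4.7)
The plan is essentially to reduce the statement to two applications of Stanculescu's lemma (\ref{lem:stan-lemma}), followed by a trivial verification of the intertwining condition. Nothing deep should be needed here; the proposition is really a bookkeeping exercise that separates out the purely combinatorial content of the main theorem from the (much harder) 2-out-of-3 property handled later.

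First I would unpack what hypothesis \eqref{hyp:quillen-push-pull} actually buys us: each push--pull adjunction $(\push u,\pull u)$ being a Quillen adjunction is, by definition, the statement that it is wfs-preserving simultaneously for the weak factorization system $(\class C\cap\class W,\class F)$ and for the weak factorization system $(\class C,\class F\cap\class W)$ in the corresponding fibers. Thus Stanculescu's lemma applies twice over, with $\pi = p$ and the two candidate wfs on the basis being the two halves of the model structure on $\cat B$ together with the matching halves on each fiber $\fiber{\cat E}A$. Reading off the formulas given in lemma~\ref{lem:stan-lemma}, the left class in the first application consists of arrows $f$ with $p(f) \in \class C\cap \class W$ and $\pushfact f \in \class C_{pY}\cap \class W_{pY}$, which is precisely $\totalacof{\cat E}$; the right class is $\totalfib{\cat E}$. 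Similarly the second application gives $(\totalcof{\cat E},\totalafib{\cat E})$. So both are weak factorization systems on $\cat E$.

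For the intertwining, I just need to check one of the two equivalent inclusions, say $\totalacof{\cat E}\subseteq \totalcof{\cat E}$. But this is immediate from the definitions: if $f\from X\to Y$ lies in $\totalacof{\cat E}$, then $p(f)\in\class C\cap\class W\subseteq \class C$ and $\pushfact f\in\class C_{pY}\cap\class W_{pY}\subseteq \class C_{pY}$, whence $f\in\totalcof{\cat E}$. The dual inclusion $\totalafib{\cat E}\subseteq \totalfib{\cat E}$ is automatic from the earlier remark that in intertwined wfs either inclusion implies the other, but it can equally be checked directly in the same way.

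There is really no obstacle to overcome in this proposition; the only mild care needed is to remember that \eqref{hyp:quillen-push-pull} encodes both wfs-preservation conditions at once, so that Stanculescu's lemma may be invoked on each of the two factorization systems supplied by the model structures on $\cat B$ and on the fibers. Everything else is definitional.
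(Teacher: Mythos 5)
Your argument is exactly the paper's own proof, just spelled out more explicitly: two invocations of Stanculescu's lemma~\ref{lem:stan-lemma} (once for each half of the fiberwise and basis model structures, each being a legitimate wfs-preserving setup thanks to hypothesis~\eqref{hyp:quillen-push-pull}), followed by the trivial inclusion $\totalacof{\cat E}\subseteq \totalcof{\cat E}$ from the definitions. Nothing is missing and nothing differs in substance.
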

\begin{proof}
  Obviously $\totalacof{\cat E} \subseteq \totalcof{\cat E}$ and
  $\totalafib{\cat E} \subseteq \totalfib{\cat E}$. Independently, a
  direct application of lemma \ref{lem:stan-lemma} shows that
  $(\totalacof{\cat E},\totalfib{\cat E})$ and
  $(\totalcof{\cat E},\totalafib{\cat E})$ are both weak factorization
  systems on $\cat E$.
\end{proof}
The strategy to prove theorem \ref{thm:main} then goes as follow:
\begin{itemize}
\item first we will show the necessity of conditions~\eqref{hyp:hBC}
  and~\eqref{hyp:weak-conservative}: if $\totalcof{\cat E}$ and
  $\totalfib{\cat E}$ are the cofibrations and fibrations of a model
  structure on $\cat E$, then hypothesis~\eqref{hyp:hBC}
  and~\eqref{hyp:weak-conservative} are met,
\item next, the harder part is the sufficiency: because of proposition
  \ref{prop:total-intertwined-wfs}, it is enough to show that the
  induced class
  $\totalweak{\cat E} = \totalafib{\cat E} \circ \totalacof{\cat E} $
  of {\em total weak equivalences} has the 2-out-of-3 property to
  conclude through corollary \ref{cor:intertwined-model}.
\end{itemize}

\subsection{Proof, part I: necessity}
\label{subsec:necessity-main}

In all this section, we suppose that $\totalcof{\cat E}$ and
$\totalfib{\cat E}$ provide respectively the cofibrations and
fibrations of a model structure on the total category $\cat E$. We
will denote $\totalweak{\cat E}$ the corresponding class of weak
equivalences.

First, we prove a technical lemma, directly following from proposition
\ref{prop:total-intertwined-wfs}, that will be extensively used in the
following. Informally, it states that the name given to the members of
$\totalacof{\cat E}$ and $\totalafib{\cat E}$ are not foolish.
\begin{lemma}
  \label{lem:total-acyclic-are-acyclic}%
  $\totalacof{\cat E} = \totalweak{\cat E} \cap \totalcof{\cat E}$ and
  $\totalafib{\cat E} = \totalweak{\cat E} \cap \totalfib{\cat E}$.
\end{lemma}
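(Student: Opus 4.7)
The plan is to read the statement off from the uniqueness of weak factorization systems generated by a given class, combined with Proposition~\ref{prop:total-intertwined-wfs}.

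First I would recall the two weak factorization systems automatically associated with our assumed model structure: since $(\totalcof{\cat E}, \totalweak{\cat E}, \totalfib{\cat E})$ is a model structure on $\cat E$, clause (ii) of Definition~\ref{def:model-category} supplies the two weak factorization systems
\begin{displaymath}
  (\totalcof{\cat E},\, \totalweak{\cat E} \cap \totalfib{\cat E})
  \qquad\text{and}\qquad
  (\totalcof{\cat E} \cap \totalweak{\cat E},\, \totalfib{\cat E}).
\end{displaymath}
On the other hand, Proposition~\ref{prop:total-intertwined-wfs} supplies two further weak factorization systems $(\totalcof{\cat E}, \totalafib{\cat E})$ and $(\totalacof{\cat E}, \totalfib{\cat E})$ on the same category $\cat E$.

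Next I would invoke the basic fact that a weak factorization system is determined by either of its two classes: the right class is the class of maps with the right lifting property against the left class, and symmetrically. Comparing the first system of each pair, both $(\totalcof{\cat E}, \totalafib{\cat E})$ and $(\totalcof{\cat E}, \totalweak{\cat E} \cap \totalfib{\cat E})$ have left class $\totalcof{\cat E}$, hence their right classes coincide, giving $\totalafib{\cat E} = \totalweak{\cat E} \cap \totalfib{\cat E}$. Comparing the second system of each pair in the dual way, both $(\totalacof{\cat E}, \totalfib{\cat E})$ and $(\totalcof{\cat E} \cap \totalweak{\cat E}, \totalfib{\cat E})$ have right class $\totalfib{\cat E}$, so their left classes coincide, yielding $\totalacof{\cat E} = \totalweak{\cat E} \cap \totalcof{\cat E}$.

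There is no real obstacle here; the whole content of the lemma is the recognition that the formally defined classes $\totalacof{\cat E}$ and $\totalafib{\cat E}$ (given by the pointwise prescription of Definition~\ref{def:total-model}) are forced to be the honest acyclic cofibrations and acyclic fibrations of the model structure on $\cat E$, simply by uniqueness of the right/left class of a weak factorization system once the other is fixed.
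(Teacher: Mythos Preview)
Your proof is correct and follows essentially the same approach as the paper: both arguments compare the weak factorization systems coming from Proposition~\ref{prop:total-intertwined-wfs} with those of the assumed model structure, and conclude by the uniqueness of one class of a weak factorization system once the other is fixed.
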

\begin{proof}
  By proposition \ref{prop:total-intertwined-wfs}, we know that both
  $(\totalacof{\cat E},\totalfib{\cat E})$ and
  $(\totalweak{\cat E} \cap \totalcof{\cat E},\totalfib{\cat E})$ are
  weak factorization systems with the same class of right maps, hence
  their class of left maps should coincide. Similarly the weak
  factorization systems $(\totalcof{\cat E},\totalafib{\cat E})$ and
  $(\totalcof{\cat E}, \totalweak{\cat E} \cap \totalfib{\cat E})$
  have the same class of left maps, hence their class of right maps
  coincide.
\end{proof}

\begin{corollary}
  \label{cor:inclusion-weak-conservative}%
  For any object $A$ of $\cat B$, the inclusion functor
  $\fiber{\cat E}A \to \cat E$ is homotopically conservative.
\end{corollary}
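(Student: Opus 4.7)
The plan is to unpack the meaning of ``homotopically conservative'' and exploit the concrete description of total (acyclic) cofibrations and fibrations on morphisms of a fiber. Let $\iota_A\from \fiber{\cat E}A \to \cat E$ denote the inclusion, so that for any morphism $f$ above the identity $\id A$ we have $p(\iota_A f) = \id A$, which is automatically in every one of the classes $\class C$, $\class W$, $\class F$. Moreover, by the pseudofunctoriality recalled after the definition of cloven bifibrations, the cocartesian arrow $\cocart{\id A} X$ is an isomorphism (and similarly for $\cart{\id A} Y$), so that $\pushfact{f}$ and $\pullfact{f}$ are isomorphic to $f$ in $\fiber{\cat E} A$. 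First I would record the immediate consequence: for $f \in \fiber{\cat E}A$, being a total (acyclic) cofibration is equivalent to being an (acyclic) cofibration in the fiber, and dually for (acyclic) fibrations.

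For preservation, I would start with a weak equivalence $f\from X \to Y$ of the fiber $\fiber{\cat E}A$. Factoring $f = q' j'$ in $\fiber{\cat E} A$ with $j'$ an acyclic cofibration and $q'$ an acyclic fibration (possible by the fiber model structure), the previous paragraph identifies $j' \in \totalacof{\cat E}$ and $q' \in \totalafib{\cat E}$. In particular $j', q' \in \totalweak{\cat E}$, hence $f = q' j' \in \totalweak{\cat E}$ by closure under composition.

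For reflection, suppose $f\in \fiber{\cat E}A$ and $f \in \totalweak{\cat E}$. Using the fiber's weak factorization system for (acyclic cofibrations, fibrations), factor $f = q'j'$ with $j'$ an acyclic cofibration in the fiber and $q'$ a fibration in the fiber. As before $j' \in \totalacof{\cat E} \subseteq \totalweak{\cat E}$, so the 2-out-of-3 property of $\totalweak{\cat E}$ (which holds by our standing assumption that $\totalcof{\cat E}, \totalfib{\cat E}, \totalweak{\cat E}$ form a model structure) gives $q' \in \totalweak{\cat E}$. Since $q'$ lives in the fiber and is a fibration there, it is a total fibration; by lemma~\ref{lem:total-acyclic-are-acyclic} it lies in $\totalweak{\cat E} \cap \totalfib{\cat E} = \totalafib{\cat E}$, so it is an acyclic fibration in the fiber. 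Hence $f = q' j'$ is a composite of two weak equivalences in $\fiber{\cat E} A$, and by 2-out-of-3 in the fiber's model structure $f$ itself is a weak equivalence of $\fiber{\cat E}A$. No step here is really subtle: the only thing to keep one's eye on is that, in this necessity half of the argument, the identification of the weak equivalences of $\cat E$ with $\totalweak{\cat E}$ is part of our hypothesis, so that the 2-out-of-3 needed in the total category is available for free.
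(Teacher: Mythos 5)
Your proposal is correct and follows essentially the same route as the paper's own proof: factor the fiber morphism via one of the fiber weak factorization systems, identify fiber (acyclic) cofibrations and fibrations with total ones, invoke Lemma~\ref{lem:total-acyclic-are-acyclic}, and finish with 2-out-of-3 in $\totalweak{\cat E}$ and then in the fiber. Your preliminary observation about $\cocart{\id A} X$ and $\cart{\id A} Y$ being isomorphisms just makes explicit a step the paper uses silently when it says a fibration of $\fiber{\cat E}A$ is a fibration of $\cat E$.
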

\begin{proof}
  The preservation of weak equivalences comes from the fact that
  acyclic cofibrations and acyclic fibrations of $\fiber{\cat E}A$ are
  elements of $\totalacof{\cat E}$ and $\totalafib{\cat E}$
  respectively. Thus, by lemma \ref{lem:total-acyclic-are-acyclic},
  they are elements of $\totalweak{\cat E}$.

  Conversely, suppose that $f$ is a map of $\fiber{\cat E} A$ which is
  a weak equivalence of $\cat E$. We want to show that $f$ is a weak
  equivalence of the fiber $\fiber{\cat E}A$. The map $f$ factors in
  the fiber $\fiber{\cat E}A$ as $f = qj$ where
  $j \in \class C_A \cap \class W_A$ and $q \in \class F_A$. We just
  need to show that $q \in \class W_A$. By lemma
  \ref{lem:total-acyclic-are-acyclic}, $j$ is also a weak equivalence
  of $\cat E$. By the 2-out-of-3 property of $\class W_{\cat E}$, the
  map $q$ is a weak equivalence of $\cat E$. As a fibration of
  $\fiber{\cat E}A$, $q$ is also a fibration of $\cat E$. This
  establishes that $q$ is an acyclic fibration of $\cat E$. By lemma
  \ref{lem:total-acyclic-are-acyclic}, $q$ is thus an element of
  $\totalafib{\cat E}$. This conludes the proof that $q = \pullfact q$
  is an acyclic fibration, and thus a weak equivalence, in the fiber
  $\fiber{\cat E}A$.
\end{proof}

\begin{proposition}[Property (\weakcons)]
  If $j\from A \to B$ in an acyclic cofibration in $\cat B$, then
  $\push j \from \fiber{\cat E} A \to \fiber{\cat E} B$ is
  homotopically conservative.
  
  If $q\from A \to B$ in an acyclic fibration in $\cat B$, then
  $\pull q \from \fiber{\cat E} B \to \fiber{\cat E} A$ is
  homotopically conservative.
\end{proposition}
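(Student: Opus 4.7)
The plan is to derive homotopical conservativity of $\push j$ from the 2-out-of-3 property of $\totalweak{\cat E}$ applied to a naturality square whose horizontal edges are $\cat E$-weak equivalences. The starting observation is that for every $X \in \fiber{\cat E}A$, the cocartesian morphism $\cocart j X \from X \to \push j X$ belongs to $\totalacof{\cat E}$: its image under $p$ is the acyclic cofibration $j$, and its canonical fibral factor $\pushfact{(\cocart j X)}$ must equal $\id{\push j X}$ by the uniqueness clause of the cocartesian universal property, so it is tautologically an acyclic cofibration in $\fiber{\cat E}B$. By Lemma \ref{lem:total-acyclic-are-acyclic}, $\cocart j X$ is then a weak equivalence of the total model structure on $\cat E$.

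Next, for any $f\from X \to Y$ in $\fiber{\cat E}A$, the naturality of the cocartesian cleavage gives a commutative square
\begin{displaymath}
  \begin{tikzcd}
    X \rar["\cocart j X"] \dar["f"swap] & \push j X \dar["\push j f"] \\
    Y \rar["\cocart j Y"swap] & \push j Y
  \end{tikzcd}
\end{displaymath}
in $\cat E$ whose horizontal arrows are $\cat E$-weak equivalences by the previous step. Applying the 2-out-of-3 property of $\totalweak{\cat E}$ to the common diagonal shows that $f \in \totalweak{\cat E}$ if and only if $\push j f \in \totalweak{\cat E}$. Since $f$ and $\push j f$ lie inside the fibers $\fiber{\cat E}A$ and $\fiber{\cat E}B$ respectively, Corollary \ref{cor:inclusion-weak-conservative} converts each of these two conditions into being a weak equivalence in the corresponding fiber. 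Combining the two equivalences delivers homotopical conservativity of $\push j$.

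The claim about $\pull q$ is established by the strictly dual argument: the cartesian morphism $\cart q Y$ above the acyclic fibration $q$ belongs to $\totalafib{\cat E} \subseteq \totalweak{\cat E}$ (its $\pullfact$ is an identity by uniqueness in the cartesian universal property), the naturality square of the cartesian cleavage has its horizontal edges as $\cat E$-weak equivalences, and the same 2-out-of-3 plus Corollary \ref{cor:inclusion-weak-conservative} argument concludes. No serious obstacle is anticipated: the proof is essentially 2-out-of-3 bookkeeping once one has correctly identified cocartesian (resp.\ cartesian) arrows above acyclic cofibrations (resp.\ fibrations) as total acyclic cofibrations (resp.\ fibrations), which is the only place where Lemma \ref{lem:total-acyclic-are-acyclic} and Corollary \ref{cor:inclusion-weak-conservative} really pull their weight.
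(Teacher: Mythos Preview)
Your proof is correct and follows essentially the same approach as the paper: both arguments use the naturality square of the cocartesian cleavage, observe that the horizontal cocartesian maps above the acyclic cofibration $j$ lie in $\totalacof{\cat E}$ (hence in $\totalweak{\cat E}$ by Lemma~\ref{lem:total-acyclic-are-acyclic}), apply 2-out-of-3 in $\cat E$, and then invoke Corollary~\ref{cor:inclusion-weak-conservative} to pass between fiber and total weak equivalences. Your write-up simply spells out a bit more explicitly why $\pushfact{(\cocart j X)}$ is an identity, but the argument is otherwise the same.
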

\begin{proof}
  We only prove the first part of the proposition, as the second one
  is dual. Recall that the image $\push j (f)$ of a map
  $f\from X \to Y$ of $\fiber{\cat E} A$ is computed as the unique
  morphism of $\fiber{\cat E} B$ making the following square commute:
  \begin{displaymath}
    \begin{tikzcd}
      X \rar \dar["f",swap] & \push j X \dar["\push j(f)"] \\
      Y \rar & \push j Y
    \end{tikzcd}
  \end{displaymath}
  The horizontal morphisms in the diagram are cocartesian above the
  acyclic cofibration $j$. As such they are elements
  $\totalacof{\cat E}$, and thus weak equivalence in $\cat E$ by lemma
  \ref{lem:total-acyclic-are-acyclic}. By the 2-out-of-3 property of
  $\totalweak {\cat E}$, $f$ is a weak equivalence in $\cat E$ if and
  only if $\push j (f)$ is one also in $\cat E$. Corollary
  \ref{cor:inclusion-weak-conservative} allows then to conclude: $f$
  is a weak equivalence in the fiber $\fiber{\cat E} A$ if and only if
  $\push j (f)$ is one in the fiber $\fiber{\cat E} B$.
\end{proof}

\begin{proposition}[Property (\hbc)]
  Commutative squares of $\cat B$ of the form
  \begin{displaymath}
    \begin{tikzcd}
      A \rar["v"] \dar["u'",swap] & C \dar["u"] \\
      C' \rar["v'",swap] & B 
    \end{tikzcd}
    \quad u,u' \in \class C \cap \class W
    \quad v,v' \in \class F \cap \class W
  \end{displaymath}
  satisfy the homotopical Beck-Chevalley condition.
\end{proposition}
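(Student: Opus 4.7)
The plan is to chase weak equivalences through the defining diagram of the mate, relying on the fact that cocartesian (resp. cartesian) morphisms above acyclic cofibrations (resp. acyclic fibrations) automatically sit in $\totalacof{\cat E}$ (resp. $\totalafib{\cat E}$), and that by lemma~\ref{lem:total-acyclic-are-acyclic} these classes are contained in $\totalweak{\cat E}$. Fix an object $Z \in \fiber{\cat E}{C}$ and consider the diagram
\begin{displaymath}
  \begin{tikzcd}
    \pull v Z \ar[rr,"\cartz"] \ar[d,"\cocartz"swap] & & Z \ar[dd,"\cocartz"] \\
    \push{u'} \pull v Z \ar[dr,"\mu_Z"swap] & & \\
    & \pull{v'} \push u Z \ar[r,"\cartz"swap] & \push u Z
  \end{tikzcd}
\end{displaymath}
used in the definition of $\mu_Z$. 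Since $u$ and $u'$ are acyclic cofibrations and $v$, $v'$ are acyclic fibrations of $\cat B$, the two horizontal cartesian arrows belong to $\totalafib{\cat E}$ and the two vertical cocartesian arrows belong to $\totalacof{\cat E}$; by lemma~\ref{lem:total-acyclic-are-acyclic} all four are in $\totalweak{\cat E}$.

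Next I would use the 2-out-of-3 property of $\totalweak{\cat E}$ twice. The outer composite $\pull v Z \to Z \to \push u Z$ is a composition of two weak equivalences, hence a weak equivalence. On the other hand, by construction, this composite coincides with
\begin{displaymath}
  \pull v Z \xrightarrow{\cocartz} \push{u'} \pull v Z \xrightarrow{\mu_Z} \pull{v'} \push u Z \xrightarrow{\cartz} \push u Z,
\end{displaymath}
whose first and last factors we have just shown to lie in $\totalweak{\cat E}$. Applying 2-out-of-3 twice then forces $\mu_Z$ to be a weak equivalence in $\cat E$.

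Finally, since $\mu_Z$ is a morphism of the fiber $\fiber{\cat E}{C'}$ (its source and target both lie above $C'$ and it itself is above $\id_{C'}$), I invoke corollary~\ref{cor:inclusion-weak-conservative}: the inclusion $\fiber{\cat E}{C'} \hookrightarrow \cat E$ is homotopically conservative, so $\mu_Z$ is a weak equivalence in the fiber. This holds for every $Z$, which is precisely the homotopical Beck--Chevalley condition. No real obstacle is expected here, as the argument is purely diagrammatic; the only subtlety is checking carefully that $\mu_Z$ is genuinely a fiber morphism, which is immediate from the uniqueness clauses in the cartesian/cocartesian factorizations used to define it.
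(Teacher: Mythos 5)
Your proof is correct and follows essentially the same route as the paper: identify the cartesian arrows above $v,v'$ and the cocartesian arrows above $u,u'$ as elements of $\totalafib{\cat E}$ and $\totalacof{\cat E}$ respectively, hence total weak equivalences by lemma~\ref{lem:total-acyclic-are-acyclic}, then peel them off the defining diagram of $\mu_Z$ via 2-out-of-3, and finally descend to the fiber $\fiber{\cat E}{C'}$ using corollary~\ref{cor:inclusion-weak-conservative}. The only difference is cosmetic bookkeeping in how many applications of 2-out-of-3 you count.
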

\begin{proof}
  Recall that for such a square in $\cat B$, the component of the mate
  $\mu \from \push {u'} \pull v \to \pull {v'} \push u$ at
  $Z \in \fiber{\cat E}C$ is defined as the unique map of
  $\fiber{\cat E}{Z'}$ making the following diagram commute:
  \begin{displaymath}
    \begin{tikzcd}
      \pull v Z \ar[rr,"\cartz"] \dar["\cocartz"] & & Z \ar[dd,"\cocartz"] \\
      \push {u'} \pull v Z \drar["\mu_Z"] & & \\
      & \pull {v'} \push u Z \rar["\cartz"] & \push u Z
    \end{tikzcd}
  \end{displaymath}
  Arrows labelled $\cartz$ and $\cocartz$ are respectively cartesian
  above acyclic fibrations and cocartesian above acyclic cofibrations,
  hence weak equivalences of $\cat E$ by lemma
  \ref{lem:total-acyclic-are-acyclic}. By applying the 2-out-of-3
  property of $\totalweak{\cat E}$ three times in a row, we conclude
  that the fiber map $\mu_Z$ is a weak equivalence of $\cat E$, hence
  also of $\fiber{\cat E}{C'}$ by corollary
  \ref{cor:inclusion-weak-conservative}.
\end{proof}

\subsection{Proof, part II: sufficiency}
\label{subsec:sufficiency}

We have established the necessity of \eqref{hyp:hBC} and
\eqref{hyp:weak-conservative} in theorem \ref{thm:main}. We now prove
the sufficiency of these conditions. This is the hard part of the
proof. Recall that every fiber $\fiber{\cat E}A$ of the Grothendieck
bifibration $p \from \cat E \to \cat B$ is equipped with a model
structure in such a way that \eqref{hyp:quillen-push-pull} is
satisfied. From now on, we make the additional assumptions that
\eqref{hyp:hBC} and \eqref{hyp:weak-conservative} are satisfied.

We will use the notation
$\totalweak{\cat E} = \totalafib{\cat E} \circ \totalacof{\cat E}$ the
class of maps that can be written as a total acyclic cofibration
postcomposed with a total acyclic fibration. The overall goal of this
section is to prove that
\begin{claim*}
  $(\totalcof{\cat E},\totalweak{\cat E},\totalfib{\cat E})$ defines a
  model structure on the total category $\cat E$.
\end{claim*}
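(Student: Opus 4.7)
By Proposition \ref{prop:total-intertwined-wfs} and Corollary \ref{cor:intertwined-model}, the claim reduces to verifying that the class $\totalweak{\cat E} = \totalafib{\cat E} \circ \totalacof{\cat E}$ satisfies the 2-out-of-3 property. The plan is to characterize membership in $\totalweak{\cat E}$ by a fiber-level weak equivalence condition, then to deduce 2-out-of-3 in $\cat E$ from 2-out-of-3 in $\cat B$ (which is automatic, since the base image of any total weak equivalence is a composite of an acyclic cofibration and an acyclic fibration in $\cat B$) combined with 2-out-of-3 in a single well-chosen fiber.

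Concretely, given $f \from X \to Y$ above $u \from A \to B$ together with any factorization $u = qj$ in $\cat B$ with $j \in \class C \cap \class W$ and $q \in \class F \cap \class W$ through an object~$C$, let $\tilde f \from \push j X \to \pull q Y$ be the canonical fiber map in $\fiber{\cat E}{C}$ provided by Corollary~\ref{cor:pseudo-iso-in-fiber}. I would prove the characterization: $f \in \totalweak{\cat E}$ if and only if $\tilde f$ is a weak equivalence in $\fiber{\cat E}{C}$, and that this condition is independent of the chosen factorization of~$u$. The forward direction starts from a decomposition $f = rs$ with $s \in \totalacof{\cat E}$ and $r \in \totalafib{\cat E}$, takes the induced factorization $u = p(r)p(s)$, and uses Corollary~\ref{cor:pseudo-iso-in-fiber} to identify $\tilde f$ up to canonical fiber isomorphism with $\pullfact{r} \circ \pushfact{s}$, a composite of a fiber acyclic cofibration and a fiber acyclic fibration. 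The backward direction factors $\tilde f = \pi \circ \iota$ in the fiber and reassembles $s = \iota \circ \cocart{j}{X}$ and $r = \cart{q}{Y} \circ \pi$, producing a decomposition of $f$ that exhibits it as a total weak equivalence. Independence from the factorization then follows by comparing two factorizations of~$u$ through a common refinement obtained from a factorization of a suitable weak equivalence between the middle objects in~$\cat B$, with \eqref{hyp:hBC} supplying the mate that witnesses the comparison as a fiber weak equivalence.

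For 2-out-of-3, consider composable $f \from X \to Y$ and $g \from Y \to Z$ above $u \from A \to B$ and $v \from B \to D$, with two of $f, g, gf$ in $\totalweak{\cat E}$. Fix factorizations $u = q_f j_f$ through $C_f$ and $v = q_g j_g$ through $C_g$, and factor the weak equivalence $j_g q_f \from C_f \to C_g$ in~$\cat B$ as $q_0 \circ j_0$ through an intermediate object $C_0$. The resulting commutative square in~$\cat B$ (with horizontal arrows $q_f, q_0$ and vertical arrows $j_0, j_g$) falls under the hypothesis of~\eqref{hyp:hBC}, so its mate $\mu_Y \from \push{j_0} \pull{q_f} Y \to \pull{q_0} \push{j_g} Y$ is a weak equivalence in $\fiber{\cat E}{C_0}$. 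Using pseudofunctoriality and Corollary~\ref{cor:pseudo-iso-in-fiber}, one identifies the canonical fiber map $\widetilde{gf}$ attached to the factorization $vu = (q_g q_0)(j_0 j_f)$, up to canonical fiber isomorphism, with the composite $(\pull{q_0}\tilde g) \circ \mu_Y \circ (\push{j_0} \tilde f)$ in $\fiber{\cat E}{C_0}$. Since \eqref{hyp:weak-conservative} makes $\push{j_0}$ and $\pull{q_0}$ homotopically conservative, 2-out-of-3 in the fiber $\fiber{\cat E}{C_0}$ transfers to the three maps $\tilde f, \tilde g, \widetilde{gf}$, and via the characterization to $f, g, gf$ in $\totalweak{\cat E}$. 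The main obstacle throughout is the coherent bookkeeping of pseudofunctorial isomorphisms, both for the identification of $\widetilde{gf}$ with the prescribed composite and for the factorization-independence part of the characterization; the genuine homotopical content is cleanly concentrated in \eqref{hyp:hBC} (which produces $\mu_Y$) and in \eqref{hyp:weak-conservative} (which permits the transfer of weak equivalences back and forth between the relevant fibers along acyclic cofibrations and acyclic fibrations).
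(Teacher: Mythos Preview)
Your proposal is correct and follows essentially the same route as the paper: the reduction to 2-out-of-3 via the intertwined weak factorization systems, the characterization of $\totalweak{\cat E}$ through the canonical fiber map $\tilde f = \middlefact f j q$, the factorization-independence of that condition via a common refinement, and the final 2-out-of-3 argument decomposing $\widetilde{gf}$ as $(\pull{q_0}\tilde g)\circ \mu_Y \circ (\push{j_0}\tilde f)$ with $\mu_Y$ supplied by \eqref{hyp:hBC} and the transfer handled by \eqref{hyp:weak-conservative}. The only organizational difference is that the paper isolates the factorization-independence as a standalone proposition, established through two auxiliary lemmas treating separately the cases where $p(f)$ is itself an acyclic cofibration or an acyclic fibration; your sketch folds this into the main argument but the content is the same.
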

By proposition \ref{prop:total-intertwined-wfs}, we already know that
$(\totalacof{\cat E},\totalfib{\cat E})$ and
$(\totalcof{\cat E},\totalafib{\cat E})$ are intertwined weak
factorization systems. From this follows that, by corollary
\ref{cor:intertwined-model}, we only need to show that the class
$\totalweak{\cat E}$ of \define{total weak equivalences} satisfies the
2-out-of-3 property.

\medskip

A first step is to get a better understanding of the total weak
equivalences. For $f \from X \to Y$ in $\cat E$ such that $p(f) = vu$
for two composable morphisms $u\from pX \to C$ and $v\from C \to pY$
of $\cat B$, there is a unique morphism inside the fiber
$\fiber{\cat E} C$
\begin{displaymath}
  \middlefact f u v \from \push u X \to \pull v Y
\end{displaymath}
such that $f = \cart v Y \circ \middlefact f u v \circ \cocart u
X$. This morphism $\middlefact f u v$ can be constructed as
$\pullfact k$ where $k$ is the unique morphism above $v$ factorizing
$f$ through $\cocart u X$; or equivalently as $\pushfact \ell$ where
$\ell$ is the unique morphism above $u$ factorizing $f$ through
$\cart v Y$. This is summed up in the following commutative diagram:
\begin{displaymath}
  \begin{tikzcd}[row sep=large, column sep=huge]
    X \rar["\cocartz"] \drar[densely dashed,"\ell" swap] & \push u X
    \dar["\pushfact \ell = \middlefact f u v = \pullfact k"
    description] \drar[densely dashed,"k"] &
    \\
    & \pull v Y \rar["\cartz" swap] & Y
  \end{tikzcd}
\end{displaymath}

Notice that, in particular, a morphism $f$ of $\totalweak{\cat E}$ is
exactly a morphism of $\cat E$ for which \showcase{there exists} a
factorization $p(f) = qj$ with $j \in \class W \cap \class C$ and
$q \in \class W \cap \class F$ such that $\middlefact f j q$ is a weak
equivalence in the corresponding fiber. We shall strive to show that,
under our hypothesis \eqref{hyp:weak-conservative} and
\eqref{hyp:hBC}, a morphism $f$ of $\totalweak{\cat E}$ satisfies the
same property that $\middlefact f j q$ is a weak equivalence
\showcase{for all} such factorization $p(f) = qj$. This is the contain
of proposition \ref{prop:exists-forall}. We start by showing the
property in the particular case where $p(f)$ is an acyclic cofibration
(lemma \ref{lem:exists-forall-above-acof}) or an acyclic fibration
(lemma \ref{lem:exists-forall-above-afib}).
\begin{lemma}
  \label{lem:exists-forall-above-acof}%
  Suppose that $f\from X \to Y$ is a morphism of $\cat E$ such that
  $p(f)$ is an acyclic cofibration in $\cat B$. If $p(f) = qj$ with
  $q \in \class W \cap \class C$ and $j \in \class W \cap \class F$,
  then $\pushfact f$ is a weak equivalence if and only if
  $\middlefact f j q$ is a weak equivalence.
\end{lemma}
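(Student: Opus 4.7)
The plan is to connect $\pushfact{f}$ in the fiber $\fiber{\cat E}{pY}$ with $m := \middlefact{f}{j}{q}$ in the fiber $\fiber{\cat E}{C}$ through the push-pull adjunction at $q$. Writing $\varepsilon^{q}$ for the counit of $\push{q} \adjoint \pull{q}$ and $\phi \from \push{p(f)}X \to \push{q}\push{j}X$ for the pseudofunctoriality isomorphism, a short diagram chase using the cocartesian universal property and the identities $\cart{q}{Y} = \varepsilon^{q}_{Y} \circ \cocart{q}{\pull{q}Y}$ and $\push{q}(m) \circ \cocart{q}{\push{j}X} = \cocart{q}{\pull{q}Y} \circ m$ yields the key formula
\[
  \pushfact{f} \;=\; \varepsilon^{q}_{Y} \circ \push{q}(m) \circ \phi
  \qquad \text{in } \fiber{\cat E}{pY}.
\]

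By \eqref{hyp:weak-conservative}, the functor $\push{q}$ is homotopically conservative (since $q$ is an acyclic cofibration), so $\push{q}(m)$ is a weak equivalence in $\fiber{\cat E}{pY}$ if and only if $m$ is one in $\fiber{\cat E}{C}$. As $\phi$ is an isomorphism, the 2-out-of-3 property in the fiber model category $\fiber{\cat E}{pY}$ then reduces the statement to the claim that the counit $\varepsilon^{q}_{Y}$ is itself a weak equivalence in that fiber.

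For this I would apply \eqref{hyp:hBC} to the commutative square
\[
  \begin{tikzcd}
    pX \rar["j"] \dar["p(f)"swap] & C \dar["q"] \\
    pY \rar["\id_{pY}"swap] & pY
  \end{tikzcd}
\]
whose vertical legs are the acyclic cofibrations $p(f), q$ and whose horizontal legs are the acyclic fibrations $j, \id_{pY}$. The hypothesis gives that its mate $\mu \from \push{p(f)}\pull{j} \Rightarrow \push{q}$ is pointwise a weak equivalence; evaluating at $\pull{q}Y$ and invoking the canonical iso $\pull{j}\pull{q}Y \simeq \pull{p(f)}Y$, a naturality chase identifies $\varepsilon^{q}_{Y} \circ \mu_{\pull{q}Y}$ with $\varepsilon^{p(f)}_{Y}$ up to isomorphism. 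A parallel argument on the dual side, using the unit of $\push{j} \adjoint \pull{j}$ together with the observation that $\push{j}$ is \emph{also} homotopically conservative (deduced from $\push{p(f)} \simeq \push{q}\push{j}$ and the fact that both $\push{p(f)}$ and $\push{q}$ are homotopically conservative by \eqref{hyp:weak-conservative}), pins down $\varepsilon^{q}_{Y}$ as a weak equivalence and closes the argument.

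The crucial and most delicate step is the proof that $\varepsilon^{q}_{Y}$ is a weak equivalence: it is exactly here that the two hypotheses \eqref{hyp:hBC} and \eqref{hyp:weak-conservative} are interlocked in an essential way, whereas the remaining reductions amount to routine bookkeeping in the fibers of the bifibration.
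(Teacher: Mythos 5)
Your decomposition $\pushfact f = \varepsilon^q_Y \circ \push q(m) \circ \phi$ (with $m = \middlefact f j q$) is correct and is exactly the adjoint transpose of the decomposition $\middlefact f j q = \pull q(\tilde{\pushfact f}) \circ \eta^q_{\push j X}$ that the paper's proof works with: you work on the counit side, the paper works on the unit side. The two are in principle interchangeable, but the symmetry is broken by \eqref{hyp:weak-conservative}, and this is where the argument goes wrong.

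First, a warning about the hypotheses. The lemma statement is affected by a typo: the proof given in the paper, and the way the lemma is invoked later in Proposition~\ref{prop:exists-forall} (where $p(r)=h_f j''$ with $j''$ an acyclic cofibration and $h_f$ an acyclic fibration), both show that the intended reading is $j \in \class W \cap \class C$ and $q \in \class W \cap \class F$, that is, $p(f)$ factored as an acyclic \emph{cofibration} $j$ followed by an acyclic \emph{fibration} $q$ — the standard factorization of a weak equivalence. You took the statement at face value with the roles reversed. Under the intended reading, your second step already fails: $q$ is an acyclic fibration, so \eqref{hyp:weak-conservative} asserts that $\pull q$ is homotopically conservative, not $\push q$; the claim ``$\push q(m)$ is a weak equivalence iff $m$ is'' therefore has no backing. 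This is precisely why the paper's proof works with the unit and $\pull q$, which is the functor that \eqref{hyp:weak-conservative} makes homotopically conservative.

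Second, even granting your face-value reading, the argument that $\varepsilon^q_Y$ is a weak equivalence is incomplete. The \eqref{hyp:hBC} application is correctly set up: the mate $\mu\from\push{p(f)}\pull j \Rightarrow \push q$ of your square, evaluated at $\pull q Y$, satisfies $\varepsilon^q_Y \circ \mu_{\pull q Y} \cong \varepsilon^{p(f)}_Y$. But this only reduces the acyclicity of $\varepsilon^q_Y$ to the acyclicity of $\varepsilon^{p(f)}_Y$, which is not itself established by either hypothesis; the ``parallel argument on the dual side'' that is meant to close this loop is not spelled out, and it is not clear that it can be. Contrast this with the paper's argument: the mate of the square with legs $j$, $p(f)$ (vertical) and $\id_{pX}$, $q$ (horizontal), evaluated at $X$, is already the unit $\eta^q_{\push j X}$ up to the fiber isomorphism of Corollary~\ref{cor:pseudo-iso-in-fiber}, so \eqref{hyp:hBC} gives the needed weak equivalence directly, without an extra unresolved piece. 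The counit $\varepsilon^q_Y$ by contrast is not the mate of any \eqref{hyp:hBC}-admissible square, only related to one by a composite, and that is the structural reason your route stalls. Reorganizing the argument around the unit and $\pull q$ — as the paper does — both respects the corrected hypotheses and avoids the dangling claim.
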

\begin{proof}
  Since $p(f) = qj$, lemma \ref{lem:pseudo-push-iso} provides an
  isomorphism $\phi$ in the fiber $\fiber{\cat E}{pY}$ such that
  $\pushfact f = \tilde {\pushfact f} \phi$, where
  $\tilde {\pushfact f}$ is the morphism obtained by pushing in two
  steps:
  \begin{displaymath}
    \begin{tikzcd}
      X \rar["\cocartz"] \ar[rrr,bend left,"\cocartz"]
      \ar[drr,"f"{swap}] & \push j X \rar["\cocartz"]
      & \push q \push j X \dar["\tilde{\pushfact f}"] & \push
      {(qj)} X \dlar["\pushfact f"] \lar["\simeq","\phi"{swap}]
      \\
      & & Y &
    \end{tikzcd}
  \end{displaymath}
  By definition, $\middlefact f j q$ is the image of
  $\tilde {\pushfact f}$ under the natural bijection
  $\hom[\fiber{\cat E}{pY}]{\push q \push j X}{Y} \overset \simeq\to
  \hom[\fiber{\cat E}{pX}]{\push j X}{\pull q Y}$. So it can be
  written
  $\middlefact f j q = \pull q (\tilde {\pushfact f}) \circ
  \eta_{\push j X}$ using the unit $\eta$ of the adjunction
  $(\push q,\pull q)$. We can now complete the previous diagram as
  follow:
  \begin{displaymath}
    \begin{tikzcd}[row sep=large,column sep=large]
      X \rar["\cocartz"] \ar[rrr,bend left=20,"\cocartz"]
      \ar[drr,lightgray,"f"{very near start,swap}] & \push j X
      \dar[crossing over,"\eta"{fill=white}] \rar["\cocartz"]
      \ar[dd,crossing over,bend right=60,"\middlefact fjq" swap]&
      \push q \push j X \dar["\tilde{\pushfact f}"] & \push {(qj)} X
      \dlar["\pushfact f"] \lar["\simeq","\phi"{swap}]
      \\
      & \pull q \push q \push j X \dar["\pull q(\tilde{\pushfact f})"]
      \urar[crossing over,"\cartz" swap] & Y &
      \\
      & \pull q Y \urar["\cartz" swap] & &
    \end{tikzcd}
  \end{displaymath}
  Proving that $\eta_{\push j X}$ is a weak equivalence is then enough
  to conclude: in that case $\middlefact f j q$ is an weak equivalence
  if and only if $\pull q(\tilde {\pushfact f})$ is such by the
  two-of-three property ; $\pull q(\tilde {\pushfact f})$ is a weak
  equivalence if and only if $\tilde {\pushfact f}$ is a weak
  equivalence in $\fiber{\cat E}{pY}$ by \eqref{hyp:weak-conservative}
  ; and finally $\tilde {\pushfact f}$ is a weak equivalence if an
  only if $\pushfact f$ is such because they are isomorphic as arrows
  in $\fiber{\cat E}{pY}$.

  So it remains to show that $\eta_{\push j X}$ is a weak equivalence
  in its fiber. Since $p(f) = qj$, the following square commutes in
  $\cat B$:
  \begin{displaymath}
    \begin{tikzcd}
      pX \rar[equal,"\id {pX}"] \dar["j" swap]& pX \dar["qj"] \\
      C \rar["q" swap] & pY
    \end{tikzcd}
  \end{displaymath}
  This is a square of the correct form to apply \eqref{hyp:hBC}: hence
  the associated mate at component $X$
  \begin{displaymath}
    \mu_X \from \push j \pull {(\id {pX})} X \to \pull q \push {(qj)} X
  \end{displaymath}
  is a weak equivalence in the fiber $\fiber{\cat E} C$. Corollary
  \ref{cor:pseudo-iso-in-fiber} ensures that $\mu_X$ is isomorphic as
  arrow of $\fiber{\cat E}C$ to the unique fiber morphism that factors
  $\cart q {\push q \push j X}$ through $\cocart{q} {\push j X}$:
  \begin{displaymath}
    \begin{tikzcd}
      \push j X \rar["\cocartz"] \dar & \push q \push j X
      \\
      \pull q \push q \push j X \urar["\cartz" swap] &
    \end{tikzcd}
  \end{displaymath}
  This is exactly the definition of the unit $\eta$ at $\push j
  X$. Isomorphic morphisms being weak equivalences together,
  $\eta_{\push j X}$ is also acyclic in $\fiber{\cat E} C$.
\end{proof}

Of course, one gets the dual lemma by dualizing the proof that we let
for the reader to write down.
\begin{lemma}
  \label{lem:exists-forall-above-afib}%
  Let $f\from X \to Y$ a morphism of $\cat E$ such that $p(f)$ is an
  acyclic fibration in $\cat B$. If $p(f) = qj$ with
  $q \in \class W \cap \class C$ and $j \in \class W \cap \class F$,
  then $\pullfact f$ is a weak equivalence if and only if
  $\middlefact f j q$ is a weak equivalence.
\end{lemma}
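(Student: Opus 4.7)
The proof parallels that of lemma \ref{lem:exists-forall-above-acof}, with the roles of push and pull swapped throughout. First, lemma \ref{lem:pseudo-pull-iso} applied to the composite $p(f) = qj$ provides an isomorphism $\psi$ in the fiber $\fiber{\cat E}{pX}$ between $\pullfact f \from X \to \pull{(qj)} Y$ and the two-step pull $\tilde{\pullfact f} \from X \to \pull j \pull q Y$, so that $\pullfact f$ is a weak equivalence if and only if $\tilde{\pullfact f}$ is. A routine unwinding of definitions shows moreover that $\middlefact f j q$ admits the factorization
\[
\middlefact f j q \;=\; \epsilon_{\pull q Y} \circ \push j(\tilde{\pullfact f}),
\]
where $\epsilon$ denotes the counit of the adjunction $(\push j, \pull j)$. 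This is the dual of the identity $\middlefact f j q = \pull q(\tilde{\pushfact f}) \circ \eta_{\push j X}$ exploited in the previous proof.

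From here, three observations conclude the argument. First, by \eqref{hyp:weak-conservative}, the left adjoint $\push j$ is homotopically conservative, so $\push j(\tilde{\pullfact f})$ is a weak equivalence if and only if $\tilde{\pullfact f}$ is. Second, the counit component $\epsilon_{\pull q Y}$ is a weak equivalence in the fiber $\fiber{\cat E}{C}$: via corollary \ref{cor:pseudo-iso-in-fiber} and the pseudo-functoriality isomorphism $\pull{(qj)} \simeq \pull j \pull q$, one identifies $\epsilon_{\pull q Y}$, up to an isomorphism of arrows in that fiber, with the $Y$-component of the mate associated to the commutative square
\[
\begin{tikzcd}
pX \rar["qj"] \dar["j" swap] & pY \dar[equal] \\
C \rar["q" swap] & pY
\end{tikzcd}
\]
whose vertical arrows $j$ and $\id{pY}$ belong to $\class W \cap \class C$ and whose horizontal arrows $qj = p(f)$ and $q$ belong to $\class W \cap \class F$. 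Hypothesis \eqref{hyp:hBC} then forces this mate, and hence $\epsilon_{\pull q Y}$, to be a weak equivalence. Third, the 2-out-of-3 property of weak equivalences in $\fiber{\cat E}{C}$ applied to the displayed factorization of $\middlefact f j q$ delivers the equivalence $\middlefact f j q \in \class W$ if and only if $\tilde{\pullfact f} \in \class W$, completing the proof.

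The main obstacle lies in the identification of the counit $\epsilon_{\pull q Y}$ with the $Y$-component of the mate of the chosen square. Concretely, one must unwind the construction of the mate --- which factors the composite $\pull{(qj)} Y \to Y$ through the cartesian arrow $\cart q Y$ and then pushes the result along $j$ --- and transport it along the pseudo-functoriality isomorphism $\pull{(qj)} Y \simeq \pull j \pull q Y$ so that it matches the universal property characterizing $\epsilon_{\pull q Y}$. Once this identification is granted, the remainder of the argument is a direct dualization of the original.
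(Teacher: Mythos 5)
Your proof is correct and is exactly the dualization the paper delegates to the reader: you mirror lemma \ref{lem:exists-forall-above-acof} step by step, replacing the unit of $(\push q,\pull q)$ by the counit of $(\push j,\pull j)$, using \eqref{hyp:weak-conservative} on $\push j$ in place of $\pull q$, and applying \eqref{hyp:hBC} to the square with $j$, $\id_{pY}$ vertical and $qj$, $q$ horizontal, whose mate at $Y$ identifies with $\epsilon_{\pull q Y}$ up to a fiber isomorphism via corollary \ref{cor:pseudo-iso-in-fiber}. The factorization $\middlefact f j q = \epsilon_{\pull q Y} \circ \push j(\tilde{\pullfact f})$ is the correct transpose, as one checks directly against the defining universal property, and the rest follows by 2-out-of-3 in $\fiber{\cat E}{C}$ exactly as you state.
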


We shall now prove the key proposition of this section.
\begin{proposition}
  \label{prop:exists-forall}%
  Let $f \from X \to Y$ in $\cat E$. If $p(f) = qj = q'j'$ for some
  $j,j \in \class W \cap \class C$ and
  $q,q' \in \class W \cap \class F$, then $\middlefact f j q$ is a
  weak equivalence if and only if $\middlefact f {j'} {q'}$ is a weak
  equivalence.
\end{proposition}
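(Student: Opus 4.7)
The plan is to bridge the two factorizations $(j,q)$ and $(j',q')$ by a third, common refinement, and to reduce each of the two comparisons to a case already handled by Lemmas~\ref{lem:exists-forall-above-acof} and~\ref{lem:exists-forall-above-afib}.

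To build the refinement, I would first invoke the lifting axiom on the commutative square
\begin{displaymath}
  \begin{tikzcd}
    A \rar["j"] \dar["j'"swap] & C \dar["q"] \\
    C' \rar["q'"swap] & B
  \end{tikzcd}
\end{displaymath}
to obtain a diagonal filler $s\from C'\to C$ with $sj'=j$ and $qs=q'$; this is available because $j'$ is an acyclic cofibration and $q$ is a fibration. The relation $qs=q'$ and the two-out-of-three property force $s$ to be a weak equivalence, so a factorization $s = q_s j_s$ in $\cat B$ produces an acyclic cofibration $j_s\from C'\to D$ followed by an acyclic fibration $q_s\from D\to C$. This yields a third factorization $p(f) = (qq_s)\circ(j_sj')$, whose two components are respectively a composite of acyclic cofibrations and a composite of acyclic fibrations.

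Next I would compare $(j,q)$ to this refinement via Lemma~\ref{lem:exists-forall-above-acof}. Let $g\from X\to \pull qY$ be the unique arrow above $j$ with $\cart qY\circ g = f$. The uniqueness clause of the cocartesian property of $\cocart jX$ identifies $\pushfact g$ with $\middlefact f{j}{q}$. On the codomain side, the composite $\cart qY\circ\cart{q_s}{(\pull qY)}$ is cartesian above $qq_s$, and the resulting pseudofunctoriality isomorphism $\phi\from \pull{q_s}{\pull{q}{Y}} \to \pull{(qq_s)}{Y}$ in the fiber $\fiber{\cat E}D$ identifies $\middlefact g{(j_sj')}{q_s}$ with $\middlefact f{(j_sj')}{(qq_s)}$ up to $\phi$. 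Since $p(g)=j$ is an acyclic cofibration and $j = q_s\circ(j_sj')$ is a factorization into an acyclic cofibration followed by an acyclic fibration, the lemma applies to $g$ and yields that $\middlefact f{j}{q}$ is a weak equivalence if and only if $\middlefact f{(j_sj')}{(qq_s)}$ is.

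Symmetrically, I would let $g''\from \push{j'}{X}\to Y$ be the unique arrow above $q'$ with $g''\circ\cocart{j'}X = f$. The analogous chase gives $\pullfact{g''}=\middlefact f{j'}{q'}$ and identifies $\middlefact{g''}{j_s}{(qq_s)}$ with $\middlefact f{(j_sj')}{(qq_s)}$ via pseudofunctoriality of the pushforward. Since $p(g'')=q'$ is an acyclic fibration and $q' = (qq_s)\circ j_s$ is a factorization of the right form, the dual Lemma~\ref{lem:exists-forall-above-afib} applied to $g''$ gives that $\middlefact f{j'}{q'}$ is a weak equivalence if and only if $\middlefact f{(j_sj')}{(qq_s)}$ is. Chaining the two equivalences through the common refinement proves the proposition. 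The only delicate step is the bookkeeping of the identifications between the various middle-factor notations, which reduces to the uniqueness of (co)cartesian lifts together with pseudofunctoriality of push and pull; it hides no new homotopical content beyond what is already packaged inside the two one-sided lemmas and their reliance on~\eqref{hyp:hBC} and~\eqref{hyp:weak-conservative}.
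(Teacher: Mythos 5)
Your proof is correct and follows essentially the same strategy as the paper's: construct a common refinement of the two factorizations by lifting in the commutative square, factoring the resulting weak equivalence, and then reducing each comparison to Lemmas~\ref{lem:exists-forall-above-acof} and~\ref{lem:exists-forall-above-afib}, with the bookkeeping handled by pseudofunctoriality of push and pull. The only cosmetic difference is that the paper takes the filler in the other direction (from $C$ to $C'$ rather than from $C'$ to $C$) and accordingly applies the two one-sided lemmas to the arrows above $j'$ and $q$ rather than above $j$ and $q'$; this is the same argument with the roles of the two factorizations swapped.
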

\begin{proof}
  By hypothesis the following square commutes in $\cat B$:
  \begin{displaymath}
    \begin{tikzcd}
      pX \rar["j'"] \dar["j" swap] & C' \dar["q'"] \\
      C \rar["q" swap] & pY
    \end{tikzcd}
  \end{displaymath}
  Since $j$ is an acyclic cofibration and $q'$ a (acyclic) fibration,
  there is a filler $h \from C \to C'$ of the previous square, that is
  a weak equivalence by the 2-out-of-3 property. Hence it can be
  factored $h=h_fh_c$ as an acyclic cofibration followed by an acyclic
  fibration in $\cat B$. Write $j'' = h_cj$ and $q'' = q'h_f$ which
  are respectively an acyclic cofibration and an acyclic fibration as
  composite of such, and produce a new factorization of
  $p(f) = q''j''$.
  \begin{displaymath}
    \begin{tikzcd}
      pX \ar[rr,"j'"] \ar[dd,"j" swap] \drar["j''"] & & C'
      \ar[dd,"q'"]
      \\
      & C'' \urar["h_f"] \drar["q''"] &
      \\
      C \urar["h_c"] \ar[rr,"q" swap] & & pY
    \end{tikzcd}
  \end{displaymath}
  Write $r$ for the composite
  $\middlefact f {j'} {q'} \circ \cocart X {j'} \from X \to \push {j'}
  X \to \pull {q'} Y$. Then $r$ is above the acyclic cofibration
  $j' = h_fj''$ and lemma \ref{lem:exists-forall-above-acof} can be
  applied: $\pushfact r$ is a weak equivalence in $\fiber{\cat E}{C'}$
  if and only if
  $\middlefact r {j''} {h_f} \from \push {j''} X \to \pull {(h_f)}
  \pull{q'} Y$ is a weak equivalence in $\fiber{\cat E}{C''}$. And by
  very definition $\pushfact r = \middlefact {j'} f {q'}$. So
  $\middlefact {j'} f {q'}$ is a weak equivalence in
  $\fiber{\cat E}{C'}$ if and only if $\middlefact r {j''} {h_f}$ is
  such in $\fiber{\cat E}{C''}$.
  
  Similarly write $s$ for the composite
  $\cart q Y \circ \middlefact f j q \from \push j X \to \pull q Y\to
  Y$. Then $s$ is above the acyclic fibration $q = q''h_c$ and lemma
  \ref{lem:exists-forall-above-afib} can be applied: $\pullfact s$ is
  a weak equivalence in $\fiber{\cat E}{C}$ if and only if
  $\middlefact s {h_c} {q''} \from \push {(h_c)} \push j X \to \pull
  {q''} Y $ is a weak equivalence (in $\fiber{\cat E}{C''}$). And by
  very definition $\pullfact s = \middlefact {j} f {q}$. So
  $\middlefact {j} f {q}$ is a weak equivalence in $\fiber{\cat E}{C}$
  if and only if $\middlefact s {h_c} {q''}$ is such in
  $\fiber{\cat E}{C''}$.

  Now recall that $j'' = h_cj$ and $q'' = q'h_f$. By lemmas
  \ref{lem:pseudo-push-iso} and \ref{lem:pseudo-pull-iso}, there
  exists isomorphisms $\push {j''} X \simeq \push{h_c}\push{j} X$ and
  $\pull{q''}Y \simeq \pull {h_f} \pull {q'} Y$ in fiber
  $\fiber{\cat E}{C''}$ making the following commute :
  \begin{displaymath}
    \begin{tikzcd}[row sep = large]%
      \push {h_c} \push {j} X \drar["\middlefact s {h_c} {q''}" swap]
      \rar["\simeq"] & \push{j''} X \dar["\middlefact f {j''} {q''}"
      description] \drar["\middlefact r {j''} {h_f}"] &
      \\
      & \pull{q''} Y \rar["\simeq"] & \pull {h_f} \pull {q'} Y
    \end{tikzcd}
  \end{displaymath}
  In particular, the morphisms $\middlefact r {j''} {h_f}$ and
  $\middlefact s {h_c} {q''}$ are weak equivalences together. We
  conclude the argument: $\middlefact f {j'} {q'}$ is a weak
  equivalence in $\fiber{\cat E}{C'}$ if and only if
  $\middlefact r {j''} {h_f}$ is such in $\fiber{\cat E}{C''}$ if and
  only if $\middlefact s {h_c} {q''}$ is such in $\fiber{\cat E}{C''}$
  if and only if $\middlefact f j q$ is a weak equivalence in
  $\fiber{\cat E}C$.
\end{proof}
The previous result allow the following ``trick'': to prove that a map
$f$ of $\cat E$ is in $\totalweak{\cat E}$, you just need to find
\showcase{some} factorization $p(f) = qj$ as an acyclic cofibration
followed by an acyclic fibration such that $\middlefact f j q$ is
acyclic inside its fiber (this is just the definition of
$\totalweak{\cat E}$ after all); but if given that
$f \in \totalweak{\cat E}$, you can use that $\middlefact f j q$ is a
weak equivalence for \showcase{every} admissible factorization of
$p(f)$!

\medskip%

We shall use that extensively in the proof of the two-out-of-three
property for $\totalweak{\cat E}$. This will conclude the proof of
sufficiency in theorem \ref{thm:main}.
\begin{proposition}
  The class $\totalweak{\cat E}$ has the 2-out-of-3 property.
\end{proposition}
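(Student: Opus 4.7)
The plan is to use Proposition~\ref{prop:exists-forall}, which frees us to check membership in $\totalweak{\cat E}$ using any acyclic-cofibration/acyclic-fibration factorization of the base morphism. Given composable $f\from X \to Y$ and $g\from Y \to Z$ in $\cat E$, I would fix factorizations $p(f) = q_1 j_1$ and $p(g) = q_2 j_2$ in $\cat B$ with $j_1,j_2 \in \class C \cap \class W$ and $q_1,q_2 \in \class F \cap \class W$, and then factor $j_2 q_1 = q_3 j_3$ through an intermediate object $D$ with $j_3 \in \class C \cap \class W$ and $q_3 \in \class F \cap \class W$. Setting $J = j_3 j_1$ and $Q = q_2 q_3$ yields an acof-afib factorization of $p(gf)$, and Proposition~\ref{prop:exists-forall} then reduces the task to checking whether $\middlefact{(gf)}{J}{Q}$ is a weak equivalence in $\fiber{\cat E}{D}$.

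The first substantive step is to identify $\middlefact{(gf)}{J}{Q}$ in terms of $\middlefact{f}{j_1}{q_1}$ and $\middlefact{g}{j_2}{q_2}$. The idea is to unfold $gf$ as the six-fold composite coming from $f = \cart{q_1}{Y}\,\middlefact{f}{j_1}{q_1}\,\cocart{j_1}{X}$ and $g = \cart{q_2}{Z}\,\middlefact{g}{j_2}{q_2}\,\cocart{j_2}{Y}$; to observe that $\cocart{j_2}{Y}\,\cart{q_1}{Y}$ lies above $j_2 q_1 = q_3 j_3$ and thus factors as $\cart{q_3}{\push{j_2}{Y}} \, \nu_Y \, \cocart{j_3}{\pull{q_1}{Y}}$ for a unique $\nu_Y \from \push{j_3}\pull{q_1}{Y} \to \pull{q_3}\push{j_2}{Y}$ in $\fiber{\cat E}{D}$, which is precisely the mate of the square $j_2 q_1 = q_3 j_3$; and to use the pseudofunctoriality isomorphisms of Lemmas~\ref{lem:pseudo-push-iso} and~\ref{lem:pseudo-pull-iso} to collapse $\cart{q_2}{Z}\,\cart{q_3}{\pull{q_2}{Z}}$ into $\cart{Q}{Z}$ and $\cocart{j_3}{\push{j_1}{X}}\,\cocart{j_1}{X}$ into $\cocart{J}{X}$. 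The outcome is that, up to isomorphism in $\fiber{\cat E}{D}$, $\middlefact{(gf)}{J}{Q}$ equals the composite $\pull{q_3}(\middlefact{g}{j_2}{q_2}) \circ \nu_Y \circ \push{j_3}(\middlefact{f}{j_1}{q_1})$.

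Once this identification is in hand, the hypotheses do the remaining work: the square $j_2 q_1 = q_3 j_3$ has acyclic cofibrations $j_2, j_3$ and acyclic fibrations $q_1, q_3$ arranged in exactly the configuration required by \eqref{hyp:hBC}, so $\nu_Y$ is a weak equivalence, and by \eqref{hyp:weak-conservative} the functors $\push{j_3}$ and $\pull{q_3}$ are homotopically conservative. The case $f, g \in \totalweak{\cat E}$ is then immediate. In the case $f, gf \in \totalweak{\cat E}$, the base morphism $p(g)$ is a weak equivalence by 2-out-of-3 in $\cat B$, so a factorization $p(g) = q_2 j_2$ with both pieces acyclic exists; 2-out-of-3 in $\fiber{\cat E}{D}$ applied to the composite above forces $\pull{q_3}(\middlefact{g}{j_2}{q_2})$ to be a weak equivalence, and conservativity of $\pull{q_3}$ lifts this to $\middlefact{g}{j_2}{q_2}$, yielding $g \in \totalweak{\cat E}$; the case $g, gf \in \totalweak{\cat E}$ is symmetric using conservativity of $\push{j_3}$. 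The main obstacle is the identification formula itself: carefully tracking lifts across the six base arrows and recognizing the mate $\nu_Y$ in the resulting middle piece forms the technical heart, after which the three cases of 2-out-of-3 reduce to a transparent triple application of 2-out-of-3 in the fiber $\fiber{\cat E}{D}$.
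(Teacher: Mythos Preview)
Your proposal is correct and follows essentially the same approach as the paper: factor the two base morphisms, refactor the middle composite $j_2 q_1$ through $D$, identify $\middlefact{(gf)}{J}{Q}$ up to isomorphism as $\pull{q_3}(\middlefact{g}{j_2}{q_2}) \circ \nu_Y \circ \push{j_3}(\middlefact{f}{j_1}{q_1})$ with $\nu_Y$ the mate of the refactoring square, and then apply \eqref{hyp:hBC} and \eqref{hyp:weak-conservative} together with fiberwise 2-out-of-3. The paper's proof differs only in notation (its $j,j',j'',q,q',q''$ and $i,r$ are your $j_1,j_2,j_3,q_1,q_2,q_3$ and $J,Q$) and in packaging the pseudofunctoriality step via Corollary~\ref{cor:pseudo-iso-in-fiber} rather than invoking Lemmas~\ref{lem:pseudo-push-iso} and~\ref{lem:pseudo-pull-iso} directly.
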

\begin{proof}
  We suppose given a commutative triangle $h = gf$ in the total
  category $\cat E$, and we proceed by case analysis.

  First case: suppose that $f,g \in \totalweak{\cat E}$, and we want
  to show that $h \in \totalweak{\cat E}$. Since $f$ and $g$ are
  elements of $\totalweak {\cat E}$, there exists a pair of
  factorizations $p(f) = qj$ and $p(g) = q'j'$ with $j,j'$ acyclic
  cofibrations and $q,q'$ acyclic fibrations of $\cat B$ such that
  both $\middlefact f j q$ and $\middlefact g {q'} {j'}$ are weak
  equivalences in their respective fibers. The weak equivalence $j'q$
  of $\cat B$ can be factorized as $q''j''$ with $j''$ acyclic
  cofibration and $q''$ acyclic fibration. We write $i = j''j$ and
  $r = q'q''$ and we notice that $p(h) = ri$, as depicted below.
  \begin{equation}
    \label{eq:decompose-pf-and-pg-acyclic}%
    \begin{tikzcd}
      pX \ar[rr,bend left=40, "i"] \rar["j"] \drar[lightgray,"p(f)" swap]
      \ar[ddrr,lightgray,bend left,"p(h)"] & A
      \dar["q"{fill=white},crossing over] \rar["j''"] & C \dar["q''"]
      \ar[dd,bend left=50,"r"]
      \\
      & pY \rar["j'"{fill=white},crossing over] \drar[lightgray,"p(g)"
      swap] & B \dar["q'"]
      \\
      & & pZ
    \end{tikzcd}
  \end{equation}
  Since $i$ is an acyclic cofibration and $r$ is an acyclic fibration,
  it is enough to show that
  $\middlefact h i r \from \push i X \to \pull r Y$ is a weak
  equivalence in $\fiber{\cat E}C$ in order to conclude that
  $h \in \totalweak{\cat E}$. Since $i = j''j$ and $r=q'q''$,
  corollary \ref{cor:pseudo-iso-in-fiber} states that it is equivalent
  to show that the isomorphic arrow
  $\tilde h \from \push {j''} \push j X \to \pull {q''} \pull {q'}$ is
  a weak equivalence, where $\tilde h$ is defined as the unique arrow
  in fiber $\fiber{\cat E}C$ making the following commute:
  \begin{displaymath}
    \begin{tikzcd}
      X \rar["\cocartz"] \ar[drrrr,"h" near end,rounded corners,to
      path={(\tikztostart.south) |- ([yshift=1em]\tikztotarget.north)
        -- (\tikztotarget)}] & \push j X \rar["\cocartz"] & \push
      {j''} {\push j X} \ar[d,"\tilde h"{fill=white},crossing over] &
      &
      \\
      & & \pull{q''} \pull{q'} Z \rar["\cartz" swap] & \pull {q'} Z
      \rar["\cartz" swap] & Z
    \end{tikzcd}
  \end{displaymath}
  Since $h=gf$, such an arrow $\tilde h$ is given by the composite
  \begin{displaymath}
    \begin{tikzcd}[column sep=huge]
      \push {j''} \push {j'} X \rar[" \push {j''}(\middlefact f {j}
      {q})"] & \push {j''} \pull q Y \rar["\mu_Y"] &
      \pull{q''}\push{j'}Y \rar["\pull {q''}(\middlefact g {j'}
      {q'})"] & \pull {q''} \pull {q'} Z
    \end{tikzcd}
  \end{displaymath}
  where $\mu_Y$ is the component at $Y$ of the mate
  $\mu \from \push{j''}\pull q \to\pull{q''}\push{j'}$ of the
  commutative square $q''j'' = j'q$ of $\cat B$ (see diagram
  \eqref{eq:decompose-pf-and-pg-acyclic} above).
  \begin{displaymath}
    \begin{tikzcd}%
      X \ar[rrrr,lightgray,"f",bend left=20] \rar["\cocartz"] & \push
      j X \rar["\cocartz"] \dar["\middlefact f j q"]& \push {j''}
      {\push j X} & & \color{lightgray} Y \ar[ddd,lightgray,"g",bend
      left=20]
      \\
      & \pull q Y \ar[urrr,"\cartz" near end,lightgray,bend left=10]
      \rar["\cocartz"{fill=white}] & \push{j''} \pull q Y
      \dar["\mu_Y"] \uar["\push {j''} (\middlefact f j
      q)"{fill=white,swap},crossing over,leftarrow] & &
      \\
      & & \pull{q''} \push{j'} Y \rar["\cartz" swap] \dar["\pull
      {q''}(\middlefact g {j'} {q'})" swap] & \push {j'} Y
      \dar["\middlefact g {j'} {q'}" swap]
      \ar[uur,lightgray,"\cocartz",leftarrow,bend left=10] &
      \\
      & & \pull{q''} \pull{q'} Z \rar["\cartz" swap] & \pull {q'} Z
      \rar["\cartz" swap] & Z
    \end{tikzcd}
  \end{displaymath}
  We can conclude that $\tilde h$ is a weak equivalence in
  $\fiber{\cat E}C$ because it is a composite of such. Indeed:
  \begin{itemize}
  \item hypothesis \eqref{hyp:hBC} can be applied to the square
    $q''j''=j'q$, and so $\mu_Y$ is a weak equivalence in
    $\fiber{\cat E}C$,
  \item and by hypothesis \eqref{hyp:weak-conservative}, the functors
    $\push{j''}$ and $\pull {q''}$ maps the weak equivalences
    $\middlefact f j q$ and $\middlefact g {j'} {q'}$ to weak
    equivalences in $\fiber{\cat E}C$.
  \end{itemize}

  Suppose now that $f$ and $h$ are in $\totalweak {\cat E}$ and we
  will show that $g$ also is. Since $p(f)$ and $p(h)$ are weak
  equivalences in $\cat B$, we can use the two-out-of-three property
  of $\class W$ to deduce that also $p(g)$ is. By hypothesis,
  $p(f) = qj$ with $j\in \class C \cap \class W$ and
  $q \in \class F \cap \class W$ and $\middlefact f j q$ a weak
  equivalence. Also write $p(g) = q'j'$ for some
  $j'\in \class C \cap \class W$ and $q' \in \class F \cap \class
  W$. We are done if we show that $\middlefact g {j'} {q'}$ is a weak
  equivalence. But in that situation, one can define $j'',q'',i,r,$
  and $\tilde h$ as before. So we end up with the same big diagram,
  except that this time $\push {j''} (\middlefact f j q)$, $\mu_Y$ and
  the composite $\tilde h$ are weak equivalences of
  $\fiber{\cat E} C$, yielding $\pull {q''}(\middlefact g {j'} {q'})$
  as a weak equivalence by the 2-out-of-3 property. But $\pull {q''}$
  being homotopically conservative by \eqref{hyp:weak-conservative},
  this shows that $\middlefact g {j'} {q'}$ is a weak equivalence in
  $\fiber{\cat E}B$.

  The last case, where $g$ and $h$ are in $\totalweak{\cat E}$ is
  strictly dual.
\end{proof}

\section{Illustrations}
\label{sec:examples}

Since the very start, our work is motivated by the idea that the Reedy
model structure can be reconstructed by applying a series of
Grothendieck constructions of model categories. The key observation is
that the notion of latching and matching functors define a bifibration
at each step of the construction of the model structure. We explain in
\ref{subsec:reedy} how the Reedy construction can be reunderstood from
our bifibrational point of view. In section \ref{subsec:gen-reedy}, we
describe how to adapt to express generalized Reedy constructions in a
similar fashion. In section \ref{subsec:versus-hp-rs}, we recall the
previous notions of bifibration of model categories appearing in the
literature and, although all of them are special cases of Quillen
bifibrations, we indicate why they do not fit the purpose.

\subsection{A bifibrational view on Reedy model structures}
\label{subsec:reedy}

Recall that a \define{Reedy category} is a small category $\cat R$
together with two subcategories $\cat R^+$ and $\cat R^-$ and a degree
function $d \from \ob{\cat R} \to \lambda$ for some ordinal $\lambda$
such that
\begin{itemize}
\item every morphism $f$ admits a unique factorization $f = f^+f^-$
  with $f^- \in \cat R^-$ and $f^+ \in \cat R^+$,
\item non-identity morphisms of $\cat R^+$ strictly raise the degree
  and those of $\cat R^-$ strictly lower it.
\end{itemize}
For such a Reedy category, let $\deginf{\cat R} \mu$ denote the full
subcategory spanned by objects of degree strictly less than $\mu$. In
particular, $\cat R = \deginf{\cat R}\lambda$. Remark also that every
$\deginf{\cat R}\mu$ inherits a structure of Reedy category from
$\cat R$.

We are interested in the structure of the category of diagrams of
shape $\cat R$ in a complete and cocomplete category $\cat C$. The
category $\cat C$ is in particular tensored and cotensored over
$\Set$, those being respectively given by
\begin{displaymath}
  S \tens C = \coprod_{s \in S} C,
  \qquad
  S \cotens C = \prod_{s \in S} C,
  \qquad S\in\Set, C\in \cat C.
\end{displaymath}
For every $r \in \cat R$ of degree $\mu$, a diagram
$X \from \deginf{\cat R}\mu \to \cat C$ induces two objects in
$\cat C$, called the \define{latching} and \define{matching} objects
of $X$ at $r$, and respectively defined as:
\begin{displaymath}
  \latching r X =
  \colimend{s \in \deginf{\cat R}\mu} \hom[\cat R] s r \tens X_s
  , \qquad
  \matching r X =
  \limend{s \in \deginf{\cat R}\mu} \hom[\cat R] r s \cotens X_s
\end{displaymath}
By abuse, we also denote $\latching r X$ and $\matching r X$ for the
latching and matching objects of the restriction to
$\deginf{\cat R}\mu$ of some
$X \from \deginf{\cat R}\kappa \to \cat C$ with $\kappa \geq \mu$. In
particular, when $\kappa = \lambda$, $X$ is a diagram of shape the
entire category $\cat R$ and we retrieve the textbook notion of
latching and matching objects (see for instance
\cite{hovey:model-cat}). Universal properties of limits and colimits
induce a family of canonical morphisms
$\alpha_r \from \latching r X \to \matching r X$, which can also be
understood in the following way. First, one notices that the two
functors defined as $\deginf{\cat R}{\mu+1} \to \cat C$
\begin{displaymath}
  r \mapsto \left\{
    \begin{aligned}
      X_r \ &\text{if $d(r) < \mu$}\\
      \latching r X \ &\text{if $d(r) = \mu$}\\
    \end{aligned}
  \right.
  ,\qquad
  r \mapsto \left\{
    \begin{aligned}
      X_r \ &\text{if $d(r) < \mu$}\\
      \matching r X \ &\text{if $d(r) = \mu$}\\
    \end{aligned}
  \right.
\end{displaymath}
are the skeleton and coskeleton $X$, which provide a left and a right
Kan extensions $X$ along the inclusion
$\deginf i \mu \from \deginf{\cat R}{\mu} \to \deginf{\cat
  R}{\mu+1}$. We will write these two functors $\latching \mu X$ and
$\matching \mu X$ respectively. The family of morphisms $\alpha_r$
then describes the unique natural transformation
$\alpha \from \latching \mu X \to \matching \mu X$ that restrict to
the identity on $\deginf {\cat R} \mu$.

The following property is, in our opinion, the key feature of Reedy
categories.
\begin{proposition}
  \label{prop:key-feature-reedy}%
  Extensions of a diagram $X \from \deginf{\cat R}\mu \to \cat C$ to
  $\deginf{\cat R}{\mu+1}$ are in one-to-one correspondence with
  families of factorizations of the $\alpha_r$'s
  \begin{displaymath}
    \left( \latching r X \to \bullet \to \matching r X \right)_{r \in \cat R,d(r) = \mu}
  \end{displaymath}
\end{proposition}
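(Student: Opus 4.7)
The plan is to leverage the identification of the functors $\latching \mu X$ and $\matching \mu X$ with the left and right Kan extensions of $X$ along the inclusion $\deginf i \mu \from \deginf{\cat R}\mu \hookrightarrow \deginf{\cat R}{\mu+1}$, together with the fact that $\alpha$ is the unique natural transformation between them whose restriction to $\deginf{\cat R}\mu$ is the identity. Since $\deginf{\cat R}{\mu+1}$ extends $\deginf{\cat R}\mu$ only by objects of degree exactly $\mu$, giving an extension $\tilde X$ amounts to choosing, for each such $r$, an object $\tilde X_r \in \cat C$ together with a compatible action on the morphisms into and out of $r$ from objects of strictly lower degree.

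For the forward map, given an extension $\tilde X$, the family of maps $\tilde X(s \to r) \from X_s \to \tilde X_r$ indexed by morphisms $s \to r$ with $d(s) < \mu$ forms a cocone that factors uniquely through $\latching r X$ as a morphism $\ell_r \from \latching r X \to \tilde X_r$; dually, the maps $\tilde X(r \to s) \from \tilde X_r \to X_s$ assemble into a cone that yields $m_r \from \tilde X_r \to \matching r X$. Functoriality of $\tilde X$ applied to zigzags $s \to r \to s'$ with $d(s), d(s') < \mu$ translates into the identity $\pi_{s'} \circ m_r \ell_r \circ \iota_s = X(s \to s')$ for every such pair of composable morphisms, where $\iota_s$ and $\pi_{s'}$ denote the canonical coprojection into $\latching r X$ and projection out of $\matching r X$. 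By the universal characterization of $\alpha_r$ recalled just before the statement, this forces $m_r \ell_r = \alpha_r$, producing the desired factorization of $\alpha_r$.

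For the backward map, given families of factorizations $\latching r X \to C_r \to \matching r X$ of $\alpha_r$ for each $r$ of degree $\mu$, I would set $\tilde X_r := C_r$ and define the action of $\tilde X$ on a morphism $f = f^+ f^-$ through its Reedy factorization: a lowering morphism out of $r$ is sent through $m_r$ followed by the appropriate matching projection, and a raising morphism into $r'$ is sent through the appropriate latching coprojection followed by $\ell_{r'}$. The main obstacle is the verification of functoriality of this prescription; by case analysis on the degrees of source, intermediate, and target, the only nontrivial compatibility reduces precisely to the identity $\pi_{s'} \circ m_r \ell_r \circ \iota_s = X(s \to s')$, which is guaranteed by the hypothesis $m_r \ell_r = \alpha_r$. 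The two assignments are then mutually inverse, essentially by the uniqueness clauses in the universal properties of $\latching r X$ as a colimit and $\matching r X$ as a limit.
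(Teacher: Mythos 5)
Your proposal follows essentially the same two-step argument as the paper: extract the factorization of $\alpha_r$ from an extension via the universal properties of the colimit $\latching r X$ and limit $\matching r X$, and conversely build the extension from a family of factorizations by using the Reedy factorization $f = f^+f^-$ to route morphisms through a matching projection and a latching coprojection. The opening appeal to the Kan extension description of $\latching \mu$ and $\matching \mu$ is motivating scaffolding rather than a genuine departure — the working argument is the same concrete cocone/cone manipulation. One small caveat: the claim that functoriality reduces \emph{precisely} to the single identity $\pi_{s'}\circ m_r\ell_r\circ\iota_s = X(s\to s')$ is a bit optimistic; the full verification also needs the compatibility of latching coprojections and matching projections with morphisms of $\deginf{\cat R}\mu$ (to compress the re-factorization $g^-f^+ = h^+h^-$ that appears when chaining two morphisms through $\mu$-degree objects), together with uniqueness of the Reedy factorization for well-definedness. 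The paper glosses over this at about the same level (``Well-definition and functoriality ... are following from uniqueness in the factorization property''), so this is a shared elision rather than a gap unique to you.
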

\begin{proof}
  One direction is easy. Every extension
  $\hat X \from \deginf{\cat R}{\mu+1} \to \cat C$ of $X$ produces
  such a family of factorizations, but it has nothing to do with the
  structure of Reedy category: for every $r$ of degree $\mu$ in
  $\cat R$, the functoriality of $\hat X$ ensures that there is a
  coherent family of morphisms $X_s = \hat X_s \to \hat X_r$ for each
  arrow $s \to r$, and symetrically a coherent family of morphisms
  $\hat X_r \to \hat X_{s'} = X_{s'}$ for each arrow $r \to s'$. Hence
  the factorization of $\alpha_r$ given by the universal properties of
  limits and colimits
  \begin{displaymath}
    \latching r X \to \hat X_r \to \matching r X
  \end{displaymath}

  The useful feature is the converse: when usually, to construct an
  extension of $X$, one should define images for arrows $r \to r'$
  between objects of degree $\mu$ in a functorial way, here every
  family automatically induces such arrows! This is a fortunate effect
  of the unique factorization property. Given factorizations
  $\latching r X \to X_r \to \matching r X$, one can define $X(f)$ for
  $f \from r \to r'$ as follow: factor $f = f^+f^-$ with
  $f^- \from r \to s$ lowering the degree and $f^+ \from s \to r'$
  raising it, so that in particular $s \in \deginf{\cat R}\mu$; $f^-$
  then gives rise to a canonical projection $\matching r X \to X_s$
  and $f^+$ to a canonical injection $X_s \to \latching {r'} X$; the
  wanted arrow $X(f)$ is given by the composite
  \begin{displaymath}
    X_r \to \matching r X \to X_s \to \latching {r'} X \to X_{r'}
  \end{displaymath}
  Well-definition and functoriality of the said extension are
  following from uniqueness in the factorization property of the Reedy
  category $\cat R$.
\end{proof}

\medskip

From now on, we fix a model category $\cat M$, that is a complete and
cocomplete category $\cat M$ with a model structure
$(\class C,\class W,\class F)$.
The motivation behind Kan's notion of Reedy categories is
to gives sufficient conditions on $\cat R$ to equip
$\functorcat{\cat R}{\cat M}$ with a model structure where weak
equivalences are pointwise.
\begin{definition}%
  Let $\cat R$ be Reedy. The \define{Reedy triple} on the
  functor category $\functorcat{\cat R}{\cat M}$ is the data of the
  three following classes
  \begin{itemize}
  \item Reedy cofibrations : those $f \from X \to Y$ such that for all
    $r \in\cat R$, the map
    $\latching r Y \sqcup_{\latching r X} X_r \to Y_r$ is a
    cofibration,
  \item Reedy weak equivalences : those $f \from X \to Y$ such that
    for $r \in \cat R$, $f_r \from X_r \to Y_r$ is a weak equivalence,
  \item Reedy fibrations : those $f \from X \to Y$ such that for all
    $r \in\cat R$, the map
    $X_r \to \matching r X \times_{\matching r Y} Y_r$ is a fibration.
  \end{itemize}
\end{definition}
Kan's theorem about Reedy categories, whose our main result gives a
slick proof, then states as follow: the Reedy triple makes
$\functorcat{\cat R}{\cat M}$ into a model category. A first reading
of this definition/theorem is quite astonishing: the distinguished
morphisms are defined through those latching and matching objects, and
it is not clear, apart from being driven by the proof, why we should
emphasize those construction that much. We shall say a word about that
later.

\begin{remark}
  \label{rem:restr-along-inj-isofib}%
  Before going into
  proposition~\ref{prop:reedy-restriction-bifibration} below, we need
  to make a quick remark about extensions of diagrams up to
  isomorphism. Suppose given a injective-on-objects functor
  $i \from \cat A \to \cat B$ between small categories and a category
  $\cat C$, then for every diagram $D \from \cat A \to \cat C$, every
  diagram $D' \from \cat B \to \cat C$ and every isomorphism
  $\alpha \from D\to D'i$, there exists a diagram
  $D''\from \cat B \to \cat C$ isomorphic to $D'$ such that $D''i = D$
  (and the isomorphism $\beta\from D'' \to D'$ can be chosen so that
  $\beta i = \alpha$). Informally it says that every ``up to
  isomorphism'' extension of $D$ can be rectified into a strict
  extension of $D$.

  Put formally, we are claiming that the restriction functor
  $\restr i \from \functorcat{\cat B}{\cat C} \to \functorcat{\cat
    A}{\cat C}$ is an isofibration. Although it can be shown easily by
  hand, we would like to present an alternate proof based on
  homotopical algebra. Taking a universe $\mathbb U$ big enough for
  $\cat C$ to be small relatively to $\mathbb U$, we can consider the
  folk model structure on the category $\Cat$ of $\mathbb U$-small
  categories. With its usual cartesian product, $\Cat$ is a closed
  monoidal model category in which every object is fibrant. It follows
  that $\functorcat{-}{\cat C}$ maps cofibrations to fibrations (see
  \cite[Remark 4.2.3]{hovey:model-cat}). Then, the
  injective-on-objects functor $i \from \cat A\to \cat B$ is a
  cofibration, so it is mapped to a fibration
  $\restr i \from \functorcat{\cat B}{\cat C} \to \functorcat{\cat
    A}{\cat C}$. Recall that fibrations in $\Cat$ are precisely the
  isofibrations and we obtain the result.
\end{remark}

\begin{proposition}
  \label{prop:reedy-restriction-bifibration}%
  Let $\cat R$ be Reedy. The restriction functor
  $\restr{\deginf i \mu} \from \functorcat{\deginf{\cat
      R}{\mu+1}}{\cat M} \to \functorcat{\deginf{\cat R}{\mu}}{\cat
    M}$ is a Grothendieck bifibration.
\end{proposition}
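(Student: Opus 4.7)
The plan is to exhibit cartesian and cocartesian lifts explicitly, using pullbacks and pushouts in $\cat M$ at every object $r$ of degree exactly $\mu$, and to assemble these pointwise constructions into strict extensions of diagrams via Proposition~\ref{prop:key-feature-reedy}. Below degree $\mu$ the lift is forced to restrict to what is given, so the real content is concentrated at degree $\mu$.

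For the cartesian lift above a morphism $u\from X\to Y$ of $\functorcat{\deginf{\cat R}{\mu}}{\cat M}$ with respect to an extension $\hat Y$ of $Y$, I would set
\[
  \hat X_r \;=\; \matching r X \times_{\matching r Y} \hat Y_r
  \qquad (d(r)=\mu),
\]
let the first projection $\hat X_r \to \matching r X$ serve as the right leg of the Reedy factorization at $r$, and induce the left leg $\latching r X \to \hat X_r$ from the pair $(\latching r X \to \matching r X,\ \latching r X \to \latching r Y \to \hat Y_r)$ via the universal property of the pullback; these two ingredients agree in $\matching r Y$ by naturality of $u$. By Proposition~\ref{prop:key-feature-reedy} this data determines a unique extension $\hat X$ of $X$ to $\deginf{\cat R}{\mu+1}$. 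The lift $\hat u \from \hat X \to \hat Y$ agrees with $u$ below degree $\mu$ and is given by the second projection at degree $\mu$. Dually, the cocartesian lift of $u$ with respect to an extension $\hat X$ of $X$ is built from the pushouts $\hat Y_r = \latching r Y \sqcup_{\latching r X} \hat X_r$, the right leg $\hat Y_r \to \matching r Y$ being induced by the universal property of the pushout.

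Verification then splits into two parts. First, naturality of $\hat u$ on arrows of $\deginf{\cat R}{\mu+1}$ follows from the universal properties together with the fact that any morphism between two objects of degree $\mu$ factors through an object of strictly smaller degree by the unique Reedy factorization. Second, the cartesian property is a universal calculation: given $\hat v \from \hat Z \to \hat Y$ above $v = uw$ in the base, the candidate lift $\hat w$ is forced to equal $w$ below degree $\mu$, and at degree $\mu$ is produced by applying the universal property of the pullback to the pair $(\hat Z_r \to \matching r Z \to \matching r X,\ \hat v_r \from \hat Z_r \to \hat Y_r)$; compatibility of this pair in $\matching r Y$ reduces, via the description of $\matching r Z$ as a limit, to naturality of $\hat v$ at the arrows $r \to s$ with $d(s)<\mu$. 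The cocartesian case is strictly dual. The main (mild) obstacle is merely bookkeeping, since naturality, existence, and uniqueness each have to be checked separately on arrows with both endpoints below $\mu$, one at $\mu$ and one below, and both at $\mu$; should any rectification to the strict fibers be needed, Remark~\ref{rem:restr-along-inj-isofib} provides the required isofibration property of the restriction functor.
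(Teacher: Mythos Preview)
Your argument is correct and is, at the level of the underlying construction, the same as the paper's: both build the cartesian lift as the pullback $\matching r X \times_{\matching r Y} \hat Y_r$ at each object of degree~$\mu$, and dually for the cocartesian lift. The packaging, however, is different in a way worth noting. The paper first proves an abstract characterization: a morphism $f$ in the total category is cartesian precisely when the naturality square of the unit of the adjunction $(\restr{\deginf i\mu},\matching\mu)$ at~$f$ is a pullback in $\functorcat{\deginf{\cat R}{\mu+1}}{\cat M}$; it then takes the pullback $\matching\mu A \times_{\matching\mu pY} Y$ in the functor category and invokes Remark~\ref{rem:restr-along-inj-isofib} to rectify the resulting object so that it sits \emph{strictly} over~$A$. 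You instead work pointwise from the start, feeding the family of factorizations $\latching r X \to \hat X_r \to \matching r X$ into Proposition~\ref{prop:key-feature-reedy} to obtain a strict extension of~$X$ without any rectification, and then verify cartesianness by hand. Your route is more elementary and self-contained; the paper's route buys a clean characterization of cartesian morphisms (used immediately afterwards to identify $\pullfact f$ and $\pushfact f$ with the familiar Reedy-theoretic induced maps) at the cost of the extra isofibration step. Either is perfectly adequate for the proposition as stated.
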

\begin{proof}
  The claim is that a morphism $f \from X \to Y$ is cartesian
  precisely when the following diagram is a pullback square:
  \begin{equation}
    \label{eq:cartesian-claim}
    \begin{tikzcd}[column sep=large]
      X \dar \rar["f"] & Y \dar
      \\
      \matching \mu p X \rar["\matching \mu p(f)" swap] & \matching
      \mu p Y
    \end{tikzcd}
  \end{equation}
  where the vertical arrows are the component at $X$ and $Y$ of the
  unit $\eta$ of the adjunction $(p,\matching \mu)$. Indeed, such a
  diagram is a pullback square if and only if the following square is
  a pullback for all $Z$:
  \begin{displaymath}
    \begin{tikzcd}[column sep=large]
      \hom[\functorcat{\deginf{\cat R}{\mu+1}}{\cat M}] Z X
      \dar["\eta_X \circ {-}"] \rar["f\circ{-}"] &
      \hom[\functorcat{\deginf{\cat R}{\mu+1}}{\cat M}] Z Y
      \dar["\eta_Y \circ {-}"]
      \\
      \hom[\functorcat{\deginf{\cat R}{\mu+1}}{\cat M}] Z {\matching
        \mu p X} \rar["\matching \mu p(f) \circ {-}" swap] &
      \hom[\functorcat{\deginf{\cat R}{\mu+1}}{\cat M}] Z {\matching
        \mu p Y}
    \end{tikzcd}
  \end{displaymath}
  We can take advantage of the adjunction $(p,\matching \mu)$ and its
  natural isomorphism
  \begin{displaymath}
    \phi_{Z,A} \from
    \hom[\functorcat{\deginf{\cat R}{\mu+1}}{\cat M}] Z {\matching \mu A}
    \simeq \hom[\functorcat{\deginf{\cat R}{\mu}}{\cat M}] {pZ} A
  \end{displaymath}
  As in any adjunction, this isomorphism is related to the unit by the
  following identity: for any $g \from Z \to X$,
  $p(g) = \phi(\eta_X g)$. So in the end, the square
  in~(\ref{eq:cartesian-claim}) is a pullback if and only if for every
  $Z$ the outer square of the following diagram is a pullback:
 \begin{displaymath}
   \begin{tikzcd}[column sep=large]%
     \hom[\functorcat{\deginf{\cat R}{\mu+1}}{\cat M}] Z X
     \dar["\eta_X \circ {-}"] \rar["f\circ{-}"]
     \ar[dd,"p"{swap},rounded corners,to path={--
       ([xshift=-2em]\tikztostart.west) --
       ([xshift=-2em]\tikztotarget.west) \tikztonodes --
       (\tikztotarget)}] & \hom[\functorcat{\deginf{\cat
         R}{\mu+1}}{\cat M}] Z Y \dar["\eta_Y \circ {-}"]
     \ar[dd,"p",rounded corners,to path={--
       ([xshift=2em]\tikztostart.east) --
       ([xshift=2em]\tikztotarget.east) \tikztonodes --
       (\tikztotarget)}]
     \\
     \hom[\functorcat{\deginf{\cat R}{\mu+1}}{\cat M}] Z {\matching
       \mu p X} \rar["\matching \mu p(f) \circ {-}" swap]
     \dar["\phi"{swap},"\rotatebox{-90}{$\simeq$}"] &
     \hom[\functorcat{\deginf{\cat R}{\mu+1}}{\cat M}] Z {\matching
       \mu p Y} \dar["\phi"{swap},"\rotatebox{-90}{$\simeq$}"]
     \\
     \hom[\functorcat{\deginf{\cat R}{\mu}}{\cat M}] {pZ} {pX}
     \rar["p(f) \circ {-}" swap] 
     & \hom[\functorcat{\deginf{\cat R}{\mu}}{\cat M}] {pZ} {pY}
   \end{tikzcd}
 \end{displaymath}
  This is exactly the definition of a cartesian morphism.
  Dually, we can prove that cocartesian morphisms are those
  $f \from X \to Y$ such that the following is a pushout square:
  \begin{displaymath}
    \begin{tikzcd}
      \latching \mu p X \dar \rar["\latching \mu p(f)"] & \latching \mu p
      \dar Y
      \\
      X \rar["f" swap] & Y
    \end{tikzcd}
  \end{displaymath}

  Now for $u \from A \to pY$ in
  $\functorcat{\deginf{\cat R}{\mu}}{\cat M}$, one should construct a
  cartesian morphism $f \from X \to Y$ above $u$. First notice that we
  constructed $\matching \mu$ in such a way that
  $p \matching \mu = \id{}$ (even more, the counit
  $p \matching \mu \to \id{}$ is the identity natural
  transformation). So $\matching \mu A$ is above $A$ and we could be
  tempted to take, for the wanted $f$, the morphism
  $\kappa \from \matching \mu A \times_{\matching \mu pY} Y \to Y$
  appearing in the following pullback square:
  \begin{equation}
    \label{eq:pullback-not-exactly-above}%
    \begin{tikzcd}
      \bullet \dar \rar["\kappa"] & Y \dar
      \\
      \matching \mu A \rar["\matching \mu u" swap] & \matching
      \mu p Y
    \end{tikzcd}
  \end{equation}
  But $\kappa$ is not necessarily above $u$. Indeed, as a right
  adjoint, $p$ preserves pullbacks. So we get that the
  following is a pullback in
  $\functorcat{\deginf{\cat R}\mu}{\cat M}$:
  \begin{displaymath}
    \begin{tikzcd}
      p(\bullet) \dar \rar["p(\kappa)"] & pY \dar["\id Y"]
      \\
      A \rar["u" swap] & pY
    \end{tikzcd}
  \end{displaymath}
  We certainly know another pullback square of the same diagram,
  namely
  \begin{displaymath}
    \begin{tikzcd}
      A \dar["\id A" swap] \rar["u"] & pY \dar["\id Y"]
      \\
      A \rar["u" swap] & pY
    \end{tikzcd}
  \end{displaymath}
  So, by universal property, we obtain an isomorphism
  $\alpha\from A \to p(\matching \mu A \times_{\matching \mu pY}
  Y)$. Now we summon remark~\ref{rem:restr-along-inj-isofib} to get an
  extension $X$ of $A$ and an isomorphism
  $\beta \from X \to \matching \mu A \times_{\matching \mu pY} Y$
  above $\alpha$. The wanted $f \from X \to Y$ is then just the
  composite $\kappa\beta$, which is cartesian because the outer square
  in the following is a pullback (as we
  chose~\eqref{eq:pullback-not-exactly-above} to be one):
  \begin{displaymath}
    \begin{tikzcd}
      X \drar["\beta"] \ar[dd] \ar[rr,"f"] & & Y \ar[dd]
      \\
      & \bullet \dlar \urar["\kappa"] & 
      \\
      \matching \mu A \ar[rr,"\matching \mu u" swap] & & \matching
      \mu p Y
    \end{tikzcd}
  \end{displaymath}
  The fact that the vertical map
  $X \to \matching \mu A = \matching \mu p X$ is indeed the unit
  $\eta$ of the adjunction at component $X$ comes directly from the
  fact that its image by $p$ is $\id A$. The existence of cocartesian
  morphism above any $u \from pX \to B$ is strictly dual, using this
  time the cocontinuity of $p$ as a left adjoint.
\end{proof}

\begin{remark}
  First, we should notice that proposition
  \ref{prop:key-feature-reedy} make the following multievaluation
  functor an equivalence:
  \begin{displaymath}
    \tag{I} \label{eq:fiber-multieval}%
    \fiber{\functorcat{\deginf{\cat R}{\mu+1}}{\cat M}} A \xrightarrow \sim
    \prod_{r \in \cat R, d(r) = \mu} \coslice{\slice{\cat M}{\matching r A}}{\latching r A}
  \end{displaymath}
  The notation
  $\coslice{\slice{\cat M}{\matching r A}}{\latching r A}$ is slightly
  abusive and means the coslice category of
  $\slice{\cat M}{\matching r A}$ by $\alpha_r$, or equivalently the
  slice category of $\coslice{\cat M}{\latching r A}$ by $\alpha_r$.

  Secondly, we can draw from the previous proof that for a morphism
  $f \from X \to Y$, the fiber morphisms $\pullfact f$ and
  $\pushfact f$ are, modulo identification \eqref{eq:fiber-multieval},
  the respective induced families defining the Reedy triple:
  \begin{displaymath}
    (X_r \to \matching r X \times_{\matching r Y} Y_r)_{r,d(r)=\mu}
    ,\qquad
    (X_r \sqcup_{\latching r X} \latching r Y \to Y_r)_{r,d(r)=\mu}
  \end{displaymath}
  So here it is: the reason behind those {\em a priori} mysterious
  morphisms, involving latching an matching, are nothing else but the
  witness of a hidden bifibrational structure. Putting this into light
  was a tremendous leap in our conceptual understanding of Reedy model
  structures and their generalizations.
\end{remark}

The following proposition is the induction step for successor ordinals
in the usual proof of the existence of Reedy model structures. Our
main theorem \ref{thm:main} allows a very smooth argument.
\begin{proposition}
  \label{prop:reedy-from-main-thm}%
  If the Reedy triple on $\functorcat{\deginf{\cat R}{\mu}}{\cat M}$
  forms a model structure, then it is also the case on
  $\functorcat{\deginf{\cat R}{\mu+1}}{\cat M}$.
\end{proposition}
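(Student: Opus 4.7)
The plan is to apply Theorem~\ref{thm:main} to the Grothendieck bifibration $p = \restr{\deginf i \mu}$ of Proposition~\ref{prop:reedy-restriction-bifibration}, equipping the base $\functorcat{\deginf{\cat R}{\mu}}{\cat M}$ with the Reedy model structure (available by hypothesis) and every fiber $\fiber{\functorcat{\deginf{\cat R}{\mu+1}}{\cat M}}{A}$ with the pointwise model structure transferred from $\prod_{r,\, d(r)=\mu} \coslice{\slice{\cat M}{\matching r A}}{\latching r A}$ via the equivalence~\eqref{eq:fiber-multieval}; in each factor, the three distinguished classes are simply those inherited from $\cat M$. The descriptions of cartesian and cocartesian morphisms established in the proof of Proposition~\ref{prop:reedy-restriction-bifibration} make the adjunction $(\push u, \pull u)$ act coordinatewise: on the factor at $r$ of degree $\mu$, it sends a factorization $\latching r A \to C_r \to \matching r A$ to the pushout factorization $\latching r B \to \latching r B \sqcup_{\latching r A} C_r \to \matching r B$, and dually for $\pull u$ via pullback along $\matching r A \to \matching r B$.

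Hypothesis~\eqref{hyp:quillen-push-pull} will follow from the standard stability of (acyclic) cofibrations under pushout in $\cat M$, once one observes by induction on degree that a Reedy (acyclic) cofibration $u$ induces (acyclic) cofibrations $\latching r A \to \latching r B$ in $\cat M$ at every $r$ (and dually for (acyclic) fibrations and matching maps). For \eqref{hyp:weak-conservative}, when $u$ is an acyclic Reedy cofibration each $\latching r A \to \latching r B$ is itself an acyclic cofibration in $\cat M$, so both canonical maps $C_r \to \latching r B \sqcup_{\latching r A} C_r$ and $D_r \to \latching r B \sqcup_{\latching r A} D_r$ are acyclic cofibrations as pushouts of acyclic cofibrations; two-out-of-three in $\cat M$ then forces $C \to D$ to be a componentwise weak equivalence if and only if $\push u(C \to D)$ is, yielding the homotopical conservativity of $\push u$, and dually for $\pull v$.

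The harder verification is \eqref{hyp:hBC}: for a square of the prescribed form I will unfold the mate $\mu_Z$ at coordinate $r$ as the canonical comparison
\[ \latching r {C'} \sqcup_{\latching r A} \left( Z_r \times_{\matching r C} \matching r A \right) \longrightarrow \left( \latching r B \sqcup_{\latching r C} Z_r \right) \times_{\matching r B} \matching r {C'} \]
between iterated pushout--pullback and pullback--pushout, built from four maps of $\cat M$ that are acyclic cofibrations and acyclic fibrations by the observation above. An iterated gluing and cogluing argument inside $\cat M$, applied to the prism joining the two compositions through the four (co)cartesian squares, will then show that this comparison is a weak equivalence at each $r$.

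With \eqref{hyp:quillen-push-pull}, \eqref{hyp:weak-conservative} and \eqref{hyp:hBC} in hand, Theorem~\ref{thm:main} delivers a model structure on $\functorcat{\deginf{\cat R}{\mu+1}}{\cat M}$ whose cofibrations and fibrations are $\totalcof{\cat E}$ and $\totalfib{\cat E}$. The explicit formulae for $\pushfact f$ and $\pullfact f$ recalled in the remark preceding this proposition identify these with the Reedy cofibrations and Reedy fibrations; the uniquely determined class of weak equivalences is then forced to be the pointwise one, as one verifies by writing an arbitrary total weak equivalence as $\cart q Y \circ \middlefact f j q \circ \cocart j X$ and applying two-out-of-three pointwise. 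The principal obstacle is the bookkeeping required by \eqref{hyp:hBC}, where one must propagate the acyclicity of four Reedy-acyclic maps through a nested pushout--pullback mate.
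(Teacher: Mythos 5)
Your high-level plan coincides with the paper's: apply Theorem~\ref{thm:main} to the restriction bifibration of Proposition~\ref{prop:reedy-restriction-bifibration}, with the Reedy structure on the base, the product of slice-coslice structures on the fibers via~\eqref{eq:fiber-multieval}, and the key input that $\latching r$ preserves acyclic cofibrations and $\matching r$ preserves acyclic fibrations (paper: \cite[Lemma 15.3.9]{hirschhorn:loc}; you: ``induction on degree''). Your verification of~\eqref{hyp:weak-conservative} matches the paper's. One small overstatement in~\eqref{hyp:quillen-push-pull}: you don't actually need $\latching r A \to \latching r B$ to be a cofibration for that step, since $(\push u f)_r$ is a pushout of $f_r$ along the corner map and pushouts of (acyclic) cofibrations are (acyclic) cofibrations regardless of what $u$ is; the claim about latching maps of Reedy acyclic cofibrations is only needed for~\eqref{hyp:weak-conservative} and~\eqref{hyp:hBC}.

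The genuine weak spot is your treatment of~\eqref{hyp:hBC}. Your explicit formula for $(\mu_Z)_r$ is correct, but the proposed ``iterated gluing and cogluing argument inside $\cat M$'' does not obviously close the gap. The gluing and cogluing lemmas compare pushouts (resp.\ pullbacks) of two weakly equivalent diagrams, and they require cofibrancy (resp.\ fibrancy) or properness hypotheses that are not available here, since there is no constraint that $\latching r A \to Z_r$ be a cofibration or $Z_r \to \matching r C$ a fibration. Moreover, the mate is a single map from a pushout to a pullback, not a comparison of two pushouts or two pullbacks, so the lemmas don't apply in the shape you'd want. The argument that actually works, and the one the paper uses, is elementary: $(\mu_Z)_r$ is the diagonal of a commutative pentagon whose four boundary edges are, respectively, a pushout of $\latching r u'$, a pullback of $\matching r v$, a pushout of $\latching r u$, and a pullback of $\matching r v'$ --- all acyclic by the latching/matching lemma and stability of acyclic (co)fibrations under pushout/pullback --- and two applications of two-out-of-three along the two composites through the pentagon show $(\mu_Z)_r$ is a weak equivalence. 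You should replace the gluing/cogluing sketch with this direct argument.
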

\begin{proof}
  Our course, the goal is to use theorem \ref{thm:main} on the
  Grothendieck bifibration
  $\restr{\deginf i \mu} \from \functorcat{\deginf{\cat
      R}{\mu+1}}{\cat M} \to \functorcat{\deginf{\cat R}{\mu}}{\cat
    M}$. By hypothesis, the base
  $\functorcat{\deginf{\cat R}{\mu}}{\cat M}$ has a model structure
  given by the Reedy triple. Each fiber
  $\fiber{(\restr{\deginf i \mu})} A$ above a diagram $A$ is endowed,
  via identification \eqref{eq:fiber-multieval}, with the product
  model structure: indeed, if $\cat N$ is a model category, so is its
  slices $\slice{\cat N}N$ and coslices $\coslice{\cat N}N$
  categories, just defining a morphism to be a cofibration, a
  fibration or a weak equivalence if it is in $\cat N$; products of
  model categories are model categories by taking the pointwise
  defined structure. All in all, it means that the following makes the
  fiber $\fiber{(\restr{\deginf i \mu})} A$ into a model category: a
  fiber map $f \from X \to X'$ in $\fiber{(\restr{\deginf i \mu})} A$
  is a cofibration, a fibration or a weak equivalence if and only if
  $f_r \from X_r \to X'_r$ is one for every $r \in \cat R$ of degree
  $\mu$.

  Now the proof amounts to show that hypothesis
  \eqref{hyp:quillen-push-pull}, \eqref{hyp:weak-conservative} and
  \eqref{hyp:hBC} are satisfied in this framework. Let us first tackle
  \eqref{hyp:quillen-push-pull}. Suppose $u \from A \to B$ in
  $\functorcat{\deginf{\cat R}{\mu}}{\cat M}$ and $f \from Y \to Y'$ a
  fiber morphism at $B$. Then by definition of the cartesian morphisms
  in $\functorcat{\deginf{\cat R}{\mu+1}}{\cat M}$, $\pull u f$ is the
  unique map above $A$ making the following diagram commute for all
  $r$ of degree $\mu$:
  \begin{equation}
    \label{eq:pullback-reedy}%
    \begin{tikzcd}
      (\pull u Y)_r \rar \dar["(\pull u f)_r" swap] & Y_r \dar["f_r"]
      \\
      (\pull u Y')_r \rar \dar & Y'_r \dar
      \\
      \matching r A \rar["\matching r u" swap] & \matching r B
    \end{tikzcd}
  \end{equation}
  where the lower square and outer square are pullback diagrams. By
  the pasting lemma, so is the upper square. Hence $(\pull u f)_r$ is
  a pullback of $f_r$, and as such is a (acyclic) fibration whenever
  $f_r$ is one. This proves that $\pull u$ is right Quillen for any
  $u$, that is \eqref{hyp:quillen-push-pull}.

  Goals \eqref{hyp:weak-conservative} and \eqref{hyp:hBC} will be
  handle pretty much the same way one another and it lies on the
  following well know fact about Reedy model structures \cite[lemma
  15.3.9]{hirschhorn:loc}: for $r \in \cat R$ of degree $\mu$, the
  functor
  $\matching r \from \functorcat{\deginf {\cat R} \mu}{\cat M} \to
  \cat M$ preserves acyclic fibrations\footnote{Actually it is right
    Quillen, but we will not need that much here.}. This has a
  wonderful consequence: if $u$ is an acyclic fibration of
  $\functorcat{\deginf {\cat R} \mu}{\cat M}$, any pullback of
  $\matching r u$ is an acyclic fibration hence a weak equivalence. So
  the upper square of diagram \eqref{eq:pullback-reedy} has acyclic
  horizontal arrows. By the 2-out-of-3 property, $f_r$ on the right is
  a weak equivalence if an only if $(\pull u f)_r$ is one. This being
  true for each $r \in \cat R$ of degree $\mu$ makes $\pull u$
  homotopically conservative whenever $u$ is an acyclic
  fibration. This validates half of the property
  \eqref{hyp:weak-conservative}. The other half is proven dually,
  resting on the dual lemma: for any $r \in \cat R$ of degree $\mu$,
  the latching functor
  $\latching r \from \functorcat{\deginf {\cat R} \mu}{\cat M} \to
  \cat M$ preserves acyclic cofibrations; then deducing that pushouts
  of $\latching r u$ are weak equivalences whenever $u$ is an acyclic
  cofibration.

  It remains to show \eqref{hyp:hBC}. Everything is already in place
  and it is just a matter of expressing it. For a commutative square
  of $\functorcat{\deginf {\cat R} \mu}{\cat M}$
  \begin{displaymath}
    \begin{tikzcd}
      A \rar["v"] \dar["u'" swap] & C \dar["u"]
      \\
      C' \rar["v'" swap] & B
    \end{tikzcd}
  \end{displaymath}
  with $u,u'$ Reedy acyclic cofibrations and $v,v'$ Reedy acyclic
  fibrations, the mate at an extension $Z$ of $C$ is the unique fiber
  morphism
  $\nu_Z \from (\push {u'} \pull v Z) \to (\pull {v'} \push u Z)$
  making the following commute for every $r \in \cat R$ of degree
  $\mu$:
  \begin{displaymath}
    \begin{tikzcd}
      & \color{lightgray} \matching r A \ar[rr,"\matching r
      v",lightgray] & & \color{lightgray} \matching r C &
      \\
      \color{lightgray} \latching r A \dar["\latching r u'"
      swap,lightgray] \rar[lightgray] & (\pull v Z)_r \ar[rr] \dar
      \uar[lightgray] & & Z_r \ar[dd] \uar[lightgray] &
      \color{lightgray} \latching r C \lar[lightgray]
      \ar[dd,"\latching r v",lightgray]
      \\
      \color{lightgray} \latching r C' \rar[lightgray] & (\push {u'}
      \pull v Z)_r \drar["(\nu_Z)_r"] & & &
      \\
      & & (\pull {v'} \push u Z)_r \rar \dar[lightgray] & (\push u
      Z)_r \dar[lightgray] & \color{lightgray} \latching r B
      \lar[lightgray]
      \\
      & & \color{lightgray} \matching r C' \rar["\matching r v'"
      swap,lightgray] & \color{lightgray} \matching r B &
    \end{tikzcd}
  \end{displaymath}
  where grayscaled square are either pullbacks (when involving
  matching objects) or pushouts (when involving latching objects). So
  by the same argument as above, the horizontal and vertical arrows of
  the pentagone are weak equivalences, making the $r$-component of the
  mate $(\nu_Z)_r$ a weak equivalence also by the 2-out-of-3 property.

  Theorem \ref{thm:main} now applies, and yield a model structure on
  $\functorcat{\deginf{\cat R}{\mu+1}}{\cat M}$ which is readily the
  Reedy triple.
\end{proof}


\subsection{Notions of generalized Reedy categories}
\label{subsec:gen-reedy}

From time to time, people stumble accross almost Reedy categories and
build {\em ad hoc} workarounds to end up with a structure ``à la
Reedy''. The most popular such generalizations are probably Cisinski's
\cite{cisinski:thesis} and Berger-Moerdijk's
\cite{berger-moerdijk:gen-reedy}, allowing for non trivial
automorphisms. In \cite{shulman:gen-reedy}, Shulman establishes a
common framework for every such known generalization of Reedy
categories (including enriched ones, which go behind the scope of this
paper). Roughly put, Shulman defines {\em almost-Reedy categories} to
be those small categories $\cat C$ with a degree function on the
objects that satisfy the following property: taking $x$ of degree
$\mu$ and denoting $\deginf{\cat C}\mu$ the full subcategory of
$\cat C$ of objects of degree strictly less than $\mu$, and
${\cat C}_x$ the full subcategory of $\cat C$ spanned by
$\deginf{\cat C}\mu$ and $x$, then the diagram category
$\functorcat{\cat C_x}{\cat M}$ is obtained as the {\em bigluing} (to
be defined below) of two nicely behaved functors
$\functorcat{\deginf{\cat C}\mu}{\cat M} \to \cat M$, namely the
weighted colimit and weigthed limit functors, respectively weighted by
$\hom[\cat C] - x$ and $\hom[\cat C] x -$. In particular, usual Reedy
categories are recovered when realizing that the given formulas of
latching and matching objects are precisely these weighted colimits
and limits.

\medskip

In order to understand completely the generalization proposed in
\cite{shulman:gen-reedy}, we propose an alternative view on the Reedy
construction that we exposed in detail in the previous section. For
starter, here is a nice consequence of theorem~\ref{thm:main}:
\begin{lemma}
  \label{lem:quillen-bifib-by-pullback}%
  Suppose there is a strict pullback square of categories
  \begin{displaymath}
    \begin{tikzcd}
      \cat F \rar \dar["q"] & \cat E \dar["p"]
      \\
      \cat C \rar["F"] & \cat B
    \end{tikzcd}
  \end{displaymath}
  in which $\cat C$ has a model structure and $p$ is a Quillen
  bifibration. If
  \begin{enumerate}[label=(\roman*)]
  \item \label{item:weak-conservative-through-F}%
    $\push{F(u)}$ and $\pull{F(v)}$ are homotopically conservative
    whenever $u$ is an acyclic cofibration and $v$ an acyclic
    fibration in $\cat C$,
  \item \label{item:hBC-through-F}%
    $F$ maps squares of the form
    \begin{displaymath}
      \begin{tikzcd}
        A \rar["v"] \dar["u'"] & C \dar["u"]
        \\
        C' \rar["v'"] & B
      \end{tikzcd}
    \end{displaymath}
    with $u,u'$ acyclic cofibrations and $v,v'$ acyclic fibrations in
    $\cat C$ to squares in $\cat B$ that satisfy the homotopical
    Beck-Chevalley condition,
  \end{enumerate}
  then $q$ is also a Quillen bifibration.
\end{lemma}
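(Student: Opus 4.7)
The strategy is to verify that the functor $q\from \cat F \to \cat C$ satisfies the hypotheses of theorem~\ref{thm:main}, so that it acquires a model structure on $\cat F$ making it a Quillen bifibration. The proof is essentially a translation exercise through the pullback.

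First I would observe that $q$ inherits the structure of a Grothendieck bifibration from $p$ in a canonical way. Concretely, an object of $\cat F$ may be described as a pair $(X,A)$ with $X\in\cat E$, $A\in\cat C$ and $pX = FA$, and a morphism as a pair $(f,u)$ with $pf = Fu$. For $u \from A \to A'$ in $\cat C$ and $(Y,A')\in\cat F$, the pair $(\cart {Fu}Y, u)$ is cartesian for $q$; dually for cocartesian morphisms. This cleavage produces a canonical isomorphism of fiber categories $\fiber{\cat F}{A} \simeq \fiber{\cat E}{FA}$ for every $A\in\cat C$. In particular, each fiber of $q$ inherits a model structure from $p$ being a Quillen bifibration, and the push-pull adjunction of $q$ along $u\from A\to A'$ coincides with the push-pull adjunction of $p$ along $Fu$ (modulo the identification of fibers). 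Thus hypothesis~\eqref{hyp:quillen-push-pull} for $q$ is a direct consequence of the same hypothesis for~$p$.

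Next I would verify hypothesis~\eqref{hyp:weak-conservative}: for any acyclic cofibration $u$ and acyclic fibration $v$ in $\cat C$, the functors $\push u^q$ and $\pull v^q$ coincide (up to the fiber identification) with $\push{Fu}^p$ and $\pull{Fv}^p$, which are homotopically conservative by assumption~\ref{item:weak-conservative-through-F}. Then I would tackle hypothesis~\eqref{hyp:hBC}: for a commutative square in $\cat C$ with $u,u'$ acyclic cofibrations and $v,v'$ acyclic fibrations, the mate $\mu^q$ at an object $Z\in\fiber{\cat F}{C}$ is constructed entirely from the $q$-cartesian and $q$-cocartesian choices, which project under the fiber identification to the $p$-cartesian and $p$-cocartesian choices above the $F$-image square. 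Consequently $\mu^q_Z$ corresponds to the component of the mate $\mu^p$ associated to the $F$-image square, evaluated at the corresponding object of $\fiber{\cat E}{FC}$. By hypothesis~\ref{item:hBC-through-F}, the $F$-image square satisfies the homotopical Beck-Chevalley condition for $p$, so $\mu^p$ is a pointwise weak equivalence in the fibers of $p$. Through the fiber isomorphism, this gives that $\mu^q$ is a pointwise weak equivalence in the fibers of $q$.

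Having checked \eqref{hyp:quillen-push-pull}, \eqref{hyp:weak-conservative} and \eqref{hyp:hBC} for $q$, theorem~\ref{thm:main} applies and yields a model structure on $\cat F$ for which $q$ is a Quillen bifibration. The only conceptual step is the identification of the mate of $q$ with the mate of $p$ along the $F$-image square; this will be the main verification, but it follows formally from the fact that the cleavages of $q$ are lifted pointwise from those of $p$ along~$F$.
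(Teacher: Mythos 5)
Your proof is correct and takes essentially the same approach as the paper: identify the bifibration $q$ as the Grothendieck construction of the pseudofunctor $p'F\from\cat C\to\Adj$ (which you describe concretely via pairs), observe that the fibers and push-pull adjunctions of $q$ are those of $p$ transported along $F$, check that hypotheses~\eqref{hyp:quillen-push-pull}, \eqref{hyp:weak-conservative} and~\eqref{hyp:hBC} are then transferred from $p$ to $q$ by the two conditions of the lemma, and apply theorem~\ref{thm:main}. Your remarks on the identification of cartesian/cocartesian lifts and of the mates are exactly the right verifications, even if the paper leaves them implicit.
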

\begin{proof}
  Denote $p' \from \cat B \to \Adj$ the pseudo functor
  $A \mapsto \fiber{\cat E}A$ associated to $p$. Then it is widely
  known that the pullback $q$ of $p$ along $F$ is the bifibration
  obtained by Grothendieck construction of the pseudo functor
  $p'F\from \cat C \to \Adj$. It has fiber
  $\fiber{\cat F}C = \fiber{\cat E}{FC}$ at $C \in\cat C$, which has
  a model structure; and for any $u \from C \to D$ in $\cat C$, the
  adjunction
  \begin{displaymath}
    \push u \from \fiber{\cat F}C \rightleftarrows
    \fiber{\cat F}D \cofrom \pull u
  \end{displaymath}
  is given by the pair $(\push {F(u)}, \pull {F(u)})$ defined by
  $p$. Hence theorem~\ref{thm:main} asserts that $q$ is a Quillen
  bifibration as soon as \eqref{hyp:hBC} and
  \eqref{hyp:weak-conservative} are satisfied. The conditions of the
  lemma are precisely there to ensure that this is the case.
\end{proof}

Now recall that $\standsimp 1$ and $\standsimp 2$ are the posetal
categories associated to $\{0<1\}$ and $\{0<1<2\}$ respectively, and
write $c \from \standsimp 1 \to \standsimp 2$ for the functor
associated with the mapping $0 \mapsto 0, 1\mapsto 2$. Given a Reedy
category $\cat R$ and an object $r$ of degree $\mu$, denote
$\deginf i r \from \deginf{\cat R}\mu \to \deginf{\cat R}r$ the
inclusion of the full subcategory of $\cat R$ spanned by the object of
degree strictly less than $\mu$ into the one spanned by the same
objects plus $r$. Then proposition~\ref{prop:key-feature-reedy}
asserts that the following is a strict pullback square of categories:
\begin{displaymath}
  \begin{tikzcd}
    \functorcat{\deginf{\cat R}r}{\cat M} \rar \dar["\restr{\deginf i
      r}"{swap}] & \functorcat{\standsimp 2}{\cat M} \dar["\restr c"]
    \\
    \functorcat{\deginf{\cat R}\mu}{\cat M} \rar["\alpha_r"{swap}] &
    \functorcat{\standsimp 1}{\cat M}
  \end{tikzcd}
\end{displaymath}
where the bottom functor maps every diagram
$X \from \deginf {\cat R} \mu \to \cat M$ to the canonical arrow
$\alpha_r \from \latching r X \to \matching r X$. Moreover the functor
$\restr c$ is a Grothendieck bifibration: one can easily verify that
an arrow in $\functorcat{\standsimp 2}{\cat M}$
\begin{displaymath}
  \begin{tikzcd}
    \bullet \rar["f"] \dar & \bullet \dar
    \\
    \bullet \rar["g"] \dar & \bullet \dar
    \\
    \bullet \rar["h"] & \bullet
  \end{tikzcd}
\end{displaymath}
is cartesian if and only if the bottom square is a pullback, and is
cocartesian if and only if the top square is a pushout. In particular,
for each object $k\from A\to B$ of $\functorcat{\standsimp 1}{\cat M}$
we have a model structure on its fiber
$\fiber{(\restr c)}k \simeq \coslice{\slice{\cat M}B}A$. Stability of
cofibrations by pushout and of fibrations by pullback in the model
category $\cat M$ translates to say that
hypothesis~\ref{hyp:quillen-push-pull} is satisfied by $\restr c$. In
other word, by equipping the basis category
$\functorcat{\standsimp 1}{\cat M}$ with the trivial model structure,
theorem~\ref{thm:main} applies (\eqref{hyp:hBC} and
\eqref{hyp:weak-conservative} are vacuously met) and makes $\restr c$
a Quillen bifibration. The content of the proof of
proposition~\ref{prop:reedy-from-main-thm} is precisely showing
conditions~\ref{item:weak-conservative-through-F}
and~\ref{item:hBC-through-F} of
lemma~\ref{lem:quillen-bifib-by-pullback}. We can then conclude that
$\restr i r \from \functorcat{\deginf {\cat R} r}{\cat M} \to
\functorcat{\deginf {\cat R} r}{\cat M}$ is a Quillen bifibration as
in proposition~\ref{prop:reedy-from-main-thm}.

The result of \cite[Theorem 3.11]{shulman:gen-reedy} fall within this
view. Shulman defines the \define{bigluing} of a natural
transformation $\alpha \from F \to G$ between two functors
$F,G \from \cat M \to \cat N$ as the category $\bigluing \alpha$
whose:
\begin{itemize}
\item objects are factorizations
  \begin{displaymath}
    \alpha_M \from FM \overset f \to N \overset g \to GM
  \end{displaymath}
\item morphisms $(f,g) \overset {(h,k)} \to (f',g')$ are commutative
  diagrams of the form
  \begin{displaymath}
    \begin{tikzcd}
      FM \rar["f"] \dar["F(h)" swap] & N \rar["g"] \dar["k"] & GM \dar["G(h)"]
      \\
      FM' \rar["{f'}" swap] & N' \rar["{g'}" swap] & GM'
    \end{tikzcd}
  \end{displaymath}
\end{itemize}
Otherwise put, the category $\bigluing \alpha$ is a pullback as in:
\begin{displaymath}
  \begin{tikzcd}
    \bigluing \alpha \rar \dar & \functorcat{\standsimp 2}{\cat N}
    \dar["\restr c"]
    \\
    \cat M \rar["\alpha"{swap}] & \functorcat{\standsimp 1}{\cat N}
  \end{tikzcd}
\end{displaymath}
In the same fashion as in the proof of proposition
\ref{prop:reedy-from-main-thm}, we can show that
conditions~\ref{item:weak-conservative-through-F}
and~\ref{item:hBC-through-F} are satisfied for the bottom functor
(that we named abusively $\alpha$) when $F$ maps acyclic cofibrations
to {\em couniversal weak equivalences} and $G$ maps acyclic fibrations
to {\em universal weak equivalences}. By a couniversal weak
equivalence is meant a map every pushout of which is a weak
equivalence; and by a universal weak equivalence is meant a map every
pullback of which is a weak equivalence. Now
lemma~\ref{lem:quillen-bifib-by-pullback} directly proves Shulman's
theorem.
\begin{theorem}[Shulman]
  \label{thm:shulman}%
  Suppose $\cat N$ and $\cat M$ are both model categories. Let
  $\alpha \from F\to G$ between $F,G \from \cat M \to \cat N$
  satisfying that:
  \begin{itemize}
  \item $F$ is cocontinuous and maps acyclic cofibrations to
    couniversal weak equivalences,
  \item $G$ is continuous and maps acyclic fibrations to universal
    weak equivalence.
  \end{itemize}
  Then $\bigluing \alpha$ is a model category whose:
  \begin{itemize}
  \item cofibrations are the maps $(h,k)$ such that both $h$ and the map
    $FM' \sqcup_{FM} N \to N'$ induced by $k$ are cofibrations in
    $\cat M$ and $\cat N$ respectively,
  \item fibrations are the maps $(h,k)$ such that both $h$ and the map
    $N \to GM \times_{GM'} N'$ induced by $k$ are fibrations in
    $\cat M$ and $\cat N$ respectively,
  \item weak equivalences are the maps $(h,k)$ where both $h$ and $k$
    are weak equivalences in $\cat M$ and $\cat N$ respectively.
  \end{itemize}
\end{theorem}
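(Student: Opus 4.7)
The plan is to apply lemma~\ref{lem:quillen-bifib-by-pullback} to the strict pullback diagram preceding the theorem, in which the right-hand functor $\restr c \from \functorcat{\standsimp 2}{\cat N} \to \functorcat{\standsimp 1}{\cat N}$ has already been established as a Quillen bifibration (with the trivial model structure on $\functorcat{\standsimp 1}{\cat N}$), and in which the base $\cat M$ carries its given model structure. Since each fiber of $\restr c$ over an object $\alpha_M \from FM \to GM$ is the model category $\coslice{\slice{\cat N}{GM}}{FM}$, verifying conditions~\ref{item:weak-conservative-through-F} and~\ref{item:hBC-through-F} of the lemma will produce a Quillen bifibration structure on $q \from \bigluing \alpha \to \cat M$, after which the description of the model structure on $\bigluing \alpha$ will follow by unravelling the construction of total (co)fibrations.

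For condition~\ref{item:weak-conservative-through-F}, observe that the pushforward $\push{\alpha(u)}$ sends $(FA \to N \to GA)$ to $(FB \to FB \sqcup_{FA} N \to GB)$, and hence acts on a fiber morphism with middle component $k \from N \to N'$ by forming the induced map $FB \sqcup_{FA} N \to FB \sqcup_{FA} N'$. When $u$ is an acyclic cofibration in $\cat M$, the hypothesis on $F$ makes $Fu$ a couniversal weak equivalence, so both $N \to FB \sqcup_{FA} N$ and $N' \to FB \sqcup_{FA} N'$ are weak equivalences in $\cat N$, and a 2-out-of-3 argument shows $\push{\alpha(u)}$ is homotopically conservative. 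The dual argument treats $\pull{\alpha(v)}$ for $v$ an acyclic fibration, using the hypothesis on $G$.

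For condition~\ref{item:hBC-through-F}, given a square in $\cat M$ with $u, u'$ acyclic cofibrations and $v, v'$ acyclic fibrations, I compute the middle component of the mate at $Z = (FC \to K \to GC)$ as the canonical comparison
\begin{displaymath}
  FC' \sqcup_{FA} (GA \times_{GC} K) \longrightarrow GC' \times_{GB} (FB \sqcup_{FC} K).
\end{displaymath}
By the defining property of the mate, precomposing this comparison with the pushout injection $GA \times_{GC} K \to FC' \sqcup_{FA} (GA \times_{GC} K)$ and postcomposing with the pullback projection $GC' \times_{GB} (FB \sqcup_{FC} K) \to FB \sqcup_{FC} K$ recovers the two-step zigzag $GA \times_{GC} K \to K \to FB \sqcup_{FC} K$. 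The flanking maps are weak equivalences as a pushout of $Fu'$ and a pullback of $Gv'$; the maps composing the zigzag are a pullback of $Gv$ and a pushout of $Fu$, hence also weak equivalences. Two applications of 2-out-of-3 then exhibit the mate's middle component as a weak equivalence in $\cat N$.

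Finally, I identify the resulting model structure on $\bigluing \alpha$. Total cofibrations are the pairs $(h, k)$ with $h$ a cofibration in $\cat M$ and $\pushfact{(h,k)}$ a cofibration in the fiber, which unravels to the condition that $FM' \sqcup_{FM} N \to N'$ be a cofibration in $\cat N$; total fibrations are characterized dually. For a total weak equivalence, pick any factorization $h = qj$ through an object $L$: the middle component of $\middlefact{(h,k)}{j}{q}$ is the map $FL \sqcup_{FM} N \to GL \times_{GM'} N'$, and the composite $N \to FL \sqcup_{FM} N \to GL \times_{GM'} N' \to N'$ equals $k$; since $N \to FL \sqcup_{FM} N$ is a pushout of $Fj$ (couniversal w.e.) and $GL \times_{GM'} N' \to N'$ is a pullback of $Gq$ (universal w.e.), both flanking maps are weak equivalences, and a last 2-out-of-3 argument gives that $(h, k)$ is a total weak equivalence if and only if both $h$ and $k$ are weak equivalences in their respective categories. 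The main obstacle is condition~\ref{item:hBC-through-F}; once the mate is decomposed as above, the argument reproduces in essence the one used in proposition~\ref{prop:reedy-from-main-thm}.
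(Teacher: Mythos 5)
Your proof follows the same strategy as the paper: realize $\bigluing\alpha$ as the pullback of the Quillen bifibration $\restr c$ along the functor $\alpha \from \cat M \to \functorcat{\standsimp 1}{\cat N}$ and invoke lemma~\ref{lem:quillen-bifib-by-pullback}, verifying conditions~\ref{item:weak-conservative-through-F} and~\ref{item:hBC-through-F} from the hypotheses that $F$ sends acyclic cofibrations to couniversal weak equivalences and $G$ sends acyclic fibrations to universal weak equivalences. You merely flesh out the verifications that the paper leaves implicit by deferring to proposition~\ref{prop:reedy-from-main-thm}, and the unwinding of total cofibrations, fibrations, and weak equivalences is carried out correctly.
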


Maybe the best way to understand this theorem is to see it at
play. Recall that a generalized Reedy category in the sense of
Berger and Moerdijk is a kind of Reedy category with degree preserving
isomorphism: precisely it is a category $\cat R$ with a degree
function $d: \ob{\cat R} \to \lambda$ and wide subcategories
$\cat R^+$ and $\cat R^-$ such that:
\begin{itemize}
\item \showcase{non-invertible} morphisms of $\cat R^+$ strictly raise
  the degree while those of $\cat R^-$ striclty lower it,
\item isomorphisms all preserve the degree,
\item $\cat R^+ \cap \cat R^-$ contains exactly the isomorphisms as
  morphisms,
\item every morphism $f$ can be factorized as $f=f^+f^-$ with
  $f^+ \in \cat R^+$ and $f^- \in\cat R^-$, and such a factorization
  is unique up to isomorphism,
\item if $\theta$ is an isomorphism and $\theta f = f$ for some
  $f \in \cat R^-$, then $\theta$ is an identity.
\end{itemize}
The central result in \cite{berger-moerdijk:gen-reedy} goes as follow:
\begin{enumerate}[label=(\arabic*)]
\item the latching and matching objects at $r\in\cat R$ of some
  $X \from \cat R \to \cat M$ are defined as in the classical case, but
  now the automorphism group $\aut r$ acts on them, so that
  $\latching r X$ and $\matching r X$ are objects of
  $\functorcat{\aut r}{\cat M}$ rather than mere objects of $\cat M$.
\item suppose $\cat M$ such that every $\functorcat{\aut r}{\cat M}$
  bears the projective model structure, and define Reedy cofibrations,
  Reedy fibrations and Reedy weak equivalences as usual but
  considering the usual induced maps
  $X_r \sqcup_{\latching r X} \latching r Y \to Y_r$ and
  $X_r \to Y_r \times_{\matching r Y} \matching r X$ in
  $\functorcat{\aut r}{\cat M}$, not in $\cat M$.
\item then Reedy cofibrations, Reedy fibrations and Reedy weak
  equivalences give $\functorcat{\cat R}{\cat M}$ a model structure.
\end{enumerate}
In that framework, theorem~\ref{thm:shulman} is applied repeatedly with
$\alpha$ being the canonical natural transformation between
$\latching r, \matching r \from \functorcat{\deginf{\cat R}\mu}{\cat
  M} \to \functorcat{\aut r}{\cat M}$ whenever $r$ is of degree
$\mu$. In particular, here we see the importance to be able to vary
the codomain category $\cat N$ of Shulman's result in each successor
step, and not to work with an homogeneous $\cat N$ all along.



\subsection{ Related works on Quillen bifibrations}
\label{subsec:versus-hp-rs}

Our work builds on the papers \cite{roig:model-bifibred},
\cite{stanculescu:bifib-model} on the one hand, and
\cite{harpaz-prasma:grothendieck-model} on the other hand, whose
results can be seen as special instances of our main theorem
\ref{thm:main}.  In these two lines of work, a number of sufficient
conditions are given in order to construct a Quillen bifibration.  The
fact that their conditions and constructions are special cases of ours
follows from the equivalence established in theorem \ref{thm:main}.
As a matter of fact, it is quite instructive to review and to point
out the divergences between the two approaches and ours, since it also
provides a way to appreciate the subtle aspects of our construction.

\medskip

Let us state the two results and comment them.
\begin{theorem}[Roig, Stanculescu]
  \label{thm:rs}%
  Let $p \from \cat E \to \cat B$ be a Grothendieck
  bifibration. Suppose that $\cat B$ is a model category with
  structure $(\totalcof\null,\totalweak\null,\totalfib\null)$ and that
  each fiber $\fiber {\cat E} A$ also with structure
  $(\totalcof A,\totalweak A,\totalfib A)$. Suppose also assumption
  \eqref{hyp:quillen-push-pull}. Then $\cat E$ is a model category with
  \begin{itemize}
  \item cofibrations the total ones,
  \item weak equivalences those $f \from X \to Y$ such that
    $p(f) \in \totalweak\null$ and $\pullfact f \in \totalweak{pX}$,
  \item fibrations the total ones,
  \end{itemize}
  provided that
  \begin{enumerate}[label=(\roman*)]
  \item\label{hyp:rs-hcons} $\pull u$ is homotopically conservative
    for all $u \in \totalweak\null$,
  \item\label{hyp:rs-unit} for $u \from A \to B$ an acyclic
    cofibration in $\cat B$, the unit of the adjoint pair
    $(\push u,\pull u)$ is pointwise a weak equivalence in
    $\fiber{\cat E}A$.
  \end{enumerate}
\end{theorem}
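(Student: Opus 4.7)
The strategy is to deduce this theorem from theorem~\ref{thm:main}. Hypothesis~\eqref{hyp:quillen-push-pull} is common to both, so it suffices to check that~\ref{hyp:rs-hcons} and~\ref{hyp:rs-unit} entail~\eqref{hyp:weak-conservative} and~\eqref{hyp:hBC}, then apply theorem~\ref{thm:main}, and finally identify the resulting class $\totalweak{\cat E}$ of total weak equivalences with the class described in the statement.

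First I would verify~\eqref{hyp:weak-conservative}. The case of $\pull v$ for $v$ an acyclic fibration is immediate from~\ref{hyp:rs-hcons}. For $\push u$ with $u$ an acyclic cofibration and $f\from X\to Y$ a fiber morphism in $\fiber{\cat E}A$, I would read off the naturality square of the unit $\eta$ of $(\push u,\pull u)$:
\begin{displaymath}
\begin{tikzcd}
X \ar[d,"f"swap] \ar[r,"\eta_X"] & \pull u \push u X \ar[d,"\pull u \push u (f)"] \\
Y \ar[r,"\eta_Y"swap] & \pull u \push u Y
\end{tikzcd}
\end{displaymath}
Hypothesis~\ref{hyp:rs-unit} makes $\eta_X$ and $\eta_Y$ into weak equivalences, and combining the 2-out-of-3 property with the homotopical conservativity of $\pull u$ (supplied by~\ref{hyp:rs-hcons} since $u$ is a weak equivalence) shows that $f$ is a weak equivalence if and only if $\push u(f)$ is.

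Next I would verify~\eqref{hyp:hBC}, which I expect to be the main obstacle. Given a square
\begin{displaymath}
\begin{tikzcd}
A \ar[d,"u'"swap] \ar[r,"v"] & C \ar[d,"u"] \\
C' \ar[r,"v'"swap] & B
\end{tikzcd}
\end{displaymath}
of $\cat B$ with $u,u'$ acyclic cofibrations and $v,v'$ acyclic fibrations, and $Z \in \fiber{\cat E}C$, the key identity to establish is
\begin{displaymath}
\pull{u'}(\mu_Z) \circ \eta^{u'}_{\pull v Z} \;\simeq\; \pull v(\eta^u_Z)
\end{displaymath}
modulo the pseudofunctorial isomorphism $\pull{u'}\pull{v'} \simeq \pull v \pull u$ induced by $v'u' = uv$. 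This is a standard manifestation of the fact that mates transpose to whiskered units under the adjunctions $(\push{u'},\pull{u'})$ and $(\push u,\pull u)$; it can be checked by unfolding the universal properties of the cartesian and cocartesian morphisms entering the definition of $\mu_Z$, together with the characterization of $\eta^u_Z$ as the unique fiber map factoring $\cocart u Z$ through $\cart u {\push u Z}$. Once the identity is in hand, $\eta^{u'}_{\pull v Z}$ and $\eta^u_Z$ are weak equivalences by~\ref{hyp:rs-unit}, and so is $\pull v(\eta^u_Z)$ by homotopical conservativity of $\pull v$ (granted by~\ref{hyp:rs-hcons}); the 2-out-of-3 property then forces $\pull{u'}(\mu_Z)$ to be a weak equivalence, and one last appeal to homotopical conservativity of $\pull{u'}$ shows that $\mu_Z$ itself is one.

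With~\eqref{hyp:weak-conservative} and~\eqref{hyp:hBC} in place, theorem~\ref{thm:main} equips $\cat E$ with a model structure of cofibrations $\totalcof{\cat E}$, fibrations $\totalfib{\cat E}$, and weak equivalences $\totalweak{\cat E}$. To match this last class with the Roig-Stanculescu one, note that any $f \in \totalweak{\cat E}$ automatically has $p(f) \in \class W$, and that for any $f$ with $p(f) = w \in \class W$ and any factorization $w = qj$ with $j \in \class C \cap \class W$ and $q \in \class F \cap \class W$, transposing the middle factor of $f = \cart q Y \circ \middlefact f j q \circ \cocart j X$ under $(\push j,\pull j)$ yields
\begin{displaymath}
\pullfact f \;\simeq\; \pull j(\middlefact f j q) \circ \eta^j_X
\end{displaymath}
in $\fiber{\cat E}{pX}$. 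Since $\eta^j_X$ is a weak equivalence by~\ref{hyp:rs-unit} and $\pull j$ is homotopically conservative by~\ref{hyp:rs-hcons}, the 2-out-of-3 property shows that $\pullfact f$ is a weak equivalence if and only if $\middlefact f j q$ is. Combined with proposition~\ref{prop:exists-forall}, which asserts that the latter condition does not depend on the chosen factorization of $w$, this identifies $\totalweak{\cat E}$ with the Roig-Stanculescu class and completes the proof.
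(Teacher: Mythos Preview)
The paper does not give its own proof of this theorem: it is stated as a result due to Roig and Stanculescu and the paper simply remarks that ``their conditions and constructions are special cases of ours'' and that this ``follows from the equivalence established in theorem~\ref{thm:main}.'' Your proposal is precisely the detailed execution of that claim, and it is correct. The derivation of~\eqref{hyp:weak-conservative} from~\ref{hyp:rs-hcons} and~\ref{hyp:rs-unit} via the unit naturality square is sound; the mate identity $\pull{u'}(\mu_Z)\circ\eta^{u'}_{\pull v Z}\simeq\pull v(\eta^u_Z)$ holds (it is the standard fact that the $(\push{u'},\pull{u'})$-transpose of the mate is $\pull v$ of the unit of $(\push u,\pull u)$, modulo the pseudofunctoriality iso $\pull{u'}\pull{v'}\simeq\pull v\pull u$), and your deduction of~\eqref{hyp:hBC} from it is clean; finally, the identification $\pullfact f\simeq\pull j(\middlefact f j q)\circ\eta^j_X$ together with proposition~\ref{prop:exists-forall} gives exactly the required bijection between $\totalweak{\cat E}$ and the Roig--Stanculescu class of weak equivalences. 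There is nothing to add.
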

The formulation of the theorem is not symmetric, since it emphasizes
the cartesian morphisms over the cocartesian ones in the definition of
weak equivalences. This lack of symmetry in the definition of the weak
equivalences has the unfortunate effect of giving a similar bias to the
sufficient conditions: in order to obtain the weak factorization
systems, cocartesian morphisms above acyclic cofibrations should be
acyclic, which is the meaning of this apparently weird condition
\ref{hyp:rs-unit}; at the same time, cartesian morphisms above acyclic
fibrations should also be acyclic but this is vacuously true with the
definition of weak equivalences in theorem \ref{thm:rs}. Condition
\ref{hyp:rs-hcons} is only here for the 2-out-of-3 property, which
boils down to it.

\begin{theorem}[Harpaz, Prasma]
  Let $p \from \cat E \to \cat B$ be a Grothendieck
  bifibration. Suppose that $\cat B$ is a model category with
  structure $(\totalcof\null,\totalweak\null,\totalfib\null)$ and that
  each fiber $\fiber {\cat E} A$ also with structure
  $(\totalcof A,\totalweak A,\totalfib A)$. Suppose also assumption
  \eqref{hyp:quillen-push-pull}. Then $\cat E$ is a model category
  with
  \begin{itemize}
  \item cofibrations the total ones,
  \item weak equivalences those $f \from X \to Y$ such that
    $u = p(f) \in \totalweak\null$ and
    $\pull u(r) \circ \pullfact f \in \totalweak{pX}$, where
    $r \from Y \to Y^{\rm fib}$ is a fibrant replacement of $Y$ in
    $\fiber{\cat E}{pY}$,
  \item fibrations the total ones,
  \end{itemize}
  provided that
  \begin{enumerate}[label=(\roman*')]
  \item\label{hyp:hp-qequiv} the adjoint pair $(\push u,\pull u)$ is a
    Quillen equivalence for all $u \in \totalweak\null$,
  \item\label{hyp:hp-hcons} $\push u$ and $\pull v$ preserves weak equivalences whenever
    $u$ is an acyclic cofibration and $v$ an acyclic fibration.
  \end{enumerate}  
\end{theorem}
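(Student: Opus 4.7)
\emph{Proof plan.} We deduce the theorem from the main theorem \ref{thm:main}, by (A) verifying that hypotheses (i') and (ii') imply the conditions \eqref{hyp:weak-conservative} and \eqref{hyp:hBC}, and (B) identifying the class $\totalweak{\cat E}$ produced by theorem \ref{thm:main} with the class of maps described in the statement.

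For \eqref{hyp:weak-conservative}, let $u$ be an acyclic cofibration in $\cat B$: by (i') the adjunction $(\push u, \pull u)$ is a Quillen equivalence, so $\lderiv\push u$ is an equivalence of homotopy categories and hence conservative; by (ii') the functor $\push u$ preserves all weak equivalences, so it coincides with $\lderiv\push u$ at the level of homotopy categories. Consequently $\push u$ reflects weak equivalences, and the dual argument handles $\pull v$ for $v$ an acyclic fibration. For \eqref{hyp:hBC}, consider a square $uv = v'u'$ with $u, u'$ acyclic cofibrations and $v, v'$ acyclic fibrations. The mate $\mu \from \push{u'} \pull v \Rightarrow \pull{v'} \push u$ is by construction obtained from the pseudofunctoriality isomorphism $\push u \push v \simeq \push{v'} \push{u'}$. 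Assumption (ii') ensures that the four functors $\push{u'}, \pull v, \pull{v'}, \push u$ all preserve weak equivalences, hence descend unchanged to the homotopy category; assumption (i') turns the four associated derived adjunctions into equivalences of homotopy categories. Since the mate of a natural isomorphism along two adjunctions that are equivalences is itself an isomorphism --- a direct consequence of the units and counits being isomorphisms --- $\mu$ becomes a natural isomorphism on the homotopy categories, so $\mu_Z$ is a weak equivalence in $\fiber{\cat E}{C'}$ for every $Z$.

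Once theorem \ref{thm:main} applies, a morphism $f \from X \to Y$ lies in $\totalweak{\cat E}$ iff $p(f) = u \in \class W$ and, for any factorization $u = qj$ with $j$ an acyclic cofibration and $q$ an acyclic fibration, $\middlefact f j q$ is a weak equivalence in $\fiber{\cat E}{C}$ (by proposition \ref{prop:exists-forall}). To match this with the Harpaz--Prasma description, we use the pseudofunctorial isomorphism $\pull u \simeq \pull j \pull q$ to identify $\pull u(r) \circ \pullfact f$ with $\pull j(g) \circ \eta_X$ in $\fiber{\cat E}{pX}$, where $g = \pull q(r) \circ \middlefact f j q$ and $\eta_X$ is the unit of $(\push j, \pull j)$ at $X$. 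Since $\pull q(r)$ is a weak equivalence by (ii'), the two-out-of-three property reduces the problem to showing that $\pull j(g) \circ \eta_X$ is a weak equivalence iff $g$ is. The target $\pull q Y^{\rm fib}$ is fibrant in $\fiber{\cat E}{C}$ (as $Y^{\rm fib}$ is fibrant and $\pull q$ is right Quillen); precomposing with a cofibrant replacement $X^{\rm cof} \to X$, using that $\push j$ preserves weak equivalences (by (ii')), and invoking the defining adjointness property of the Quillen equivalence $(\push j, \pull j)$ between cofibrant source and fibrant target yields the desired equivalence. The main obstacle is \eqref{hyp:hBC}: it requires both the observation that under (ii') the four functors constituting $\mu$ coincide with their derived versions, and the general categorical fact about mates of isomorphisms along equivalences --- a step conceptually straightforward but needing some care in the absence of an a priori model structure on $\cat E$.
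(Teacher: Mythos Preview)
The paper does not give a proof of this theorem: it is quoted as a result of Harpaz--Prasma and the text merely asserts that it is a special case of theorem~\ref{thm:main}. Your proposal therefore goes further than the paper by actually attempting the derivation, and the overall strategy --- check \eqref{hyp:weak-conservative} and \eqref{hyp:hBC} from (i')(ii'), then identify $\totalweak{\cat E}$ with the Harpaz--Prasma class --- is the right one. The verification of \eqref{hyp:weak-conservative} and part~(B) are fine as written.

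There is, however, a genuine gap in your argument for \eqref{hyp:hBC}. You claim that $\Ho\mu$ is the mate of the isomorphism $\Ho(\push u\push v\simeq\push{v'}\push{u'})$ along the \emph{derived} adjunctions, and then invoke that mates along equivalences preserve isomorphisms. But the mate $\mu$ is built from the unit of $(\push{v'},\pull{v'})$ and the counit of $(\push v,\pull v)$, and hypothesis~(ii') does \emph{not} say that $\push v$ or $\push{v'}$ preserve weak equivalences (only $\push u$ for $u$ an acyclic cofibration and $\pull v$ for $v$ an acyclic fibration). Hence the derived units and counits of these adjunctions need not coincide with the images of the ordinary ones, and the identification of $\Ho\mu$ with the derived mate is not automatic.

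A repair is available, still using only (i') and (ii'). Given $Z$, first replace it fibrantly (harmless by naturality of $\mu$ and (ii')); then take a cofibrant replacement $W\to\pull v Z$ and postcompose $\mu_Z$ with a fibrant replacement $\push u Z\to(\push u Z)^{\mathrm f}$ through $\pull{v'}$ (again harmless by (ii')). One is reduced to showing that $\push{u'}W\to\pull{v'}(\push u Z)^{\mathrm f}$ is a weak equivalence. Now $\push{u'}W$ is cofibrant and $(\push u Z)^{\mathrm f}$ is fibrant, so by the Quillen equivalence $(\push{v'},\pull{v'})$ this holds iff the transpose $\push{v'}\push{u'}W\simeq\push u\push v W\to(\push u Z)^{\mathrm f}$ is a weak equivalence. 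But $\push v W\to Z$ is a weak equivalence by the Quillen equivalence $(\push v,\pull v)$ applied to the cofibrant--fibrant pair $(W,Z)$, and $\push u$ preserves it by (ii'). This closes the gap; you should replace the mate-at-homotopy-level paragraph by this computation.
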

At first glance, Harpaz and Prasma introduces the same asymmetry that
Roig and Stanculescu in the definition of weak equivalences. They show
however that, under condition \ref{hyp:hp-qequiv}, weak equivalences
can be equivalently described as those $f \from X \to Y$ such
that $u = p(f) \in\totalweak\null$ and
\begin{displaymath}
  \push u X^{\rm cof} \to \push u X \to Y \in \totalweak{pY}
\end{displaymath}
where the first arrow is the image by $\push u$ of a cofibrant
replacement $X^{\rm cof} \to X$.  Hence, they manage to adapt
Roig-Stanculescu's result and to make it self dual. There is a cost
however, namely condition \ref{hyp:hp-qequiv}. Informally, it says
that weakly equivalent objects of $\cat B$ should have fibers with the
same homotopy theory. Harpaz and Prasma observe moreover that under
\ref{hyp:hp-qequiv}, \ref{hyp:rs-hcons} and \ref{hyp:rs-unit} implies
\ref{hyp:hp-hcons}. The condition is quite strong: in particular for
the simple Grothendieck bifibration
$\cod \from \functorcat{\lincat 2}{\cat B} \to \cat B$ of example
\ref{ex:main-thm-motiv}, it is equivalent to the fact that the model
category $\cat B$ is right proper. This explains why condition
\ref{hyp:hp-qequiv} has to be weakened in order to recover the Reedy
construction, as we do in this paper.

\medskip

It is possible to understand our work as a reflection on these
results, in the following way.
A common pattern in the train of thoughts developped in the three
papers
\cite{roig:model-bifibred,stanculescu:bifib-model,harpaz-prasma:grothendieck-model}
is their strong focus on cartesian and cocartesian morphisms above
\emph{weak equivalences}.  Looking at what it takes to construct weak
factorization systems using Stancuslescu's lemma (cf.\ lemma
\ref{lem:stan-lemma}), it is quite unavoidable to {\em push} along
(acyclic) cofibrations and {\em pull} along (acyclic) fibrations in
order to put everything in a common fiber, and then to use the
fiberwise model structure.  On the other hand, \emph{nothing} compels
us apparently to push or to pull along weak equivalences of $\cat B$
in order to define a model structure on $\cat E$.  This is precisely
the Ariadne's thread which we followed in the paper: organize
everything so that cocartesian morphisms above (acyclic) cofibrations
are (acyclic) cofibrations, and cartesian morphisms above (acyclic)
fibrations are (acyclic) fibrations.
This line of thought requires in particular to see every weak
equivalences of the basis category $\Bcategory$ as the
\emph{composite} of an acyclic cofibration followed by an acyclic
fibration.
One hidden source of inspiration for this divide comes from the
dualities of proof theory, and the intuition that pushing along an
(acyclic) cofibration should be seen as a \emph{positive operation}
(or a constructor) while pulling along an (acyclic) fibration should
be seen as a \emph{negative operation} (or a deconstructor),
see~\cite{mellies-zeilberger-lics-2016,mellies-zeilberger-mscs-2017}
for details.
All the rest, and in particular hypothesis
\eqref{hyp:weak-conservative} and \eqref{hyp:hBC}, follows from that
perspective, together with the idea of applying the framework to
reunderstand the Reedy construction from a bifibrational point of
view.

\medskip

Let us finally mention that we are currently preparing a companion
paper~\cite{cagne-mellies-companion-2017} where we carefully analyze
the relationship between the functor
$\Ho p: \Ho{\cat E} \to \Ho{\cat B}$ between the homotopy categories
$\Ho{\cat E}$ and $\Ho{\cat B}$ obtained from a Quillen bifibration
$p:\cat E\to\cat B$ by Quillen localisation, and the Grothendieck
bifibration $q:\cat F\to\cat B$ obtained by localising each fiber
$\fiber{\cat E}A$ of the Quillen bifibration~$p$ independently as
$\fiber{\cat F} A = \Ho{\fiber{\cat E}A}$.

\addcontentsline{toc}{section}{References}
\bibliographystyle{alpha}
\bibliography{main}

\end{document}